\theoremstyle{plain}
\newtheorem{lemma}{Lemma}[section]
\newtheorem*{theorem*}{Theorem}
\newtheorem*{lemma*}{Lemma}
\newtheorem*{proposition*}{Proposition}
\newtheorem*{conjecture*}{Conjecture}
\newtheorem*{corollary*}{Corollary}
\newtheorem*{problem*}{Problem}
\newtheorem{theorem}[lemma]{Theorem}
\newtheorem{corollary}[lemma]{Corollary}
\newtheorem{proposition}[lemma]{Proposition}
\newtheorem{problem}[lemma]{Problem}
\theoremstyle{definition}
\newtheorem{definition}[lemma]{Definition}
\newtheorem{example}[lemma]{Example}
\newtheorem{remark}[lemma]{Remark}
\newcommand{\fto}[1]{\stackrel{#1}{\to}}
\newcommand{\Z}{\mathbb{Z}}
\newcommand{\C}{\mathbb{C}}
\newcommand{\Q}{\mathbb{Q}}
\newcommand{\R}{\mathbb{R}}
\newcommand{\OO}{\mathcal{O}}
\newcommand{\te}{\otimes}
\newcommand{\opt}{\mathrm{opt}}
\newcommand{\cI}{\mathcal{I}}
\newcommand{\cF}{\mathcal F}
\newcommand{\cQ}{\mathcal Q}
\newcommand{\cA}{\mathcal A}
\newcommand{\cM}{\mathcal M}
\newcommand{\cP}{\mathcal P}
\renewcommand{\cD}{\mathcal{D}}
\newcommand{\rH}{\mbox{H}}
\newcommand{\cZ}{\mathcal{Z}}
\renewcommand{\P}{\mathbb{P}}
\newcommand{\PP}{\mathbb{P}}
\DeclareMathOperator{\ch}{ch}
\DeclareMathOperator{\Aut}{Aut}
\DeclareMathOperator{\Hom}{Hom}
\DeclareMathOperator{\Pic}{Pic}
\DeclareMathOperator{\rk}{rk}
\DeclareMathOperator{\Ext}{Ext}
\DeclareMathOperator{\Supp}{Supp}
\DeclareMathOperator{\coh}{coh}
\begin{document}

\date{\today}
\author[I. Coskun]{Izzet Coskun}
\author[J. Huizenga]{Jack Huizenga}
\address{Department of Mathematics, Statistics and CS \\University of Illinois at Chicago, Chicago, IL 60607}
\email{coskun@math.uic.edu}
\email{huizenga@math.uic.edu}
\thanks{During the preparation of this article the first author was partially supported by the NSF CAREER grant DMS-0950951535, and an Alfred P. Sloan Foundation Fellowship and the second author was partially supported by a National Science Foundation Mathematical Sciences Postdoctoral Research Fellowship}
\subjclass[2010]{Primary: 14C05. Secondary: 14E30, 14J60, 13D02}

\title[Interpolation for monomial schemes]{Interpolation, Bridgeland stability and monomial schemes in the plane}

\begin{abstract}
Given a zero-dimensional scheme $Z$, the higher-rank interpolation problem asks for the classification of slopes  $\mu$ such that there exists a vector bundle $E$ of slope $\mu$ satisfying $H^i(E \otimes \cI_Z)=0$ for all $i$. In this paper, we solve this problem for all zero-dimensional monomial schemes in $\PP^2$. As a corollary, we obtain detailed information on the stable base loci of $\Theta$-divisors on the Hilbert scheme of points on $\PP^2$.  We prove the correspondence between walls in the Bridgeland stability manifold and walls in the Mori chamber decomposition of the effective cone conjectured in \cite{ABCH} for monomial schemes.  We determine the Harder-Narasimhan filtration of ideal sheaves of monomial schemes for suitable Bridgeland stability conditions and, as a consequence, obtain a new resolution better suited for cohomology computations than other standard resolutions such as the minimal free resolution. 
\end{abstract}

\maketitle

\newcommand{\spacing}[1]{\renewcommand{\baselinestretch}{#1}\large\normalsize}
 \setcounter{tocdepth}{1}
\tableofcontents

\section{Introduction}
Interpolation problems form the technical core of many central questions in algebraic geometry, ranging from the Nagata-Harbourne-Hirschowitz conjecture to the construction of theta divisors on moduli spaces. In this paper, we solve the higher-rank interpolation problem for all zero-dimensional monomial schemes in $\PP^2$. Before we state our main theorem on interpolation, we introduce some terminology. 

\begin{definition}
A vector bundle $E$ on $\P^2$ \emph{satisfies interpolation} for a zero-dimensional scheme $Z\subset \P^2$ if $E\te \cI_Z$ is acyclic, so that $H^i(E\te \cI_Z)=0$ for all $i$.
\end{definition}

\begin{problem}[The higher-rank interpolation problem for zero-dimensional schemes]
Fix a zero-dimensional scheme $Z\subset \P^2$.  Determine the rational numbers $\mu$ for which there exists a vector bundle of slope $\mu$ that satisfies interpolation for $Z$.
\end{problem}

If $E$ satisfies interpolation for $Z$, then $\chi(E\te \cI_Z) = 0$.  By the Riemann-Roch formula, the discriminant $\Delta(E)$ of such a vector bundle is determined by its slope.  The interpolation problem is, therefore, equivalent to classifying all pairs of rational numbers $(\mu,\Delta)$ which are the invariants of a vector bundle with interpolation for $Z$.

Let $Z$ be a zero-dimensional monomial scheme. The block diagram $D$ associated to $Z$ records the monomials in $\C[x,y]/\cI_Z$ (see \S \ref{sec-prelim} for basic properties of monomial schemes). We index the rows from {\em bottom to top} and the columns from {\em left to right}. Let $h_j$ (respectively, $v_j$) denote the number of boxes in the $j$th row (respectively, column). Let $r(D)$ and $c(D)$ denote the number of rows and columns in $D$.  The {\em $k$th horizontal slope} $\mu_k$ and the {\em $i$th vertical slope} $\mu_i'$ are defined by 
$$\mu_k = \frac{1}{k} \sum_{j=1}^k (h_j + j-1) - 1, \ \  \ \ \mu_i'= \frac{1}{i} \sum_{j=1}^i (v_j + j-1) -1.$$ 
Let the {\em slope} $\mu(Z)$ of a zero-dimensional  monomial scheme with  block diagram $D$ be defined by 
$$\mu(Z) = \max_{{{1 \leq k \leq r(D)} \atop {1 \leq i \leq c(D)}}} \{\mu_k, \mu_i'\}.$$  

\begin{example}
The first diagram in Figure \ref{figure-slope} is the block diagram associated to the scheme defined by the monomials $x^7, x^6y, x^2y^3, xy^4, y^5$. The next two diagrams in Figure \ref{figure-slope} compute all the horizontal and vertical slopes.  For example, to compute horizontal slopes, place $h_j+j-1$ checkers in row $j$ for each $j$.  Then the $k$th horizontal slope $\mu_k$ is one less than the average number of checkers per row in the first $k$ rows.  In this example, the maximum slope $\mu(Z)$ is given by the horizontal slope $\mu_3= \frac{19}{3}$. 

\begin{figure}[htbp]
\begin{center}
\input{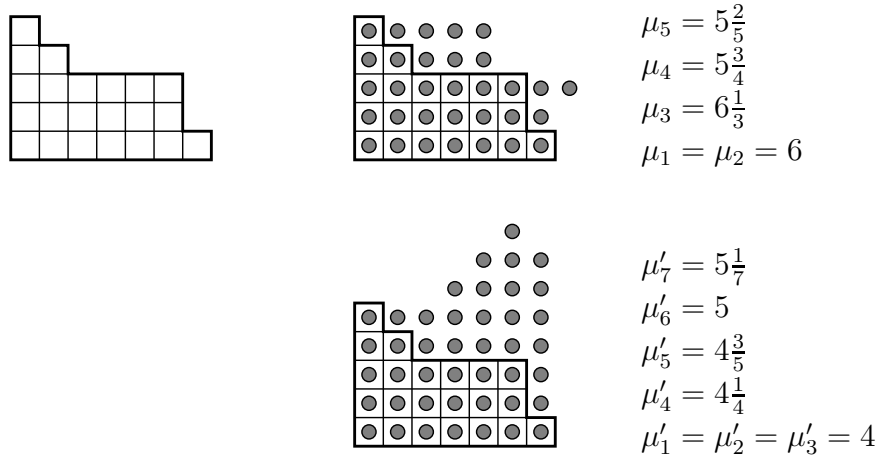}
\end{center}
\caption{Calculating the destabilizing object for a monomial scheme.}
\label{figure-slope}
\end{figure}
\end{example}

Our main theorem solves the higher-rank interpolation problem for monomial schemes.

\begin{theorem}\label{thm-intro-interpolate}
Let $Z$ be a zero-dimensional monomial scheme. There exists a vector bundle $E$ with slope $\mu \in \Q$ satisfying interpolation for $Z$ if and only if $\mu \geq \mu(Z)$.
\end{theorem}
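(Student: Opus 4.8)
The plan is to split the statement into an upper bound (existence) and a lower bound (obstruction), treating the two inequalities separately. For the existence direction, the idea is to realize the extremal slope $\mu(Z)$ as the slope of an actual acyclic twist of $\cI_Z$, and then propagate upward. The key structural input should be a good resolution of $\cI_Z$ adapted to the block diagram $D$: for a monomial scheme, the rows of $D$ give a filtration of $\OO_Z$ whose graded pieces are structure sheaves of ``fat points on lines,'' i.e. schemes of the form $\OO_{\PP^1}$-type quotients supported on a horizontal line, and dually the columns give a filtration with pieces supported on vertical lines. From such a filtration I expect to build, for each $k$, a two-term complex of sums of line bundles whose cohomology sheaf is $\cI_Z$ and whose general twist by an appropriate bundle is acyclic precisely when the slope is $\geq \mu_k$; the same on the vertical side. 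Taking the bundle witnessing $\mu = \mu(Z)$ and then tensoring with a general semistable bundle of large slope (or forming extensions/elementary modifications) upgrades acyclicity from $\mu(Z)$ to every rational $\mu \geq \mu(Z)$, since raising the slope only helps $h^0$ vanish while preserving $h^2=0$ by duality and controlling $h^1$ via the vanishing of $\chi$ together with genericity. This last ``propagation'' step is essentially the standard fact (which I would isolate as a lemma) that if some bundle satisfies interpolation at slope $\mu_0$, and $\chi(E\te\cI_Z)=0$ forces $\Delta$, then one can satisfy interpolation at every larger slope.

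For the lower bound, I must show no bundle of slope $\mu < \mu(Z)$ can be acyclic for $\cI_Z$. Suppose $E$ has slope $\mu$ and satisfies interpolation; without loss of generality $\mu = \mu_k$ is the ``offending'' horizontal slope (the vertical case is symmetric under transposing $D$). The natural device is a cohomology-vanishing obstruction coming from restriction to the first $k$ rows: there is a natural surjection $\OO_Z \twoheadrightarrow \OO_{Z_k}$ where $Z_k$ is the monomial scheme cut out by the first $k$ rows, sitting on the thickened line $kL$ (the divisor $k$ times a line). Tensoring the exact sequence $0 \to \cI_Z \to \cI_{Z_k} \to \OO_{kL}(-Z_k') \to 0$ (suitably interpreted) with $E$ and taking cohomology, acyclicity of $E\te\cI_Z$ forces control on $H^\bullet(E\te\cI_{Z_k})$ and on $H^\bullet$ of the restriction of $E$ to $kL$. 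The quantity $\frac{1}{k}\sum_{j=1}^k(h_j + j - 1) - 1$ is exactly the threshold slope for a bundle on $\PP^2$, restricted to a degree-$k$ scheme supported on a line, to be able to kill the cohomology of the twisted ideal of $k$ collinear clusters of sizes $h_1,\dots,h_k$; if $\mu < \mu_k$ then on the line $\PP^1$ the restricted bundle has too-negative slope to have $H^0$ of the relevant twist vanish while $H^1$ also vanishes, because a degree count on $\PP^1$ fails. Making this precise is where I expect the main obstacle to lie: one must choose the right line $L$ (so that $E|_L$ is balanced/semistable, using that the set of jumping lines is a proper subvariety), and one must correctly track the twist by the ``column heights'' $j-1$ coming from the staircase shape, so that the $\PP^1$-Riemann-Roch inequality $h^0 - h^1 = \deg + \rk$ produces a strict obstruction exactly below $\mu_k$.

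Assembling these: the existence direction gives $\{\mu : \text{interpolation holds}\} \supseteq [\mu(Z), \infty) \cap \Q$, and the obstruction gives the reverse inclusion, so the set is exactly $\{\mu \in \Q : \mu \geq \mu(Z)\}$. I would organize the write-up by first proving the per-row and per-column obstruction lemmas on $\PP^1$ (pure Riemann-Roch on a thickened line), then the filtration/resolution lemma for monomial schemes, then the propagation lemma, and finally combine them. The genuinely delicate points, to flag in advance, are (i) the bookkeeping identifying $\mu_k$ with the $\PP^1$ threshold through the staircase twists, and (ii) ensuring the bundle constructed at slope exactly $\mu(Z)$ is genuinely a vector bundle (locally free) rather than merely a torsion-free sheaf, which may require a general-position/ elementary-modification argument or an appeal to a Bridgeland-stability description of the Harder--Narasimhan factors as in the later sections of the paper.
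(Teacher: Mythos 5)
The existence half of your proposal hinges on a construction that does not exist: ``for each $k$, a two-term complex of sums of line bundles whose cohomology sheaf is $\cI_Z$ and whose general twist by an appropriate bundle is acyclic precisely when the slope is $\geq \mu_k$.'' If $0\to F_1\to F_0\to \cI_Z\to 0$ is any resolution with $F_0,F_1$ direct sums of line bundles, then deducing acyclicity of $E\te \cI_Z$ from the long exact sequence requires $\chi(E(-d))=0$ for every twist $d$ occurring in $F_0\oplus F_1$; once $\Delta(E)$ is pinned down by $\chi(E\te\cI_Z)=0$, the equation $\chi(E(-d))=0$ is quadratic in $d$ and has at most two solutions, while a monomial ideal with several generators has many distinct twists in any such resolution. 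The same incompatibility kills the row-by-row filtration of $\OO_Z$: the $j$th graded piece is a length-$h_j$ scheme on a line twisted by $-(j-1)$, imposing $\mu=h_j+j-2$, and these conditions differ with $j$; the threshold $\mu_k$ is the \emph{average} of these numbers, which cannot be attained by demanding acyclicity of each piece separately. This is precisely the obstacle the paper is built to circumvent (see the discussion of minimal free resolutions in the introduction): the correct two-step decomposition is the Bridgeland destabilizing sequence $0\to\cI_{W_k}(-k)\to\cI_Z\to\cI_{Z_k\subset kL}\to 0$, whose pieces are not sums of line bundles but semistable monomial objects of ranks $1$ and $0$ that must themselves be decomposed recursively; the nesting of Bridgeland walls proved in \S\ref{sec-nesting} is exactly the statement that the numerical conditions for acyclicity of the pieces are implied by $\mu\geq\mu(Z)$, and this property is destroyed if one flattens the recursion into a single complex of line bundles. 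Your propagation step is sound --- it is Theorem~\ref{bumpingUpThm} --- but without a working construction at the extremal slope there is nothing to propagate.

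The obstruction half is closer in spirit to the paper but also leaves the key step unaddressed. The correct degree count lives on the fat line, not the reduced one: $\chi(E\te\cI_{Z_k\subset kL})=r(E)\,k\,(\mu-\mu_k)<0$ when $\mu<\mu_k$, so $H^1(E\te\cI_{Z_k\subset kL})\neq 0$ since the sheaf is one-dimensional. To convert this into non-acyclicity of $E\te\cI_Z$ via the sequence above, you must show $H^2(E\te\cI_{W_k}(-k))=H^2(E(-k))=0$, which by Serre duality requires $\mu(E)>k-3$ and semistability of $E$ (hence the reduction of Theorem~\ref{semistableThm} and Corollary~\ref{assumeStableCor}, plus a separate treatment of small slopes); none of this appears in your sketch, and it is where the work lies. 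The paper instead proves optimality in the rank-one case by producing a complete curve of extensions of $\cI_{Z_k\subset kL}$ by $\cI_{W_k}(-k)$ through $Z$ in $\PP^{2[n]}$ on which the divisor $D_E$ has degree zero, and in the rank-zero case by an Euler-characteristic argument of the type you are gesturing at; either route could in principle replace your line-restriction argument, but as written your version is incomplete.
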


More precisely, let $\zeta$ be the Chern character of a bundle $E$ satisfying interpolation for $Z$.  Then there are prioritary bundles with  Chern character $\zeta$,  and the general such bundle has interpolation for $Z$.  Furthermore, if there exist stable bundles with invariants $\zeta$, then the general such bundle also has interpolation for $Z$.

Theorem \ref{thm-intro-interpolate} has consequences for interpolation with respect to arbitrary zero-dimensional schemes.  By \cite[Theorem 15.17]{Eisenbud2}, there is a flat one-parameter torus action that specializes a homogeneous ideal of the polynomial ring to its generic initial ideal. Let $Y$ be an arbitrary zero-dimensional scheme and let $Z$ be the monomial scheme defined by the generic initial ideal of $Y$. Hence, we have a flat family $\lambda \cdot Y$ parameterized by $\C^*$ specializing to $Z$. Let $E$ be a vector bundle having interpolation for $Z$. Since being acyclic is an open condition in flat families, we conclude that $H^i(E \otimes \cI_{\lambda \cdot Y})=0$ for all $i$ and general $\lambda \in \C^*$. Consequently, $H^i( \lambda^* E \otimes \cI_Y) = 0 $ for all $i$, and we obtain the following corollary.

\begin{corollary}
Let $Y$ be a zero-dimensional subscheme of $\PP^2$, and let $Z$ be the scheme defined by the generic initial ideal of $Y$.  If $\mu\geq \mu(Z)$, then there exists a vector bundle $E$ with slope $\mu$ having interpolation for $Y$.

\end{corollary}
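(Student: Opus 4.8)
The plan is to deduce this from Theorem~\ref{thm-intro-interpolate} by degenerating $Y$ to $Z$, making precise the argument sketched above. Three ingredients are needed: the flat torus degeneration of $\cI_Y$ to its generic initial ideal, the openness of acyclicity in flat families, and the $\Aut(\PP^2)$-invariance of the slope.

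First I would record that the generic initial ideal $\cI_Z$ is Borel-fixed, hence a monomial ideal, and that passing to the generic initial ideal preserves the Hilbert function; so $Z$ is a zero-dimensional monomial scheme of the same length as $Y$, and Theorem~\ref{thm-intro-interpolate} applies to it. Since $\mu \geq \mu(Z)$, I fix a vector bundle $E$ of slope $\mu$ with $E \te \cI_Z$ acyclic. Next, by \cite[Theorem~15.17]{Eisenbud2}, after a general change of coordinates there is a one-parameter subgroup $\lambda\colon \C^* \to \GL_3$ for which the family of translates $\lambda(t)\cdot Y$ extends to a flat family $\cZ \subset \PP^2 \times \mathbb{A}^1$ of zero-dimensional schemes over $\mathbb{A}^1$, with special fiber $Z$ over $t=0$ and fiber $\lambda(t)\cdot Y$ over $t \neq 0$. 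The ideal sheaf $\cI_{\cZ}$, being the kernel of the surjection $\OO_{\PP^2\times\mathbb{A}^1} \to \OO_{\cZ}$ of $\mathbb{A}^1$-flat sheaves, is itself flat over $\mathbb{A}^1$, and hence so is its twist by the pullback of $E$; the fiber of this twist over $t$ is $E\te\cI_{\lambda(t)\cdot Y}$ for $t\neq 0$ and $E \te \cI_Z$ for $t=0$. By the semicontinuity theorem each $h^i$ of the fiber is upper semicontinuous in $t$, so since all $h^i$ vanish at $t=0$ they vanish on a neighborhood of $0$, in particular at a general $t\neq 0$; thus $H^i(E\te\cI_{\lambda\cdot Y})=0$ for all $i$ and general $\lambda$.

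Finally, for such a $\lambda$ I would pull back along the automorphism $\lambda$ of $\PP^2$: the sheaf $\lambda^*E$ is again a vector bundle with $H^i(\lambda^*E \te \cI_Y) \cong H^i(E\te\cI_{\lambda\cdot Y}) = 0$ for all $i$, and since $\Aut(\PP^2)$ is connected it acts trivially on $H^*(\PP^2,\Q)$, so $\ch(\lambda^*E) = \ch(E)$ and in particular $\lambda^*E$ has slope $\mu$. Hence $\lambda^*E$ satisfies interpolation for $Y$.

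I do not expect a genuine obstacle here; the substance is Theorem~\ref{thm-intro-interpolate}. The points requiring care are the flatness of $\cI_{\cZ}$ over the base, so that the semicontinuity theorem legitimately applies, and the observation that — since acyclicity for a fixed Chern character is only an open, not a dense, condition — one obtains existence of \emph{some} bundle of slope $\mu$ with interpolation for $Y$, but not that the general bundle of that Chern character works, in contrast to the monomial case of Theorem~\ref{thm-intro-interpolate}.
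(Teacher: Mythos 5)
Your proposal is correct and follows essentially the same route as the paper: the authors likewise invoke \cite[Theorem 15.17]{Eisenbud2} to obtain the flat torus degeneration of $Y$ to $Z$, use openness of acyclicity in flat families to get $H^i(E\te\cI_{\lambda\cdot Y})=0$ for general $\lambda$, and then pull back by the automorphism $\lambda$ to conclude $H^i(\lambda^*E\te\cI_Y)=0$. Your added care about flatness of the ideal sheaf, the preservation of the Chern character under $\lambda^*$, and the distinction between ``some bundle'' versus ``the general bundle'' are all sound refinements of the paper's brief argument.
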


The main new ingredient that allows us to prove  Theorem \ref{thm-intro-interpolate} is Bridgeland stability. In general, computing the cohomology groups $H^i(E \otimes \cI_Z)$ is a difficult problem. The standard technique replaces $\cI_Z$ with a conveniently chosen quasi-isomorphic  complex of  coherent sheaves.  The main question is then to determine which quasi-isomorphic complex is the most convenient. Unfortunately, standard resolutions of $\cI_Z$ such as the minimal free resolution rarely help compute $H^i(E \otimes \cI_Z)$. For example, for a general collection of points, \cite{HuizengaPaper2} proved that the natural resolution of the ideal sheaf is in terms of exceptional bundles. The main philosophy of this paper is that Bridgeland stability determines the optimal complex for computing the cohomology groups $H^i(E \otimes \cI_Z)$.

More precisely, Bridgeland \cite{Bridgeland}, Arcara-Bertam \cite{ArcaraBertram}, and Bayer-Macr\`{i} \cite{BayerMacri} have constructed Bridgeland stability conditions for $\PP^2$ parameterized by an upper half-plane $(s,t)$, $t>0$.  The wall destabilizing $\cI_Z$ is a semi-circular wall with center along the negative real axis \cite{ABCH} (see \S \ref{sec-prelim} for details on Bridgeland stability). Let $A$ be the object destabilizing $\cI_Z$. Then we get an exact sequence $$0 \rightarrow A \rightarrow \cI_Z \rightarrow B \rightarrow 0,$$ where $A$ and $B$ are semi-stable objects with respect to the stability conditions  along the wall. 

 We conjecture that if $E$ is a minimal slope bundle satisfying interpolation for $Z$, then $E \otimes \cI_Z$ is acyclic because $E \otimes A$ and $E \otimes B$ are acyclic. Thus, the Bridgeland decomposition allows one to solve the interpolation problem for $Z$ assuming the solutions of the interpolation problems for $A$ and $B$ are known.   In this paper, we will show that for complete intersection schemes (see \S \ref{sec-ci}) and monomial schemes (see \S \ref{sec-interpolate}) this philosophy works perfectly and solves the interpolation problem.

We now describe the destabilizing object for a monomial scheme in greater detail. For concreteness, assume that the maximum defining the slope $\mu(Z)$ is achieved by $\mu_k$.  We will prove that the subobject destabilizing $\cI_Z$ is  $\cI_W (-k),$ where $W$ is the monomial scheme corresponding to the subdiagram lying above the $k$th row of the block diagram $D$. By \cite{ABCH}, the center of the Bridgeland wall corresponding to $\cI_W (-k)$ is the point $(s,0)$ with 
$$s= - \frac{\deg Z - \deg W}{k} - \frac{k}{2} = - \frac{1}{k} \sum_{j=1}^k h_j - \frac{k}{2} = - \mu(Z) - \frac{3}{2}.$$ 
We summarize this discussion in the following theorem.

\begin{theorem}\label{thm-intro-Bridgeland}
Let $Z$ be a zero-dimensional monomial scheme in $\PP^2$ of length $n$ with block diagram $D$. Swapping $x$ and $y$ if necessary, assume that $\mu_k$ is the slope giving the maximum in the definition of $\mu(Z)$. Let $W$ be the monomial scheme associated to the subdiagram of $D$ lying above the $k$th row. Then the ideal sheaf  $\cI_Z$ is destabilized by $\cI_W(-k)$ along the semi-circular wall with center $s= -\mu(Z) - \frac{3}{2}$ and radius $\sqrt{s^2-2n}$.
\end{theorem}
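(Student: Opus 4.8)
The plan is to exhibit the destabilization by an explicit short exact sequence, compute the numerical wall it determines, check that this sequence is genuinely destabilizing along that wall, and finally show that this is the largest wall for $\cI_Z$.

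\emph{The destabilizing sequence and the wall.} After interchanging $x$ and $y$ assume $\mu_k=\mu(Z)$. Multiplication by $y^k\in H^0(\OO_{\PP^2}(k))$ carries $\cI_W$ into $\cI_Z$: a monomial $x^ay^b$ lies in the monomial ideal of $W$ exactly when $(a,b)$ is not a box of the subdiagram above the $k$th row of $D$, i.e.\ exactly when $x^ay^{b+k}\in\cI_Z$, so, $\cI_W$ being monomial, $y^k\cI_W\subseteq\cI_Z$. This gives an injection $\cI_W(-k)\hookrightarrow\cI_Z$ whose cokernel $B$ is spanned by the monomials $x^ay^b\in\cI_Z$ with $b<k$; thus $y^kB=0$, and $B$ is the pushforward of the ideal sheaf of $Z\cap\{y^k=0\}$ in the $k$-fold thickened line $\{y^k=0\}$, fitting in $0\to B\to\OO_{\{y^k=0\}}\to\OO_{Z\cap\{y^k=0\}}\to 0$ and in particular a pure one-dimensional sheaf. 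From $\ch(\cI_W(-k))=(1,-k,\tfrac{k^2}{2}-\deg W)$ one computes, using the wall formula of \cite{ABCH}, that the locus $\nu_{s,t}(\cI_W(-k))=\nu_{s,t}(\cI_Z)$ is the semicircle
\[s^2+t^2+\Big(k+\tfrac{2(\deg Z-\deg W)}{k}\Big)s+2\deg Z=0,\]
that is, the semicircle with center $s_0=-\tfrac k2-\tfrac{\deg Z-\deg W}{k}$ and radius $\sqrt{s_0^2-2\deg Z}$; since $\deg Z-\deg W=\sum_{j=1}^k h_j$, the computation of $\mu_k$ recorded before the statement gives $s_0=-\mu(Z)-\tfrac32$ and $\deg Z=n$, matching the claim. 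This part is routine once the sequence is in hand.

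\emph{The sequence destabilizes $\cI_Z$ along this wall.} It remains to see that for $(s,t)$ on the semicircle both $\cI_W(-k)$ and $B$ lie in the relevant heart $\cA_s$ and are $\sigma_{s,t}$-semistable, after which they automatically have equal phase by the computation above. Because $\mu_k=\mu(Z)\ge\mu'_1=r(D)-1$ one gets $\sum_{j=1}^k h_j\ge\binom{k+1}{2}$, hence $s_0<-k$, so $\cI_W(-k)$ is a torsion-free sheaf of slope exceeding $s_0$ and lies in $\cA_s$ for $s$ near $s_0$; its $\sigma_{s,t}$-semistability along the wall follows by induction on $\deg Z$, since $W$ has a strictly smaller block diagram. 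For the pure one-dimensional sheaf $B$ one studies $\sigma_{s,t}$-semistable sheaves on the thickened line $\{y^k=0\}$ via the explicit monomial description of $B\subset\OO_{\{y^k=0\}}$, which again reduces, inductively, to smaller diagrams.

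\emph{Maximality of the wall --- the main obstacle.} One must finally show that no subobject of $\cI_Z$ produces a semicircular wall with strictly more negative center, equivalently that $\cI_Z$ is $\sigma_{s,t}$-stable for $(s,t)$ outside our semicircle; this is the step where the hypothesis $\mu_k=\max\{\mu_{k'},\mu'_{i'}\}$ is essential, and it is the hard part. A larger wall would be cut out by some $F\subset\cI_Z$ in a heart $\cA_s$, and the numerical restrictions on $F$ (nonnegativity of the discriminant of the cohomology sheaves of $F$, together with the requirement that its semicircle contain ours) should force $F$, after factoring out the twisted ideal sheaves produced by iterating the first step on $B$, to be one of the ``peeling'' twisted ideal sheaves $\cI_{W'}(-k')$ or $\cI_{W''}(-i')$ attached to a horizontal or vertical truncation of $D$, whose wall centers are exactly $-\mu_{k'}-\tfrac32$ and $-\mu'_{i'}-\tfrac32$ --- none more negative than $s_0$. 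The cleanest way I expect to make this reduction rigorous is to construct the entire Harder--Narasimhan filtration of $\cI_Z$ for $(s,t)$ just inside our semicircle by repeatedly applying the first step to $B$, so that $\cI_W(-k)$ is literally the maximal destabilizing subobject there; the outermost wall of $\cI_Z$ is then forced to be the semicircle at which the phase of $\cI_W(-k)$ meets that of $\cI_Z$.
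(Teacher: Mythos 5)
Your first step --- the sequence $0\to\cI_W(-k)\to\cI_Z\to\cI_{Z_k\subset kL}\to 0$, the identification of the quotient as a pure one-dimensional sheaf on $kL$, the computation of the center as $-\mu(Z)-\tfrac32$, and the check that the wall lies where $\cI_W(-k)$ belongs to the heart --- matches the paper (\S\ref{sec-monomialObjects}, \S\ref{rank1invs}, and Case 1 of \S\ref{ss-exactness}). The gaps are in the other two steps. For semistability of the two terms along the wall, ``follows by induction on $\deg Z$'' does not close as stated, because the induction cannot stay inside the world of ideal sheaves and sheaves on thickened lines: the quotient $\cI_{Z_k\subset kL}$ has degree no smaller than $n-\deg W$ (and equal to $n$ when $k=r(D)$), and its own destabilizing quotient is a two-term \emph{complex} $\OO_{\P^2}(-k)\oplus\OO_{\P^2}(-i)\to\cI_{Z'_i}$ of rank $-1$. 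The paper is therefore forced to run a simultaneous induction over three classes of objects (rank $1$, $0$, $-1$), to prove Gieseker semistability of each (\S\ref{sec-Gieseker}; for the quotient this requires first proving that $Z_k$ is \emph{horizontally pure}, Proposition \ref{rank1quotientPure}), and to prove the nesting inequalities $\mu_\opt(\cI_{W_k})+k\le\mu_\opt(\cI_Z)$ and $\Delta_\opt(\cI_{Z_k\subset kL})\le\Delta_\opt(\cI_Z)$ of Lemmas \ref{rankOneNest1} and \ref{rankOneNest2}. Those inequalities are exactly where the maximality of $\mu_k$ and the averaging definition of the slopes enter, and none of them appear in your sketch.

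For maximality you have left the decisive step at ``should force'' and ``I expect,'' and the primary route you propose --- classifying all subobjects $F\subset\cI_Z$ in the heart by numerical constraints --- is not how the paper proceeds and is not needed. The paper's mechanism is: $\cI_Z$ is a Gieseker stable sheaf, so its $\cZ_{s,t}$-semistable locus is the closed exterior of a single semicircle; the sequence exhibits $\cI_Z$ on the candidate wall as an extension of equal-phase semistable objects (hence semistable there) and destabilizes it strictly inside; these two facts alone force the destabilizing wall to be exactly $W(\cI_W(-k),\cI_Z)$, with no classification of subobjects. Your secondary suggestion (building the Harder--Narasimhan filtration just inside the wall) becomes essentially equivalent to this once the second step is actually complete, but even then you must invoke the nested-wall structure for Gieseker semistable objects to propagate semistability from the wall to its entire exterior; without that input, knowing the behavior of $\cI_Z$ on and just inside one semicircle says nothing about larger semicircles.
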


Let $\PP^{2[n]}$ denote the Hilbert scheme parameterizing zero-dimensional schemes of length $n$ in the projective plane $\PP^2$. The main application of our theorems is to the study of stable base loci of $\Theta$-divisors on $\PP^{2[n]}$.   Recall that $\PP^{2[n]}$ is a smooth, irreducible, projective variety of dimension $2n$ containing the locus of  distinct collections of $n$ points as a dense open subset \cite{Fogarty1}. The Hilbert scheme admits a natural morphism called the {\em Hilbert-Chow morphism} $h: \PP^{2[n]} \rightarrow \PP^{2(n)}$ to the symmetric product $\PP^{2(n)} = (\PP^2)^n / \mathfrak{S}_n$, sending a  zero-dimensional scheme of length $n$ to its support weighted with multiplicity. The map $h$ is a crepant resolution of singularities \cite{Fogarty1}. Furthermore, $\PP^{2[n]}$ is a log Fano variety, hence a Mori dream space \cite[Theorem 2.5]{ABCH}. Let $B$ be the exceptional divisor of $h$ parameterizing non-reduced schemes. Let $H = h^* \OO_{\PP^{2(n)}}(1)$ be the pullback of the ample generator from the symmetric product. Geometrically, $H$ is the class of the divisor of schemes whose support intersects a fixed line.  Fogarty \cite{Fogarty2} proves that $$\Pic (\PP^{2[n]}) \cong \Z H \oplus \Z \frac{B}{2}.$$ Hence, we can express the class of every $\Q$-divisor on $\PP^{2[n]}$ as a linear combination of $H$ and $B$. 

To understand the stable base locus decomposition of the cone of effective divisors, one naturally considers $\Theta$-divisors.  Let $E$ be a vector bundle which satisfies interpolation for some scheme $Z'$ of length $n$.  Then there is an effective divisor $$D_E := \{Z\in \P^{2[n]}:H^0(E\te \cI_Z)\neq 0\},$$ and $Z'$ is not in the base locus of $D_E$.  By a simple Grothendieck-Riemann-Roch calculation (see \cite{ABCH}) $$[D_{E}] = c_1(E) H - \frac{r(E)}{2} B,$$ which is a multiple of the class $$\mu(E)H - \frac{1}{2}B.$$ As a corollary of Theorem \ref{thm-intro-interpolate}, we determine when a monomial scheme is in the stable base locus of a linear system on $\PP^{2[n]}$.

\begin{corollary}\label{intro-corollary-sbl}
Let $Z$ be a zero-dimensional monomial scheme. Then $Z$ is in the stable base locus of the linear system $|\mu H - \frac{B}{2}|$ if and only if $\mu < \mu(Z)$. 
\end{corollary}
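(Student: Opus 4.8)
The plan is to deduce Corollary~\ref{intro-corollary-sbl} from Theorem~\ref{thm-intro-interpolate} by relating the membership of $Z$ in the stable base locus of $|\mu H - \tfrac{B}{2}|$ to the existence of effective $\Theta$-divisors $D_E$ avoiding $Z$. There are two directions to establish.

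For the ``if'' direction (membership when $\mu < \mu(Z)$): I would argue that \emph{every} effective divisor in the class $\mu H - \tfrac{B}{2}$, and hence every effective divisor in the cone $\mathbb{Q}_{\ge 0}(\mu H - \tfrac B2)$, must contain $Z$. The key input is Theorem~\ref{thm-intro-interpolate}, which says that no vector bundle of slope $<\mu(Z)$ satisfies interpolation for $Z$; combined with the fact that $D_E$ is the vanishing locus of a section of a determinantal (Brill--Noether) construction on $\P^{2[n]}$, one shows that an effective divisor of class a positive multiple of $\mu H - \tfrac B2$ not containing $Z$ would produce, via the standard $\Theta$-divisor machinery of \cite{ABCH}, a sheaf (ultimately a vector bundle, after the prioritary/general-position reductions already invoked in the paper) of slope $\mu < \mu(Z)$ with $H^0(E\otimes\cI_Z)=0$ and the right Euler characteristic, hence $H^i(E\otimes\cI_Z)=0$ for all $i$ — contradicting Theorem~\ref{thm-intro-interpolate}. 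The main technical point here is to check that ``$Z\notin D_E$ for some effective $D_E$ of the right class'' really can be upgraded to ``some \emph{vector bundle} of that slope has interpolation for $Z$'': one must pass from a possibly non-locally-free sheaf, or from an effective divisor not a priori of the form $D_E$, to an honest bundle with interpolation. This is exactly where the paragraph after Theorem~\ref{thm-intro-interpolate} (existence of prioritary bundles with the given Chern character, the general one having interpolation) does the work, together with the semicontinuity/openness of acyclicity in families.

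For the ``only if'' direction (non-membership when $\mu \ge \mu(Z)$): by Theorem~\ref{thm-intro-interpolate} there is a vector bundle $E$ of slope exactly $\mu$ (or of any slope $\ge\mu(Z)$) with $E\otimes\cI_Z$ acyclic; in particular $H^0(E\otimes\cI_Z)=0$, so $Z\notin D_E$. Since $[D_E]$ is a positive multiple of $\mu H - \tfrac B2$ and the stable base locus is determined by a class up to positive scaling, $Z$ is not in the stable base locus of $|\mu H - \tfrac B2|$. One should also remark that $D_E$ is genuinely effective and nonzero — i.e. $H^0(E\otimes\cI_{Z'})\ne 0$ for the \emph{general} $Z'$ of length $n$ — which follows from $\chi(E\otimes\cI_{Z'})=0$ together with the fact that $E$ has sections but fails interpolation for a general $Z'$ (generic $Z'$ need not be monomial). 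This is a routine dimension count, and I would cite \cite{ABCH} or the preceding discussion rather than redo it.

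The main obstacle is the ``if'' direction: translating ``$Z$ lies on every effective representative of the ray $\mathbb{Q}_{\ge0}(\mu H-\tfrac B2)$'' into a statement purely about vector bundles and interpolation. One must rule out effective divisors in this class that do not obviously arise as a $\Theta$-divisor $D_E$, and one must handle rational slopes with the correct (possibly large) rank. I expect this to be dealt with by the same prioritary-bundle and base-point-freeness arguments already used to prove the more precise form of Theorem~\ref{thm-intro-interpolate}, so that any effective divisor of the given class is, up to the equivalence that matters for stable base loci, dominated by a $D_E$ for a bundle $E$ of slope $\mu$; then Theorem~\ref{thm-intro-interpolate} closes the argument.
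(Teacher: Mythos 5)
Your non-membership direction (the case $\mu\ge\mu(Z)$) is correct and is exactly the paper's argument: Theorem \ref{thm-intro-interpolate} produces a bundle $E$ of slope $\mu$ with $E\te\cI_Z$ acyclic, the locus $D_E$ is then a genuine effective divisor of class proportional to $\mu H-\frac{1}{2}B$, and $Z\notin D_E$, so $Z$ is not in the stable base locus.

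The membership direction, however, has a genuine gap. You propose to show that any effective divisor of class a positive multiple of $\mu H-\frac{1}{2}B$ not containing $Z$ can be upgraded to a vector bundle of slope $\mu$ with interpolation for $Z$, and then invoke Theorem \ref{thm-intro-interpolate}. That upgrade is precisely what the authors say they cannot prove: in the remark preceding the definition of $\mu_{\min}^\perp$ in \S\ref{sec-interpolation} they state that they do not know whether, given a linear system $|D|$ not containing $Z$ in its stable base locus, one can always find a $\Theta$-divisor in $|mD|$ not containing $Z$. Since not every effective divisor on $\P^{2[n]}$ is known to be dominated by a $\Theta$-divisor, the bare statement of Theorem \ref{thm-intro-interpolate} does not by itself force $Z$ into the stable base locus when $\mu<\mu(Z)$. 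The paper's actual argument (the subsection ``Dual curves for rank $1$ monomial objects'' in \S\ref{sec-interpolate}) is intersection-theoretic rather than divisor-by-divisor: starting from the destabilizing sequence $0\to A(-j)\to\cI_Z\to B\to 0$, it constructs a complete one-parameter family of schemes $Z'\cup Y_p$ (with $Y_p=jL\cap\{vx-uz=0\}$) whose ideal sheaves are all extensions of $B$ by $A(-j)$ and which degenerate to $Z$. Because $E\te A(-j)$ and $E\te B$ are acyclic for the optimal interpolating bundle $E$, every member of this family avoids $D_E$, so the resulting complete curve $\alpha\subset\P^{2[n]}$ through $Z$ satisfies $\alpha\cdot D_E=0$; since $\alpha\cdot H>0$, it follows that $\alpha\cdot(aH-\frac{1}{2}B)<0$ for every $a<\mu(Z)$, hence $\alpha$, and in particular $Z$, lies in the stable base locus of $|aH-\frac{1}{2}B|$. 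You need either this dual-curve construction or some substitute for it; the determinantal reduction you sketch does not close the gap.
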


We can generalize Corollary \ref{intro-corollary-sbl} to arbitrary zero-dimensional schemes by passing to the generic initial ideal.

\begin{corollary}\label{intro-cor-gen}
Let $Y$ be a zero-dimensional scheme and let $Z$ be the scheme defined by the generic initial ideal of $Y$. Then $Y$ is not in the stable base locus of the linear system $|aH - \frac{B}{2}|$ if $a \geq \mu(Z)$. 
\end{corollary}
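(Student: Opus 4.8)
The plan is to deduce the statement formally from Corollary~\ref{intro-corollary-sbl} applied to the monomial scheme $Z$, using the degeneration of $Y$ to $Z$ recalled above together with the invariance of stable base loci on $\PP^{2[n]}$ under the natural action of $\Aut(\PP^2)$. Write $\mathbf{B}(aH-\tfrac{B}{2})$ for the stable base locus of the class $aH-\tfrac{B}{2}$; it depends only on the ray this class spans, so it makes sense for every $a\in\Q$ (and, via the usual conventions, for $a\in\R$). This subset of $\PP^{2[n]}$ is closed, being an intersection of base loci of complete linear systems, and it is $\Aut(\PP^2)$-invariant: the group $\Aut(\PP^2)$ acts on $\PP^{2[n]}$ fixing both generators $H$ and $\tfrac{B}{2}$ of the Picard group --- $H$ is pulled back from the symmetric product and $B$ is the locus of non-reduced schemes, each manifestly preserved --- so $\Aut(\PP^2)$ permutes the sections of every $\OO(m(aH-\tfrac{B}{2}))$ and hence fixes each $\Bs |m(aH-\tfrac{B}{2})|$.

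Next I would invoke \cite[Theorem~15.17]{Eisenbud2}: after a general change of coordinates there is a one-parameter subgroup $\lambda\mapsto\sigma_\lambda$ of $\Aut(\PP^2)$ with $\lim_{\lambda\to 0}\sigma_\lambda\cdot[Y]=[Z]$ in $\PP^{2[n]}$, so $[Z]$ lies in the closure of the orbit $\Aut(\PP^2)\cdot[Y]$. Now fix $a\geq\mu(Z)$ and suppose, for contradiction, that $[Y]\in\mathbf{B}(aH-\tfrac{B}{2})$. By the previous paragraph the entire orbit $\Aut(\PP^2)\cdot[Y]$, and hence its closure, lies in the closed set $\mathbf{B}(aH-\tfrac{B}{2})$; therefore $[Z]\in\mathbf{B}(aH-\tfrac{B}{2})$. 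But Corollary~\ref{intro-corollary-sbl} asserts that $Z$ lies in the stable base locus of $|aH-\tfrac{B}{2}|$ precisely when $a<\mu(Z)$, contradicting $a\geq\mu(Z)$. Hence $[Y]\notin\mathbf{B}(aH-\tfrac{B}{2})$, which is the assertion.

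Since the corollary is a formal consequence of results already in hand, I do not anticipate a serious obstacle; the only points that need a word are the closedness and $\Aut(\PP^2)$-invariance of the stable base locus above (routine) and the interpretation of $aH-\tfrac{B}{2}$ when $a$ is not half-integral (handled by passing to the ray, or to $\R$-divisors). If one prefers to avoid the invariance argument, an alternative is to use $\Theta$-divisors directly: for each rational $q\geq\mu(Z)$, the interpolation statement for $Y$ obtained above (from Theorem~\ref{thm-intro-interpolate} and the same degeneration) produces a bundle $E$ of slope $q$ with $H^0(E\te\cI_Y)=0$, hence an effective divisor $D_E$ spanning the ray $\R_{>0}(qH-\tfrac{B}{2})$ with $[Y]\notin\Bs(D_E)$, which already settles rational $a$; for irrational $a>\mu(Z)$ one then writes $aH-\tfrac{B}{2}$ as a positive combination of two such rays with $\mu(Z)\leq q_1<a<q_2$ to produce an effective $\R$-divisor in the class $aH-\tfrac{B}{2}$ whose support omits $[Y]$.
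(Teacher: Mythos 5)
Your proposal is correct, and your main argument transfers the conclusion from $Z$ to $Y$ by a genuinely different mechanism than the paper. The paper's (implicit) proof runs through cohomology: the flat degeneration of $\lambda\cdot Y$ to $Z$ from \cite[Theorem 15.17]{Eisenbud2} plus openness of acyclicity in flat families produces, for each rational $a\geq\mu(Z)$, a bundle $E$ of slope $a$ with interpolation for $Y$ (this is the first corollary in the introduction), and then the effective $\Theta$-divisor $D_E$, which is proportional to $aH-\tfrac{B}{2}$ and does not contain $[Y]$, witnesses $[Y]\notin\mathbf{B}(aH-\tfrac{B}{2})$ --- this is exactly the ``alternative'' you sketch in your last sentences. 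Your primary route instead uses only the soft topology of the situation: $[Z]$ lies in $\overline{\Aut(\PP^2)\cdot[Y]}$, the stable base locus is closed and $\Aut(\PP^2)$-invariant (both correctly justified), so membership of $[Y]$ would force membership of $[Z]$, contradicting Corollary \ref{intro-corollary-sbl}. What this buys you is that you never re-run the semicontinuity argument and you handle non-half-integral (even irrational) $a$ uniformly by passing to rays; what you give up is the explicit effective divisor avoiding $[Y]$ that the $\Theta$-divisor construction provides. Note also that the two routes are not independent in substance: the direction of Corollary \ref{intro-corollary-sbl} you invoke is itself proved by the $\Theta$-divisor construction for $Z$, so the underlying machinery coincides and only the transfer step from $Z$ to $Y$ (closedness of $\mathbf{B}$ versus semicontinuity of cohomology) differs.
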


The paper \cite{ABCH} conjectures that a scheme $Z$ is in the stable base locus of the divisors $a H - \frac{B}{2}$  for $a < \mu$  if and only if the ideal sheaf $\cI_Z$ is destabilized at the Bridgeland wall with center $(s,0)$, where $s = - \mu - \frac{3}{2}$. This is checked in \cite{ABCH} for $n \leq 9$ and for all $n$ provided that $\mu \geq  \frac{n-1}{2}$. As an immediate corollary of Theorems \ref{thm-intro-interpolate} and \ref{thm-intro-Bridgeland}, we conclude that the conjecture holds for monomial schemes.

\begin{corollary}
Let $Z$ be a monomial scheme in $\PP^2$. The conjectural correspondence between Bridgeland walls and Mori walls holds for $Z$.
\end{corollary}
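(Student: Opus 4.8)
The plan is to deduce the corollary directly by combining Theorems \ref{thm-intro-interpolate} and \ref{thm-intro-Bridgeland} (via Corollary \ref{intro-corollary-sbl}), so the argument is essentially a matching of two numerical invariants. First I would recall precisely what the conjectured correspondence of \cite{ABCH} asserts for a zero-dimensional scheme $Z$: namely that $Z$ lies in the stable base locus of the linear system $|aH - \frac{B}{2}|$ for every $a < \mu$ but not for $a = \mu$ exactly when the ideal sheaf $\cI_Z$ is destabilized along the semicircular Bridgeland wall whose center is the point $(s,0)$ with $s = -\mu - \frac{3}{2}$. Thus the content of the corollary is that the ``Mori threshold'' of $Z$, i.e.\ the supremum $\mu_{\mathrm{Mori}}(Z)$ of slopes $a$ for which $Z$ lies in the stable base locus of $|aH - \frac{B}{2}|$, agrees with the ``Bridgeland threshold'' $\mu_{\mathrm{Br}}(Z) := -s(Z) - \frac{3}{2}$ extracted from the center $s(Z)$ of the actual destabilizing wall of $\cI_Z$.

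Next I would compute the two sides. By Corollary \ref{intro-corollary-sbl}, $Z$ lies in the stable base locus of $|\mu H - \frac{B}{2}|$ if and only if $\mu < \mu(Z)$, so $\mu_{\mathrm{Mori}}(Z) = \mu(Z)$. By Theorem \ref{thm-intro-Bridgeland}, $\cI_Z$ is destabilized along the semicircular wall with center $s(Z) = -\mu(Z) - \frac{3}{2}$, so $\mu_{\mathrm{Br}}(Z) = -s(Z) - \frac{3}{2} = \mu(Z)$. Hence $\mu_{\mathrm{Mori}}(Z) = \mu_{\mathrm{Br}}(Z) = \mu(Z)$, and the conjectural correspondence holds for $Z$; indeed the equation $s = -\mu - \frac{3}{2}$ appearing in the conjecture is, with $\mu = \mu(Z)$, literally the computation of the wall center carried out just before the statement of Theorem \ref{thm-intro-Bridgeland}.

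The only step that is more than bookkeeping is making sure that the wall produced in Theorem \ref{thm-intro-Bridgeland} really is \emph{the} destabilizing wall of $\cI_Z$, equivalently that $\cI_Z$ is Bridgeland-semistable on the whole region outside the closed disk bounded by this wall, so that $s(Z) = -\mu(Z) - \frac{3}{2}$ is the invariant the conjecture refers to. This is already built into Theorem \ref{thm-intro-Bridgeland} together with the general structure theory (recalled in \S\ref{sec-prelim}, following \cite{ABCH}) that $\cI_Z$ has a unique destabilizing semicircular wall with center on the negative real axis. Granting that, there is no remaining obstacle and the corollary follows immediately.
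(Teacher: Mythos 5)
Your proposal is correct and follows exactly the route the paper intends: the paper gives no separate proof but declares the corollary immediate from Theorems \ref{thm-intro-interpolate} and \ref{thm-intro-Bridgeland}, and your argument is precisely the bookkeeping that makes this explicit, matching the Mori threshold $\mu(Z)$ from Corollary \ref{intro-corollary-sbl} with the Bridgeland wall center $s=-\mu(Z)-\frac{3}{2}$ from Theorem \ref{thm-intro-Bridgeland}. Nothing is missing.
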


The organization of this paper is as follows. In \S \ref{sec-prelim}, we will review the basics of  monomial schemes, vector bundles on $\PP^2$ and Bridgeland stability. In \S \ref{sec-interpolation}, we will prove general theorems concerning interpolation. Section \ref{sec-correspondence} gives a theoretical explanation for the correspondence between Mori walls and Bridgeland walls conjectured in \cite{ABCH}. In \S \ref{sec-ci}, we will solve the interpolation problem for all complete-intersection zero-dimensional schemes. This will demonstrate the general philosophy  in a simple example. Section \ref{sec-monomialObjects} serves as the roadmap for the proof of Theorem \ref{thm-intro-interpolate}. We will describe the inductive structure and introduce the  monomial objects that occur as subobjects and quotient objects in the Bridgeland destabilizing sequences. The next several sections will substantiate the claims in \S \ref{sec-monomialObjects}. In \S \ref{sec-invariants}, we collect numerical invariants associated to monomial objects. In \S \ref{sec-Gieseker}, we check the Gieseker stability of monomial objects. In \S \ref{sec-nesting}, we determine the Bridgeland walls where the monomial objects are destabilized. Finally, in \S \ref{sec-interpolate}, we solve the interpolation problem for monomial schemes.

\subsection{Acknowledgements} We would like to thank Daniele Arcara, Arend Bayer, Aaron Bertram, Joe Harris, and Emanuele Macr\`{i} for many useful discussions.

\section{Preliminaries}\label{sec-prelim}
In this section, we collect basic facts concerning monomial schemes, stable vector bundles on $\PP^2$, and Bridgeland stability.

\subsection{Monomial schemes} We refer the reader to \cite{Eisenbud} for details on monomial schemes.

A {\em monomial scheme} is a scheme whose homogeneous ideal is generated by monomials for some choice of coordinates. In this paper, we will be interested in zero-dimensional monomial schemes in $\PP^2$.  Such a scheme $Z$ is generated by a set of monomials $$x^{a_1}, x^{a_2}y^{b_2}, \dots, y^{b_r},$$ where $a_1> \cdots> a_{r-1}$ and $b_2< \cdots <b_r$.

A {\em block diagram} is a left-justified diagram consisting of finitely many rows of finitely many consecutive boxes such that the number of boxes in each row is non-increasing as we proceed from bottom to top. The box at the lower left corresponds to the monomial $1$. If a box represents the monomial $x^a y^b$, then the box to its immediate right represents the monomial $x^{a+1}y^b$ and the box immediately above represents the monomial $x^a y^{b+1}$.  We can represent $Z$ by a block diagram $D_Z$ which records the monomials in the quotient $$\frac{\C[x,y]}{(x^{a_1}, x^{a_2}y^{b_2}, \dots, y^{b_r})}.$$ The corresponding block diagram has $b_{i+1} - b_{i}$ rows of  $a_i$ boxes for $1 \leq i \leq r-1$. The length of the scheme $Z$ is the total number of boxes in the diagram. Figure \ref{figure-box} depicts the block diagram associated to the scheme generated by the monomials $x^5, x^4 y^2, x^3y^3$ and $y^5$.

\begin{figure}[htbp]
\begin{center}
\input{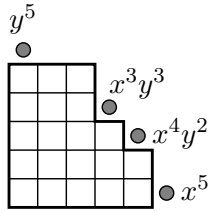}
\end{center}
\caption{Representing a monomial scheme by a block diagram.}
\label{figure-box}
\end{figure}

Conversely, given a block diagram $D$, we can associate a monomial scheme $Z(D)$ to it such that $D = D_{Z(D)}$. Suppose that $D$ has $n_j$ rows of length $a_j$ for $1 \leq j \leq r-1$. Set $a_r=0$ and set $b_i = \sum_{j=1}^{i-1} n_j$ for $1 \leq i \leq r$. The monomial scheme $Z(D)$ is the scheme generated by the monomials $x^{a_i} y^{b_i}$ for $1 \leq i \leq r$. 

The minimal free resolution of a monomial ideal is easy to determine from the set of generators. If $Z$ is generated by the monomials $x^{a_1}, x^{a_2}y^{b_2}, \dots, y^{b_r},$ then the minimal free resolution has the form $$0 \rightarrow \bigoplus_{i=1}^{r-1} \OO(-a_i-b_{i+1}) \xrightarrow{M} \bigoplus_{i=1}^r \OO (-a_i -b_i) \rightarrow \cI_Z \rightarrow 0,$$ where $M$ is the $r \times (r-1)$ matrix  with entries $m_{i,i} = y^{b_{i+1} - b_i}$, $m_{i+1, i} = - x^{a_i - a_{i+1}}$ and $m_{i,j} =0$ otherwise. 
For example, the minimal free resolution of the monomial scheme in Figure \ref{figure-box} is given by 
$$ 0\to \OO(-7)^2 \oplus \OO(-8) \xrightarrow{\left(\begin{array}{ccc} y^2 & 0 & 0 \\ -x & y & 0 \\ 0 & -x & y^2 \\ 0 & 0 & -x^3 \end{array} \right)} \OO(-5) \oplus \OO(-6)^2 \oplus \OO(-5) \xrightarrow{\displaystyle(x^5, x^4y^2, x^3y^3, y^5)}\cI_Z \to 0.$$ 
Observe that every Betti diagram of a zero-dimensional scheme occurs as the Betti diagram of a zero-dimensional monomial scheme \cite{Eisenbud}.

\subsection{Coherent sheaves on $\PP^2$ and stability}\label{sec-prelim-stable}

We refer the reader to \cite{LePotierLectures} and \cite{HuybrechtsLehn} for general background material.  All sheaves in this paper will be coherent.

Let $E$ be a coherent sheaf on $\P^2$.  The dimension $\dim E$ of $E$ is the dimension of the support $\Supp(E)$.  We say $E$ is \emph{pure of dimension $d$} if $\dim E = d$ and every non-trivial coherent subsheaf $F\subset E$ has $\dim F = d$.  

If $E$ has dimension $d$, the Hilbert polynomial $P_E(m) = \chi(E(m))$ of $E$ is of the form $$P_E(m)=\alpha_d \frac{m^d}{d!} + O(m^{d-1}).$$ The \emph{reduced Hilbert polynomial} $p_E$ is defined by $$p_E = \frac{P_E}{\alpha_d}.$$ Then $E$ is \emph{(Gieseker) semistable} (resp. \emph{stable}) if $E$ is pure and for every nontrivial $F\subset E$ we have $p_F\leq p_E$ (resp. $<$), where polynomials are compared for all sufficiently large $m$.  

In the main case of interest, $E$ is pure of dimension $2$ with rank $r>0$.  While the numerical invariants $(r = \ch_0,c_1 = \ch_1,\ch_2)$ can always be used for any coherent sheaf, it is useful to instead use the invariants $(r,\mu,\Delta)$ whenever $r \neq 0$, where the (Mumford) slope $\mu$ and discriminant $\Delta$ are defined by $$\mu = \frac{c_1}{r} \qquad \Delta = \frac{1}{2} \mu^2 - \frac{\ch_2}{r}.$$ Note that if a Chern character $\xi$ is scaled by a number $\lambda$, then $\lambda \xi$ has the same slope and discriminant as $\xi$.  The discriminant also satisfies the identity $\Delta(E\te F) = \Delta(E) + \Delta(F)$.  The sheaf $E$ is stable if and only if every nontrivial subsheaf $F\subset E$ has $\mu(F)\leq \mu(E)$, with $\Delta(F) >\Delta(E)$ in case of equality.

In terms of the slope and discriminant, the Riemann-Roch formula becomes particularly simple.  For a sheaf $E$ of nonzero rank, we have $$\chi(E) = r(P(\mu)-\Delta),$$ where $$P(m) = P_{\OO_{\P^2}}(m) = \frac{1}{2} (m^2+3m+2)$$ is the Hilbert polynomial of the trivial sheaf.  We also define the Euler characteristic $\chi(E,F)$ of a pair of coherent sheaves by $$\chi(E,F) = \sum_{i=0}^2 (-1)^i \dim \Ext^i(E,F).$$ When both sheaves have nonzero rank, this invariant is computed by a Riemann-Roch formula $$\chi(E,F) = r(E)r(F) (P(\mu(F)-\mu(E))-\Delta(E)-\Delta(F)).$$ Since Euler characteristics depend only on Chern characters,  the Euler characteristic induces a pairing $K(\P^2) \times K(\P^2)\to \Z$.  By abuse of notation, we will write expressions such as $\chi(\zeta,\xi)$ or $\chi(\zeta,E)$ to denote this pairing.  The derived dual induces a homomorphism $K(\P^2)\to K(\P^2)$, so we will similarly write $\zeta^*$ for the dual Chern character.

Every coherent sheaf of rank $r\geq 1$ has a Harder-Narasiman filtration with respect to the Mumford slope. We will denote the maximal and minimal slopes in the filtration by $\mu_{\max}$ and $\mu_{\min}$, respectively.
 
 The Bogomolov inequality states that a stable sheaf of nonzero rank satisfies $\Delta \geq 0$.  A result of Drezet and Le Potier refines the Bogomolov inequality to classify the admissible numerical invariants of stable sheaves.  In other words, they classify the numerical invariants $\xi$ such that the moduli space $M(\xi)$ of stable coherent sheaves is nonempty.

\begin{theorem}\cite{Drezet,DLP,LePotierLectures}\label{stableClassification}
There is an explicit function $\delta : \Q\to \Q$ such that the moduli space $M(\xi)$ is positive-dimensional if and only if $\Delta(\xi) \geq \delta(\mu(\xi))$.
\end{theorem}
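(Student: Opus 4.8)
The plan is to build the function $\delta$ out of the exceptional bundles on $\PP^2$ and then prove the two directions of the equivalence separately. First I would recall the structure of exceptional bundles: an exceptional bundle is a stable bundle $E$ with $\Ext^1(E,E)=0$, equivalently $\chi(E,E)=1$, which forces $\Delta(E)=\frac12(1-r^{-2})<\frac12$. By the standard deformation argument — the obstruction space $\Ext^2(E,E)=\Hom(E,E(-3))^{\vee}$ vanishes for a stable $E$ by a slope comparison — one has $\dim M(\xi)=1-\chi(\xi,\xi)=r^2(2\Delta(\xi)-1)+1$ at the Chern character $\xi$ of a stable sheaf, so $M(\xi)$ is a reduced point exactly when $\xi$ is proportional to an exceptional character and is positive-dimensional in every other nonempty case; the real content is therefore to characterize nonemptiness by $\Delta\ge\delta(\mu)$. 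The exceptional slopes $\alpha\in\mathcal{E}\subset\Q$ are generated by the recursion starting from $\OO$ together with a Stern--Brocot/mediant rule on adjacent pairs; each $\alpha$ determines $E_\alpha,r_\alpha,\Delta_\alpha$ uniquely, and on an explicit interval $I_\alpha\ni\alpha$ I would set $\delta_\alpha(\mu)=P(-|\mu-\alpha|)-\Delta_\alpha$, truncated at the endpoints, and finally $\delta(\mu)=\sup_\alpha\delta_\alpha(\mu)$.

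For necessity I would take a semistable sheaf $F$ with invariants $\xi$, write $\mu=\mu(\xi)$, and fix an exceptional slope $\alpha$ with $\mu\in I_\alpha$. By the Riemann--Roch identity $\chi(E_\alpha,F)=r_\alpha r\bigl(P(\mu-\alpha)-\Delta_\alpha-\Delta(\xi)\bigr)$ and its mirror $\chi(F,E_\alpha)$, the bound $\Delta(\xi)\ge\delta_\alpha(\mu)$ is equivalent to the non-positivity of the appropriate one of these Euler characteristics, which in turn follows once the two outer $\Ext$ groups vanish: $\Ext^2(E_\alpha,F)=\Hom(F,E_\alpha(-3))^{\vee}$ (resp. $\Ext^2(F,E_\alpha)=\Hom(E_\alpha,F(-3))^{\vee}$) vanishes for slope reasons, while the remaining $\Hom$ is killed by stability of $F$ and of $E_\alpha$ precisely when $\mu$ lies in the correct half of $I_\alpha$. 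Determining for which $\alpha$ and on which side this happens — that is, the exact description of the intervals $I_\alpha$ — is the technical heart here; granting it, taking the supremum over $\alpha$ yields $\Delta(\xi)\ge\delta(\mu)$.

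For sufficiency I would, whenever $\Delta\ge\delta(\mu)$ and $\xi$ is not exceptional, construct a positive-dimensional family of stable sheaves with character proportional to $\xi$. The engine would be a generalized monad construction: for $\mu$ between adjacent exceptional slopes $\alpha<\gamma$, show that a general morphism $E_\alpha^{\oplus a}\to E_\gamma^{\oplus b}$ — with $a$, $b$ and the pair chosen so that the cokernel carries a positive multiple of $\xi$ (or of a nearby character) — is injective as a sheaf map with torsion-free cokernel, and that this cokernel is stable, either by checking it is prioritary (i.e. $\Ext^2(E,E(-1))=0$) and invoking irreducibility of the stack of prioritary sheaves together with a semicontinuity/genericity argument, or by bounding $\Hom$ out of any potential destabilizing subsheaf. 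This realizes the sharp characters on $\Delta=\delta(\mu)$; general extensions by ideal sheaves of points — which raise $\Delta$, fix $\mu$, and preserve stability — then fill out all of $\Delta\ge\delta(\mu)$. An induction on rank, organized along the binary tree of exceptional slopes, reduces each case to strictly simpler ones and closes the loop, and positive-dimensionality is automatic from the dimension formula since $\xi$ is non-exceptional.

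The hardest part will be that the two directions are tightly interlocked at the boundary $\Delta=\delta(\mu)$: one needs simultaneously that this explicit curve is attained (the monad construction) and that nothing strictly below it occurs (the $\Ext$-vanishing), and both ultimately rest on fine control of morphisms between exceptional bundles — their classification, the braid-group action by mutations on exceptional triples, and the genericity statement that a general map between direct sums of exceptional bundles drops rank only in codimension at least one. Assembling that machinery, rather than any single estimate, is where the real difficulty lies; with it in place, the inequality $\Delta\ge\delta(\mu)$ follows from the bookkeeping sketched above.
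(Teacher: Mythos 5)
First, a point of order: the paper does not prove this theorem — it is quoted from Dr\'ezet and Dr\'ezet--Le Potier \cite{Drezet,DLP,LePotierLectures} — so there is no internal proof to compare against. Your sketch correctly reproduces the architecture of the proof in those references: $\delta$ is assembled from the exceptional bundles, with $\delta(\mu)=\sup_\alpha\bigl(P(-|\mu-\alpha|)-\Delta_\alpha\bigr)$ over exceptional slopes $\alpha$; necessity comes from $\chi(E_\alpha,F)\leq 0$ or $\chi(F,E_\alpha)\leq 0$ via stability and Serre duality; sufficiency comes from resolutions by exceptional bundles and an induction along the tree of exceptional slopes; and the point-versus-positive-dimensional dichotomy is the dimension formula $1-\chi(\xi,\xi)=r^2(2\Delta-1)+1$ together with vanishing of obstructions. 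All of the individual computations you do carry out ($\Delta(E_\alpha)=\frac{1}{2}(1-r_\alpha^{-2})$, the Riemann--Roch identities, the $\Ext^2$ vanishing by slope comparison) are correct.

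That said, what you have written is a roadmap rather than a proof, and the two places you yourself flag as the technical heart are where essentially all of the content lives. (a) The intervals $I_\alpha$: you need the explicit radii (determined by $r_\alpha$), the fact that these intervals cover every rational slope, and the identification of which branch of the sup is active where; without this, $\delta$ is not ``an explicit function'' and the necessity argument does not know which exceptional bundle to test against. (b) The existence direction: the claim that a general map $E_\alpha^{\oplus a}\to E_\gamma^{\oplus b}$ is injective with torsion-free \emph{stable} cokernel is the bulk of Dr\'ezet--Le Potier's work, requiring the classification of morphisms between exceptional bundles and dimension counts for degeneracy loci of general maps. Note also that your fallback — invoking irreducibility of the stack of prioritary sheaves plus openness of (semi)stability — is circular on its own: openness only helps once you have exhibited a single semistable sheaf with the given invariants, which is exactly the statement being proved. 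Neither gap is a wrong turn — both are filled in the cited references — but as submitted the argument establishes neither direction of the equivalence.
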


The cases where the space $M(\xi)$ is zero-dimensional (and precisely $1$ point, corresponding to an \emph{exceptional bundle}) can also be explicitly determined.

\subsection{Bridgeland stability}\label{sec-prelim-Bridge}
In this subsection,  we recall the definition of Bridgeland stability conditions and describe the chamber decomposition of the stability manifold of $\PP^2$. We refer the reader to \cite{ArcaraBertram}, \cite{ABCH}, \cite{BertramCoskun}, \cite{bridgeland:stable}, and \cite{Bridgeland} for more detailed information. 

Let $D^b(\PP^2)$ denote  the bounded derived category of coherent sheaves on $\PP^2$. Let $L$ denote the class of a line. A {\em Bridgeland stability condition} $\sigma$ on $\PP^2$ consists of a pair $\sigma= (\cA, \cZ)$ such that $\cA$ is the heart of a bounded $t$-structure on $D^b (\PP^2)$ and $\cZ : K (D^b (\PP^2)) \rightarrow \C$ is a homomorphism satisfying the following properties:
\smallskip

\noindent (1) (Positivity) For every non-zero object $E$ of $\cA$, $\cZ(E)$ lies in the semi-closed upper half-plane $\{ r e^{i \pi \theta} \ | \ r>0, 0 < \theta \leq 1\}.$ Writing $\cZ= -d(E) + i r(E)$, one may view this condition as two separate positivity conditions requiring $r(E) \geq 0$ and if $r(E) =0$,  then $d(E) > 0$.
\smallskip

\noindent (2) (Harder-Narasimhan property) For an object $E$ of $\mathcal{A}$, let the $\cZ$-slope of $E$ be defined by setting  $\mu(E) = d(E)/ r(E)$ with the understanding that $\mu(E) = \infty$ if $r(E)=0$. An object $E$ is called $\cZ$-stable (resp. semistable) if for every proper subobject $F$, $\mu(F) < \mu(E)$ (resp. $\leq$). The pair $(\cA, \cZ)$ is required to satisfy the Harder-Narasimhan property. Namely, every object of $\mathcal{A}$ has a finite filtration 
$$0 = E_0 \hookrightarrow E_{1} \hookrightarrow \cdots \hookrightarrow E_n = E$$ such that $F_i = E_i / E_{i-1}$ is $\cZ$-semi-stable and $\mu(F_i) > \mu(F_{i+1})$ for all $i$.
\smallskip

The category of coherent sheaves with the stability function $\cZ(E) = - \deg(E) + i  \rk(E)$ is not a Bridgeland stability condition on $\PP^2$ because $\cZ$ is zero on sheaves supported on points. The idea of Bridgeland, Arcara and Bertram is to fix this problem by tilting the category.  Given $s\in \R$, let $\cQ_s$ be the full subcategory of $\coh(\PP^2)$ consisting of torsion sheaves or sheaves $Q$ where $\mu_{\min}(Q)> s$ (where, as in \S \ref{sec-prelim-stable}, $\mu_{\min}$ denotes the minimum slope of a Harder-Narasimhan factor with respect to the Mumford slope).  Similarly, let $\cF_s$  be the full subcategory of $\coh(\PP^2)$ consisting of torsion free sheaves $F$ with $\mu_{\max}(F) \leq s$.
 
By \cite[Lemma 6.1]{Bridgeland}, each pair $(\cF_s,\cQ_s)$ of full subcategories satisfies the two properties
\begin{itemize}
\item[(a)] For all $F\in \cF_s$ and $Q \in \cQ_s$, $\Hom(Q,F) = 0$.
\item[(b)] Every coherent sheaf $E$ fits in a short exact sequence
$0 \rightarrow Q \rightarrow E \rightarrow F \rightarrow 0,$
where $Q \in \cQ_s$, $F\in \cF_s$ and the extension class are uniquely determined
up to isomorphism.
\end{itemize}
\noindent A pair of full subcategories $(\cF,\cQ)$ of an abelian category $\cA$ satisfying conditions (a) and (b) is called a {\em torsion pair}.  A torsion pair $(\cF, \cQ)$  
defines the heart of a new $t$-structure on $\cD^b(\cA)$  by setting \cite{tilting}
$$\cA_{(\cF, \cQ)}:= \{E \in \cD^b(\P^2) \ | \ \rH^{-1}(E) \in \cF, \rH^0(E) \in \cQ, \ \mbox{and}\ \rH^i(E) = 0 \ \mbox{otherwise}\}.$$
The natural exact sequence
$$0 \rightarrow \rH^{-1}(E)[1] \rightarrow E \rightarrow \rH^0(E) \rightarrow 0$$
for such an object of $\cA$ implies that the objects of the heart are all given by pairs of objects 
$F \in \cF$ and $Q \in \cQ$ together with an extension class in $\Ext_{\cA}^2(Q,F)$ \cite{tilting}.

\begin{definition}
Let $\cA_s$ be the heart of the $t$-structure on $\cD^b(\coh(\PP^2))$ obtained from 
the torsion-pair $(\cF_s, \cQ_s)$. 
Define a central charge by setting $$\cZ_{s,t}(E) = - \int_{\P^2} e^{-(s+it)L} \ch(E).$$
\end{definition}

The above formula for $\cZ_{s,t}(E)$ can be expanded out as a formula in terms of the Chern character of $E$.  If $E$ has nonzero rank, it is convenient to express the central charge in terms of the slope and discriminant of $E$.  Specifically, if $\xi = (r,\mu,\Delta)$ is a Chern character with $r\neq 0$, we have 
$$
\cZ_{s,t}(\xi) =r(\Delta-\frac{1}{2}(s+it-\mu)^2) = -\frac{1}{2}r((\mu-s)^2-t^2-2\Delta)+irt(\mu-s).
$$
The slope function $\mu_{s,t}$ is then given by $$\mu_{s,t}(\xi) = \frac{(\mu-s)^2-t^2-2\Delta}{2t(\mu-s)}.$$

\begin{theorem}[Bridgeland \cite{Bridgeland}, Arcara-Bertram \cite{ArcaraBertram}, Bayer-Macr\`i \cite{BayerMacri}]
 For each $s\in \R$ and $t > 0$,  the pair $(\cA_s, \cZ_{s,t})$ defines a Bridgeland stability condition on $\cD^b(\operatorname{coh}(\PP^2))$.
\end{theorem}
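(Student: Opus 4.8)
The statement to establish is that for each $s \in \R$ and $t > 0$, the pair $(\cA_s, \cZ_{s,t})$ is a genuine Bridgeland stability condition. The plan is to verify the two axioms (positivity and the Harder--Narasimhan property) in turn. Since $\cA_s$ has already been constructed as the heart of a bounded $t$-structure via the torsion pair $(\cF_s, \cQ_s)$ and the general tilting formalism of \cite{tilting}, the only substantive work is to check that $\cZ_{s,t}$ is a stability function on this heart and that it has the HN property.

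For positivity, the plan is to reduce to the generators of $\cA_s$. By the description of objects of the heart, every object $E \in \cA_s$ sits in an exact sequence $0 \to \rH^{-1}(E)[1] \to E \to \rH^0(E) \to 0$ with $\rH^{-1}(E) \in \cF_s$ and $\rH^0(E) \in \cQ_s$; since $\cZ_{s,t}$ is additive on short exact sequences, it suffices to check that $\cZ_{s,t}(F[1])$ and $\cZ_{s,t}(Q)$ lie in the semi-closed upper half-plane for $F \in \cF_s$ and $Q \in \cQ_s$, and then that the sum of two such charges cannot land on the negative real axis or at the origin. Using the explicit formula
$$\cZ_{s,t}(\xi) = -\tfrac{1}{2} r\big((\mu-s)^2 - t^2 - 2\Delta\big) + i r t (\mu - s),$$
one sees that the imaginary part $r t(\mu - s)$ is strictly positive for a slope-$\mu$ Mumford-semistable torsion-free sheaf with $\mu > s$ (a generator of $\cQ_s$), and strictly negative for one with $\mu \le s$ shifted by $[1]$ (i.e.\ $-rt(\mu - s) \geq 0$, with the boundary case $\mu = s$ forcing us to check the real part). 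The boundary cases — torsion sheaves in $\cQ_s$ (where $r = 0$ and one checks $d(E) > 0$ directly from the $\ch_1, \ch_2$ expansion) and torsion-free semistable sheaves of slope exactly $s$ — are where one must be careful: for the latter, when $F \in \cF_s$ has $\mu_{\max}(F) = s$, the imaginary part of $\cZ_{s,t}(F[1])$ can vanish, and one needs $\Delta(F) \geq 0$ (Bogomolov, via the HN factors and Theorem \ref{stableClassification}) together with $t > 0$ to force the real part of $\cZ_{s,t}(F[1])$ to be strictly positive. The decomposition of a general object into HN factors of its cohomology sheaves then upgrades pointwise positivity on generators to positivity on all of $\cA_s$, using that a sum of elements of the semi-closed upper half-plane lies on the negative real axis only if each summand does.

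For the Harder--Narasimhan property, the plan is to invoke the standard criterion (as in \cite{Bridgeland}, or the Noetherian/Artinian argument of \cite{ArcaraBertram, BayerMacri}): it suffices to show that $\cA_s$ is Noetherian and that the image of $\cZ_{s,t}$ on objects has a suitably discrete structure, so that there are no infinite descending or ascending chains of subobjects with decreasing slope. Concretely, one shows that for a fixed object $E$ the possible imaginary parts $r t(\mu - s)$ of subobjects in $\cA_s$ form a discrete subset of a bounded interval $[0, \mathrm{Im}\,\cZ_{s,t}(E)]$ — this uses that $\rk$ and $\ch_1$ are integral and $s$ is fixed — which rules out infinite chains in which the imaginary part strictly decreases; the remaining chains with constant imaginary part are controlled by the fact that the real part is then bounded and also discrete-valued along such chains. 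Combined with the (well-known) Noetherianity of $\cA_s$, this yields the HN filtration by the general mechanism.

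The main obstacle I expect is the positivity axiom at its boundary cases, specifically handling torsion-free semistable sheaves of slope exactly $s$ (the objects straddling $\cF_s$ and $\cQ_s$) and verifying that the real part of the central charge has the correct sign there — this is exactly where the Bogomolov inequality $\Delta \geq 0$ from Theorem \ref{stableClassification} enters and where a naive check would fail. Everything else is either formal (the tilting machinery gives the heart for free) or a routine-but-careful invocation of the standard HN existence criterion; I would not reprove that criterion but cite \cite{Bridgeland, ArcaraBertram, BayerMacri} and focus the exposition on the positivity computation and the discreteness estimate that feeds the HN argument.
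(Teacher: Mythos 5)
The paper offers no proof of this statement: it is imported as a black box from \cite{Bridgeland}, \cite{ArcaraBertram}, and \cite{BayerMacri}, so there is no in-paper argument to compare yours against. Judged on its own terms, your outline follows the standard proof in those references: the tilting formalism gives the heart for free, positivity is checked on the generators of $\cQ_s$ and $\cF_s[1]$ with the Bogomolov inequality handling the Mumford-semistable slope-$s$ factors, and the Harder--Narasimhan property comes from Noetherianity of $\cA_s$ together with a finiteness statement about central charges of subobjects. Two corrections are needed, one cosmetic and one substantive.

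The cosmetic one: in the boundary case of a semistable $F\in\cF_s$ with $\mu(F)=s$, you want $\Re(\cZ_{s,t}(F[1]))$ strictly \emph{negative} (equivalently $d(F[1])>0$ in the convention $\cZ=-d+ir$), not positive; your computation $\Re(\cZ_{s,t}(F))=\frac{1}{2}r(t^2+2\Delta)>0$ via $t>0$ and $\Delta\geq 0$ is exactly right, and you have only flipped a sign when passing to the shift. The substantive gap is the discreteness claim feeding the HN argument: the set $\{\,t(c_1-rs): r,c_1\in\Z\,\}$ is discrete only when $s$ is rational; for irrational $s$ it is dense in $\R$, so "the imaginary parts of subobjects form a discrete subset of a bounded interval" fails for precisely some of the values of $s$ the theorem claims to cover. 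The standard repair is to first establish the stability condition for $s\in\Q$ (or on a dense set of $(s,t)$ where discreteness does hold) and then extend to all real $s$ and $t>0$ either by Bridgeland's deformation theorem, using local finiteness of walls, or by verifying the support property directly as in \cite{BayerMacri}. Without one of these extra steps your proof only covers rational $s$. Finally, the Noetherianity of $\cA_s$ that you describe as well known is itself a nontrivial theorem rather than a formality, though citing \cite{Bridgeland} for it is legitimate.
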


Fix a class $\xi$ in the numerical Grothendieck group. Then there exists a locally finite set of walls in the $(s,t)$-half plane, depending only on $\xi$, such that as the stability condition $\sigma = \sigma_{s,t}$ varies in a chamber, the set of $\sigma$-(semi)-stable objects of class $\xi$ does not change (\cite{Bridgeland}, \cite{BayerMacri}, \cite{BayerMacri2}). We call these walls {\em Bridgeland walls}.

Suppose $\xi,\zeta\in K(\P^2)\te \R$ are two linearly independent real Chern characters.  By contrast with Bridgeland walls, a \emph{potential Bridgeland wall} is a set in the $(s,t)$-half-plane of the form $$W(\xi,\zeta) = \{(s,t):\mu_{s,t}(\xi) = \mu_{s,t}(\zeta)\},$$ where $\mu_{s,t}$ is the slope associated to $\cZ_{s,t}$.  Bridgeland walls are always potential Bridgeland walls.  The \emph{potential Bridgeland walls for $\xi$} are all the potential walls $W(\xi,\zeta)$ as $\zeta$ varies in $K(\P^2)\te \R$.  If $E,F\in D^b(\P^2)$, we also write $W(E,F)$ as a shorthand for $W(\ch(E),\ch(F))$.

The potential walls $W(\xi,\zeta)$ can be easily computed in terms of the Chern characters $\xi = (r,c,d)$ and $\zeta = (r',c',d')$.  
\begin{enumerate} \item If $\mu(\xi) = \mu(\zeta)$ (where the Mumford slope is interpreted as $\infty$ if the rank is $0$) then the wall $W(\xi,\zeta)$ is the vertical line $s= \mu(\xi)$ (interpreted as the empty set when the slope is infinite).  
\item Otherwise, without loss of generality assume $\mu(\xi)$ is finite, so that $r\neq 0$.  The walls $W(\xi,\zeta)$ and $W(\xi,\xi+\zeta)$ are equal, so we may further reduce to the case where both $\xi$ and $\zeta$ have nonzero rank. Then we may encode $\xi = (r_1,\mu_1,\Delta_1)$ and $\zeta = (r_2,\mu_2,\Delta_2)$ in terms of slope and discriminant instead of $\ch_1$ and $\ch_2$.  The wall $W(\xi,\zeta)$ is the semicircle centered at the point $(s,0)$ with $$s = \frac{1}{2}(\mu_1+\mu_2)-\frac{\Delta_1-\Delta_2}{\mu_1-\mu_2}$$ and having radius $\rho$ given by $$\rho^2 = (s-\mu_1)^2-2\Delta_1.$$ If this expression is negative, we define the \emph{virtual radius} of $W(\xi,\zeta)$ by this formula, and the wall is empty.
\end{enumerate}

In the principal case of interest, the Chern character $\xi = (r,\mu,\Delta)$ has nonzero rank $r\neq 0$ and nonnegative discriminant $\Delta\geq 0$.  In this case, the potential walls for $\xi$ consist of a vertical wall $s=\mu$ together with two disjoint nested families of semicircles on either side of this line \cite{ABCH}.  Specifically, for any $s$ with $|s-\mu| > \sqrt{2\Delta}$, there is a unique semicircular potential wall with center $(s,0)$ and radius $\rho$ satisfying $$\rho^2 = (s-\mu)^2 - 2\Delta.$$ The semicircles are centered along the $s$-axis, with smaller semicircles having centers closer to the vertical wall.  Every point in the $(s,t)$-half-plane lies on a unique potential wall for $\xi$.  When $r>0$, only the family of semicircles left of the vertical wall is interesting, since an object $E$ with Chern character $\xi$ can only be in categories $\cA_s$ with $s<\mu$.  Similarly, for objects with $r<0$ only the family of semicircles to the right of the vertical wall is relevant.

Occasionally it will be important to consider potential walls for $1$-dimensional sheaves as well.  For a Chern character $\xi = cL+dL^2$ with $c\neq 0$, the potential Bridgeland walls are concentric semicircles centered at the point $(\frac{d}{c},0)$.  These semicircles foliate the upper half-plane.

\begin{figure}[htbp]
\begin{center}
\input{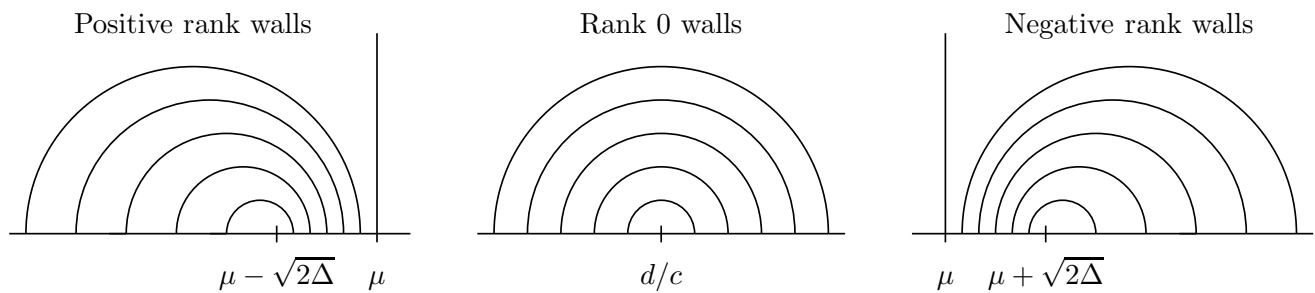}
\end{center}
\caption{Relevant potential Bridgeland walls for (i) a positive rank object with $\Delta\geq 0$; (ii) a rank $0$ object with Chern character $cL+dL^2$ and $c\neq 0$; (iii) a negative rank object with $\Delta\geq 0$. For positive (resp. negative) rank walls, the centers of the walls converge to $\mu - \sqrt{2\Delta}$ (resp. $\mu + \sqrt{2\Delta}$).}
\label{figure-walls}
\end{figure}

\section{Interpolation, stability, and prioritary bundles}\label{sec-interpolation}

In this section, we study the relation between stability of vector bundles on $\P^2$ and interpolation problems.  Our first main result shows that semistability is a natural condition to consider when studying interpolation problems.  We first need a simple lemma.

\begin{lemma}\label{interpOnePointLem}
Let $E$ be a torsion-free sheaf such that $H^1(E\te \cI_p)=0$ for a general point $p\in \P^2$.  Then 
\begin{enumerate}
\item $\mu(E)\geq 0$,
\item $H^1(E) = 0$, and
\item $H^2(E) = H^2(E\te \cI_p) = 0$.
\end{enumerate}
\end{lemma}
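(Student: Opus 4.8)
The plan is to extract everything from the single short exact sequence relating $E$ to its restriction at a general point, supplemented by a Serre duality computation for the two $H^2$ vanishings.

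First, since $E$ is torsion-free on the smooth surface $\P^2$, it is locally free away from a finite set $S$, and we may assume $r := \rk E \geq 1$ (otherwise $E = 0$). Choose $p$ general: in particular $p \notin S$ and $p$ is general enough that $H^1(E\te\cI_p) = 0$. Tensoring $0 \to \cI_p \to \OO \to \OO_p \to 0$ with $E$ stays exact because $E$ is locally free near $p$, giving $0 \to E\te\cI_p \to E \to E_p \to 0$ with $E_p \cong k(p)^r$. Passing to cohomology and using $H^1(E_p) = H^2(E_p) = 0$ (the support is a point): the map $H^1(E\te\cI_p) \to H^1(E)$ is surjective, so $H^1(E\te\cI_p) = 0$ forces $H^1(E) = 0$, which is part (2); and $H^2(E\te\cI_p) \cong H^2(E)$, which reduces part (3) to showing $H^2(E) = 0$.

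Next, $H^1(E\te\cI_p) = 0$ also makes $H^0(E) \to H^0(E_p) = k(p)^r$ surjective, so we may pick sections $s_1, \dots, s_r \in H^0(E)$ restricting to a basis of $E_p$. They define a map $\OO^r \to E$ that is an isomorphism on fibers at $p$, hence an isomorphism in a neighborhood of $p$ by Nakayama (using local freeness at $p$); its kernel is a torsion-free subsheaf of $\OO^r$ vanishing near $p$, hence zero. We thus obtain $0 \to \OO^r \to E \to Q \to 0$ with $Q$ a torsion sheaf supported off $p$, so of dimension $\leq 1$. Then $c_1(E) = c_1(Q)$ is effective, so $\mu(E) = \deg c_1(E)/r \geq 0$, proving part (1).

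Finally, apply $\Hom(-,\OO(-3))$ to $0 \to \OO^r \to E \to Q \to 0$ to get an exact sequence $0 \to \Hom(Q,\OO(-3)) \to \Hom(E,\OO(-3)) \to \Hom(\OO^r,\OO(-3))$. The right-hand term is $H^0(\OO(-3))^r = 0$, and the left-hand term vanishes since any map from a torsion sheaf to the torsion-free sheaf $\OO(-3)$ is zero; hence $\Hom(E,\OO(-3)) = 0$. By Serre duality $H^2(E) \cong \Hom(E, \omega_{\P^2})^{\vee} = \Hom(E,\OO(-3))^{\vee} = 0$, and therefore $H^2(E\te\cI_p) = 0$ by the isomorphism above, completing part (3). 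I do not anticipate a real obstacle; the only point requiring attention is arranging that the general point $p$ satisfies local freeness, the hypothesis $H^1(E\te\cI_p) = 0$, and the genericity needed for the fiber argument all at once, which is fine since each is an open condition on $p$.
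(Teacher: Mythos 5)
Your proof is correct, and it diverges from the paper's in a worthwhile way. The first half is identical: both arguments tensor $0\to\cI_p\to\OO\to\OO_p\to 0$ with $E$ near a general $p$, read off $H^1(E)=0$ and $H^2(E)\cong H^2(E\te\cI_p)$ from the long exact sequence, and use surjectivity of $H^0(E)\to E_p$ to produce $r$ sections spanning the fiber. For part (1) the paper wedges those sections to get a nonzero section of $(\wedge^r E)^{**}=\det E$, whereas you package them into an injection $\OO^r\hookrightarrow E$ with torsion cokernel $Q$ and invoke effectivity of $c_1(Q)$; these are essentially equivalent. The real difference is in proving $H^2(E)=0$: the paper argues by induction on the length of the Harder--Narasimhan filtration, peeling off the maximal destabilizing subsheaf $E_1$ (which has $\mu(E_1)>0$, hence $H^2(E_1)=H^2(E_1\te\cI_p)=0$ by semistability and Serre duality) and propagating the hypothesis $H^1(\,\cdot\,\te\cI_p)=0$ to the quotient $E/E_1$. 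You instead reuse the sequence $0\to\OO^r\to E\to Q\to 0$: applying $\Hom(-,\OO(-3))$ kills both outer terms ($H^0(\OO(-3))=0$ and no nonzero maps from a torsion sheaf to a torsion-free one), so $\Hom(E,\omega_{\P^2})=0$ and Serre duality finishes. Your route is more elementary and self-contained --- it avoids the HN machinery and the inductive bookkeeping entirely --- while the paper's inductive scheme is the template it reuses almost verbatim inside the proof of Theorem \ref{semistableThm} to get $H^2(F\te\cI_Z)=0$ for the destabilizing subsheaf, which is presumably why the authors set it up that way here. Both arguments are sound; the only point you rightly flag, that local freeness at $p$, the vanishing hypothesis, and the fiber-spanning condition must hold simultaneously for a general $p$, is handled exactly as you say, since each is an open condition.
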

\begin{proof}
Since $p$ is a general point, there is an exact sequence of sheaves $$0\to E \te \cI_p \to E \to E\te \OO_p\to 0.$$ The long exact sequence for cohomology implies part (2) and the fact that $H^2(E) = H^2(E\te \cI_p)$ for a general $p$.  

As $p$ is general, $E$ is locally free of rank $r$ near $p$.  We find from $H^1(E\te \cI_p)=0$ that the restriction map $H^0(E)\to E_p$ is surjective, where $E_p$ is the fiber of $E$ at $p$.  Then the values of $r$ general sections $s_1,\ldots,s_r\in H^0(E)$ span the fiber $E_p$, so $\wedge_i s_i$ is a nonzero section of $\wedge^r E$.  The inclusion $$0\to \wedge^r E \to (\wedge^r E)^{**} = \det E$$ gives a section of $\det E$.  We conclude $c_1(\det E) = c_1(E)$ is effective, so $\mu(E)\geq 0$.

By  induction on the length $\ell$ of the Harder-Narasimhan filtration of $E$, we show that if $H^1(E\te \cI_p)=0$, then $H^2(E)=0$.  If $E$ is semistable, then $\mu(E) \geq 0$ and Serre duality implies that $H^2(E)=0$.

In the general case, let $\ell\geq 2$ and suppose the result is known for torsion-free sheaves with Harder-Narasimhan filtration of length at most $\ell-1$.  Let $0\subset E_1\subset \cdots \subset E_{\ell} = E$ be the Harder-Narasimhan filtration, with semistable quotients $E_i/E_{i-1}$ of decreasing slope.  Consider the exact sequence $$0\to E_1 \to E \to E/E_1 \to 0,$$ with $E_1$ semistable and $E/E_1$ torsion-free. Then $\mu(E_1) > \mu(E) \geq 0$, so $H^2(E_1) = 0$ and $H^2(E_1\te \cI_p) = 0$ for general $p$.  We find $H^1((E/E_1)\te \cI_p)=0,$ so by induction $H^2(E/E_1)=0$ and $H^2(E)=0$.
\end{proof}

\begin{remark}
In particular, the assumptions of Lemma \ref{interpOnePointLem} are satisfied if $E$ is a vector bundle and $H^1(E\te \cI_Z)=0$ for some fixed nonempty zero-dimensional scheme $Z$.
\end{remark}

\begin{theorem}\label{semistableThm}
Let $Z\subset \P^{2}$ be a nonempty zero-dimensional scheme, and suppose $E$ is a vector bundle such that $H^1(E\te \cI_Z)=0$.  If the slope of $E$ is minimal among all bundles with this property, then $E$ is semistable.
\end{theorem}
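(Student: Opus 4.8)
The plan is to argue by contradiction: supposing $E$ has minimal slope among all bundles $E'$ with $H^1(E'\te\cI_Z)=0$ but is not semistable, I will exhibit a vector bundle of strictly smaller slope which also has this property.

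So assume $E$ is not semistable and let $F\subset E$ be its maximal destabilizing subsheaf, so that $F$ is semistable, $\mu(F)>\mu(E)$, and the quotient $Q:=E/F$ is torsion-free. Since $F$ is a saturated subsheaf of the locally free sheaf $E$ on the smooth surface $\PP^2$, it is reflexive, hence locally free; and since $F$ is a proper nonzero subsheaf, $Q$ has rank between $1$ and $\rk E-1$, and writing $\mu(E)$ as the rank-weighted average of $\mu(F)$ and $\mu(Q)$ shows $\mu(Q)<\mu(E)$. The goal is to prove that the reflexive hull $Q^{**}$ --- a vector bundle on $\PP^2$ with $\mu(Q^{**})=\mu(Q)<\mu(E)$ --- satisfies $H^1(Q^{**}\te\cI_Z)=0$, which contradicts the minimality of $\mu(E)$ and hence shows $E$ is semistable.

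To produce this vanishing I first pass from $E$ to $Q$. Tensoring $0\to F\to E\to Q\to 0$ by $\cI_Z$: since $E$ is locally free, $\mathrm{Tor}_1(\cI_Z,E)=0$, so $\mathrm{Tor}_1(\cI_Z,Q)$ --- a torsion sheaf, being supported on $Z$ --- embeds into the torsion-free sheaf $F\te\cI_Z$ and therefore vanishes; hence $0\to F\te\cI_Z\to E\te\cI_Z\to Q\te\cI_Z\to 0$ is exact. By Lemma \ref{interpOnePointLem} (applicable here, by the remark following it, since $Z$ is nonempty) we have $\mu(E)\ge 0$, so $\mu(F)>0$, and thus Serre duality together with the semistability of $F$ gives $H^2(F)=0$. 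Since $F$ is locally free, tensoring $0\to\cI_Z\to\OO\to\OO_Z\to 0$ by $F$ and using that $\OO_Z\te F$ has dimension $0$ yields $H^2(F\te\cI_Z)\cong H^2(F)=0$. The long exact cohomology sequence of $0\to F\te\cI_Z\to E\te\cI_Z\to Q\te\cI_Z\to 0$, combined with $H^1(E\te\cI_Z)=0$, then forces $H^1(Q\te\cI_Z)=0$. Finally I pass from $Q$ to $Q^{**}$: tensoring $0\to Q\to Q^{**}\to T\to 0$ (with $T$ of dimension $0$) by $\cI_Z$ and using $\mathrm{Tor}_1(\cI_Z,Q^{**})=0$ produces a four-term exact sequence $0\to\mathrm{Tor}_1(\cI_Z,T)\to Q\te\cI_Z\to Q^{**}\te\cI_Z\to T\te\cI_Z\to 0$ whose two outer terms have dimension $0$; splitting it into two short exact sequences and chasing cohomology propagates $H^1(Q\te\cI_Z)=0$ to $H^1(Q^{**}\te\cI_Z)=0$.

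The step I expect to require the most care is the bookkeeping forced by the fact that $Q=E/F$ need not be locally free: one must verify that tensoring each of the relevant short exact sequences by $\cI_Z$ remains exact on the left, which is exactly the content of the $\mathrm{Tor}$-vanishing statements above (these use that $\cI_Z$ has projective dimension $1$ on $\PP^2$ and that the sheaves involved are torsion-free or locally free), and one must check that replacing $Q$ by its reflexive hull changes neither the slope nor the vanishing of $H^1(-\te\cI_Z)$. By comparison, the slope inequality $\mu(Q)<\mu(E)$, the bound $\mu(E)\ge 0$ from Lemma \ref{interpOnePointLem}, and the vanishing $H^2(F)=0$ for the positive-slope semistable bundle $F$ are all immediate.
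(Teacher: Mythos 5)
Your proof is correct and follows essentially the same route as the paper: take the maximal destabilizing subsheaf $F$, use Lemma \ref{interpOnePointLem} to get $\mu(E)\geq 0$ and hence $H^2(F\te \cI_Z)=0$, deduce $H^1(Q\te\cI_Z)=0$, and pass to the locally free sheaf $Q^{**}$ of smaller slope to contradict minimality. The only difference is technical: where the paper applies a general automorphism of $\P^2$ to move the singularities of $F$, $Q$, and $T=Q^{**}/Q$ away from $Z$ so that tensoring by $\cI_Z$ stays exact, you instead observe that $F$ is locally free (being saturated in $E$ on a smooth surface) and kill the relevant $\mathrm{Tor}_1$ terms directly --- both justifications are valid.
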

\begin{proof}
Suppose $E$ is not semistable.  Let $F\subset E$ be a maximal destabilizing subsheaf.  In the exact sequence $$0 \to F \to E \to Q\to 0,$$ the sheaf $F$ is semistable of slope $\mu(F)>\mu(E)$ and the sheaf $Q$ is torsion-free of slope $\mu(Q)<\mu(E)$.  A priori some of the singularities of $F$ or $Q$ could be supported at points in the support of $Z$.  If $g\in \Aut \P^2$ is an automorphism of $\P^2$, the sheaves $g^\ast E \te \cI_Z$ form a flat family over $\Aut \P^2$ since $E$ is locally free.  Then if $g\in \mathrm{Aut}\,\P^2$ is general, we have $H^1(g^\ast E \te \cI_Z)=0$ and the sheaves $g^\ast F,g^\ast Q$ will have singularities disjoint from $Z$.  Semistability of $E$ is equivalent to semistability of $g^\ast E$, so without loss of generality we assume the singularities of $F$, $Q$ do not meet $Z$.  This implies the sequence $$0\to F \te \cI_Z\to E\te \cI_Z \to Q\te \cI_Z \to 0$$ is exact.  We wish to show $H^1(Q\te \cI_Z)=0$, which will follow from our assumption on $E$ if we can show $H^2(F\te \cI_Z)=0$.

By Lemma \ref{interpOnePointLem}, we have $\mu(E) \geq 0$, so $\mu(F)>0$ and $H^2(F) = 0$ since $F$ is semistable.  We easily conclude $H^2(F\te \cI_Z)=0$ as in the proof of the lemma.

Finally, to obtain a contradiction, consider the (locally free) double dual $Q^{\ast\ast}$, with $\mu(Q^{\ast\ast})= \mu(Q)$.  Since $Q$ is torsion-free, there is an exact sequence $$0\to Q\to Q^{\ast\ast}\to T\to 0$$ where $T$ is $0$-dimensional with support disjoint from $Z$.  We conclude $H^1(Q^{\ast\ast}\te \cI_Z) = 0$, which violates the minimality assumption on $E$.
\end{proof}

While it is not at all obvious that the set of slopes of vector bundles with $H^1(E \te \cI_Z)=0$ actually has a minimum, analysis of the proof nevertheless yields the following result.

\begin{corollary}\label{assumeStableCor}
Let $Z\subset \P^2$ be a nonempty zero-dimensional scheme, and suppose $E$ is a vector bundle such that $H^1(E\te \cI_Z)=0$.  Then there is a stable bundle $E'$ of slope $\mu(E')\leq \mu(E)$ such that $H^1(E'\te \cI_Z)=0$.
\end{corollary}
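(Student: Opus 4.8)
The plan is to induct on the rank $r$ of $E$, essentially reorganizing the proof of Theorem \ref{semistableThm} so as to sidestep the fact (noted just above the statement) that the set of slopes of bundles $E$ with $H^1(E \te \cI_Z) = 0$ need not attain its infimum. When $r = 1$ the bundle $E$ is a line bundle, hence already stable, and we take $E' = E$.

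For the inductive step I would assume the result for all bundles of rank less than $r$. If $E$ is stable there is nothing to prove, so assume not. First I would produce a proper nonzero saturated subsheaf $F \subset E$ with $\mu(F) \geq \mu(E)$ which is itself semistable: if $E$ is not semistable, take $F$ to be the maximal destabilizing subsheaf; if $E$ is strictly semistable, take $F$ to be any proper saturated subsheaf of slope $\mu(E)$, which is automatically semistable. Set $Q = E/F$, a torsion-free sheaf of rank $\rk Q = r - \rk F < r$ and slope $\mu(Q) \leq \mu(E)$. As in Theorem \ref{semistableThm}, after replacing $E$ by $g^\ast E$ for a general $g \in \Aut \P^2$ --- which changes neither the (semi)stability type of $E$ nor the vanishing $H^1(E \te \cI_Z) = 0$ --- I may assume the singular loci of $F$ and $Q$ miss $Z$, so that $$0 \to F \te \cI_Z \to E \te \cI_Z \to Q \te \cI_Z \to 0$$ is exact.

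Next I would show $H^1(Q \te \cI_Z) = 0$. By the Remark following Lemma \ref{interpOnePointLem} together with the Lemma, $\mu(E) \geq 0$, hence $\mu(F) \geq 0$; since $F$ is semistable this forces $H^2(F) = 0$ by Serre duality, and then, exactly as in the proof of Lemma \ref{interpOnePointLem} and using that $F$ is locally free near $Z$, one gets $H^2(F \te \cI_Z) = 0$. The long exact sequence of the displayed short exact sequence then gives $H^1(Q \te \cI_Z) = 0$. Finally I would pass to the reflexive hull $Q^{\ast\ast}$, which is locally free since $\P^2$ is a smooth surface; the sequence $0 \to Q \to Q^{\ast\ast} \to T \to 0$ has $T$ zero-dimensional, supported on the singular locus of $Q$ and hence disjoint from $Z$, so tensoring with $\cI_Z$ and taking cohomology yields $H^1(Q^{\ast\ast} \te \cI_Z) = 0$. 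Now $Q^{\ast\ast}$ is a vector bundle of rank $\rk Q < r$ with $\mu(Q^{\ast\ast}) = \mu(Q) \leq \mu(E)$, so the inductive hypothesis produces a stable bundle $E'$ with $\mu(E') \leq \mu(Q^{\ast\ast}) \leq \mu(E)$ and $H^1(E' \te \cI_Z) = 0$, completing the induction.

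I do not expect a genuine obstacle, since every ingredient is already present in the proof of Theorem \ref{semistableThm}; the only point needing a little care is the uniform production of a semistable saturated destabilizing subsheaf $F$ with $\mu(F) \geq \mu(E) \geq 0$ in both the strictly-semistable and the unstable case, which is what makes the cohomology vanishing $H^2(F \te \cI_Z) = 0$ available at every stage of the induction. Once that is in place, the general-automorphism reduction and the double-dual step are routine.
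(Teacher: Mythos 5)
Your proof is correct and follows essentially the same route as the paper: the paper iterates the passage $E \rightsquigarrow Q^{\ast\ast}$ from the proof of Theorem \ref{semistableThm} until the rank stops dropping, then invokes a Jordan--H\"older argument to pass from semistable to stable. Your induction on rank packages the same iteration, with the strictly-semistable case (a saturated equal-slope subsheaf $F$, which is automatically semistable and still gives $H^2(F\te \cI_Z)=0$) serving as an explicit unwinding of the paper's Jordan--H\"older step.
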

\begin{proof}
If $E$ is not already stable, pass from $E$ to $Q^{**}$ as in the proof of the theorem.  Then apply the argument again to $Q^{**}$, and continue in this fashion.  The ranks of the bundles we obtain are decreasing, so this process must eventually stop, in which case the bundle we obtain is semistable.  By a Jordan-H\"older argument, we can further make the bundle stable.
\end{proof}

The previous results in this section merely concerned $H^1$-vanishing of the sheaf $E\te \cI_Z$.  For the remainder of the section we will focus on the stronger notion of acyclicity of $E\te \cI_Z$ instead, that is, on the interpolation problem for $Z$.  

In what follows, it will be useful to introduce an additional space of coherent sheaves.  A torsion-free coherent sheaf $E$ on $\P^2$ is \emph{prioritary} if $$\Ext^2(E,E(-1))=0.$$ By Serre duality, a semistable sheaf is prioritary.  For a fixed set of numerical invariants $\xi\in K(\P^2)$, the prioritary sheaves of Chern character $\xi$ form an Artin stack $\mathcal P(\xi)$, which is an open substack of the stack of coherent sheaves.  We will need several facts about prioritary sheaves.  

\begin{theorem}\label{prioritaryThm} Let $\xi$ be a Chern character such that $\mathcal P(\xi)$ is nonempty.  \begin{enumerate}\item The stack $\mathcal P(\xi)$ is irreducible.  \cite{HirschowitzLaszlo}

\item The stack of semistable sheaves $\mathcal M(\xi)$ forms an open substack of $\mathcal P(\xi)$, which is irreducible when it is nonempty.

\item If the rank of $\xi$ is at least $2$, then the general member of $\mathcal P(\xi)$ is locally free.  \cite{HirschowitzLaszlo}

\item If the rank of $\xi$ is at least $2$, then the general member of $\mathcal P(\xi)$ is nonspecial.  That is, if $E\in \cP(\xi)$ is general, then at most one of $H^i(E)$ with $0\leq i \leq 2$ is nonzero. In particular, if $\chi(\xi)=0$, then the general $E\in \cP(\xi)$ is acyclic.  \cite{GottscheHirschowitz}
\end{enumerate}
\end{theorem}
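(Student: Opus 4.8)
The plan is to treat the four parts as what they largely are: (1), (3) and (4) are imported from the literature, and (2) is assembled formally from (1) plus standard openness. For part (1), I would invoke Hirschowitz--Laszlo: the content is that $\mathcal P(\xi)$ is irreducible, and, roughly, the point of the defining condition $\Ext^2(E,E(-1))=0$ is that it is exactly what one needs in order to connect an arbitrary prioritary sheaf, through a chain of deformations remaining inside $\mathcal P(\xi)$, to a fixed balanced direct sum of line bundles $\OO(a)^{\oplus i}\oplus\OO(a+1)^{\oplus j}$; this forces the stack to be connected, and together with its local deformation theory gives irreducibility. For part (3), being non-locally-free is a closed condition on $\mathcal P(\xi)$, and when $\rk\xi\geq 2$ a torsion-free sheaf with isolated singularities admits a flat deformation with the same Chern character to a locally free sheaf (smoothing the singularities), so the non-locally-free locus is a proper closed substack and the general prioritary sheaf is a bundle. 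Part (4) is the theorem of G\"ottsche--Hirschowitz: one induces on the discrete invariants, using that the prioritary condition is preserved by tensoring with $\OO(1)$ and by the elementary modifications interpolating between consecutive cases, reducing the cohomology count to an explicit base case; upper semicontinuity of $h^i$ in the irreducible family $\mathcal P(\xi)$ then yields nonspeciality of the general member, and the displayed consequence for $\chi(\xi)=0$ is immediate.

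It remains to prove part (2), which is the only genuinely new bookkeeping. First, a Gieseker-semistable sheaf $E$ of positive rank is pure of dimension $2$, hence torsion-free, and is prioritary: by Serre duality $\Ext^2(E,E(-1))\cong\Hom(E,E(-2))^*$, and a nonzero map $E\to E(-2)$ would yield a sheaf that is simultaneously a quotient of the semistable sheaf $E$ (so of slope $\geq\mu(E)$) and a subsheaf of the semistable sheaf $E(-2)$ (so of slope $\leq\mu(E)-2$), which is absurd; thus the group vanishes and $\mathcal M(\xi)$ is a substack of $\mathcal P(\xi)$. This substack is open in $\mathcal P(\xi)$ because Gieseker semistability is an open condition in flat families by the standard boundedness results of Maruyama. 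Finally, when $\mathcal M(\xi)$ is nonempty it is a nonempty open substack of the irreducible stack $\mathcal P(\xi)$ of part (1), hence is itself irreducible.

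The honest assessment of the "main obstacle" is that essentially all of the mathematical weight lives in the cited theorems of Hirschowitz--Laszlo and G\"ottsche--Hirschowitz, and the only care required here is to confirm that "open" and "irreducible" are being asserted at the level of the Artin stack $\mathcal P(\xi)$, and that the various references use a compatible notion of the prioritary stack; once those conventions are aligned, the assembly of (2) is routine.
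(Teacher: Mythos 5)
Your treatment of parts (1), (2), and (3) matches the paper's: parts (1) and (3) are simply imported from Hirschowitz--Laszlo, and your assembly of part (2) --- Serre duality gives $\Ext^2(E,E(-1))\cong\Hom(E,E(-2))^*$, which vanishes for a semistable $E$ by the slope comparison on the image of a putative map $E\to E(-2)$, followed by openness of semistability and the fact that a nonempty open substack of an irreducible stack is irreducible --- is exactly the argument the paper has in mind (it states ``by Serre duality, a semistable sheaf is prioritary'' just before the theorem and leaves the rest implicit).

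There is, however, a genuine gap in your part (4). You assert that part (4) \emph{is} the theorem of G\"ottsche--Hirschowitz, but as the paper points out in the remark following the theorem, what that paper actually proves is conditional: \emph{if} the general $E\in\cP(\xi)$ has $H^2(E)=0$, \emph{then} the general $E$ is nonspecial. Your sketch of their induction does not explain how to handle the Chern characters for which the general prioritary sheaf has $H^2\neq 0$ (e.g.\ very negative slope), and upper semicontinuity in the irreducible family only controls the \emph{general} member, so it cannot manufacture the needed $H^2$-vanishing. The paper closes this gap with a Serre duality argument that uses the other parts of the theorem in an essential way: if the general $E$ has $H^2(E)\neq 0$, then $\mu(E)\leq -2$; by the Hirschowitz--Laszlo bound $\mu_{\max}(E)-\mu_{\min}(E)\leq 1$ for general prioritary $E$ and by part (3) ($E$ locally free), the dual twist $E^*(-3)$ is a general prioritary sheaf with $\mu_{\min}(E^*(-3))\geq -2$, hence $H^2(E^*(-3))=0$; G\"ottsche--Hirschowitz then applies to $E^*(-3)$, and Serre duality transfers nonspeciality back to $E$. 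Note also that this reduction is where the hypothesis $\rk\xi\geq 2$ is genuinely used (the paper's remark gives $\cI_p(-3)$ as a rank-one counterexample), so it cannot be waved away as bookkeeping. Your proposal needs this bridging argument to be complete.
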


\begin{remark}\label{prioritaryRmk} We explain how part (4) of the theorem follows from \cite{GottscheHirschowitz}.  In that paper, it is shown that if the general $E\in \cP(\xi)$ has $H^2(E) =0$, then the general $E$ is nonspecial.  Let $E\in \cP(\xi)$ be general of rank at least $2$, and suppose that $H^2(E)\neq 0$.  By \cite{HirschowitzLaszlo}, $E$ satisfies $\mu_{\max}(E)-\mu_{\min}(E)\leq 1$.  Since $H^2(E)\neq 0$, we must have $\mu(E)\leq -2$.  By part (3) of the theorem, $E$ is locally free, $\mu(E^*(-3))\geq -1$, and $\mu_{\min}(E^*(-3))\geq -2$.  Then $H^2(E^*(-3))=0$ and $E^*(-3)$ is general in its moduli stack of prioritary sheaves, so $E^*(-3)$ is nonspecial.  By Serre duality, $E$ is nonspecial as well. 

Note that the hypothesis that $\xi$ has rank at least $2$ is crucial in (4).  Indeed, if $p\in \P^2$, then $\cI_p(-3)$ is special, and every torsion-free sheaf with the same Chern character as $\cI_p(-3)$ is of the form $\cI_q(-3)$ for some $q\in \P^2$.
\end{remark}

It is also easy to increase the discriminant and decrease the number of sections of a prioritary sheaf by the following construction.  Suppose $E$ is a torsion-free coherent sheaf with $h^0(E)>0$.  Let $E\to \OO_p$ be a general map, where $p\in \P^2$ is general.  Consider the sheaf $E'$ defined by the sequence $$0\to E'\to E \to \OO_p\to 0.$$ Since $E$ has a section, the map $H^0(E)\to H^0(\OO_p)$ is surjective.  Then we compute $$  h^0(E') = h^0(E)-1, \qquad h^1(E')=h^1(E), \qquad h^2(E')=h^2(E) $$ $$\rk(E')=\rk(E), \qquad \mu(E') = \mu(E), \qquad \Delta(E') = \Delta(E) + \frac{1}{\rk(E)}, \qquad \chi(E') = \chi(E)-1.$$ Now $E'$ is again a torsion-free sheaf, so we may repeat this process so long as $E'$ has a section.  Furthermore, 
 if $E$ is prioritary, then $E'$ is prioritary.  We summarize this discussion as the following lemma.

\begin{lemma}\label{incDeltaLemma}
Let $E$ be a torsion-free coherent sheaf, let $W\subset \P^2$ be a general zero-dimensional scheme of length $w\leq h^0(E)$, let $E\to \OO_W$ be a general map, and define $E'$ by the sequence $$0\to E'\to E \to \OO_W\to 0.$$ Then $$h^0(E')=h^0(E)-w, \qquad H^1(E')\cong H^1(E), \qquad \textrm{and} \qquad H^2(E')\cong H^2(E).$$ If $E$ is prioritary, then $E'$ is also prioritary.
\end{lemma}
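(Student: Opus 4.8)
The plan is to verify the numerical claims by a direct computation from the defining sequence, and to check the statement about prioritariness via the long exact sequence for $\Ext$. First I would observe that it suffices to treat the case $w=1$, i.e.\ $W = \{p\}$ a single general point, and then iterate: if $E$ is torsion-free with $h^0(E) > 0$, pick $p$ general (so $E$ is locally free near $p$) and take a general surjection $E \to \OO_p$; the resulting $E'$ is again torsion-free (it is a subsheaf of the torsion-free sheaf $E$), and one repeats this $w$ times, choosing the points to be general and distinct. Since $\OO_W = \bigoplus_{i} \OO_{p_i}$ for $W$ a general reduced scheme of length $w$, a general map $E \to \OO_W$ is exactly such a composite, so the length-$w$ statement follows once the length-$1$ statement is established. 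One must only check along the way that $h^0$ drops by exactly $1$ at each step, which guarantees we can continue as long as $w \le h^0(E)$.

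For the length-$1$ step, the key point is that $H^0(E) \to H^0(\OO_p)$ is surjective: indeed $h^0(\OO_p) = 1$, and since $p$ is a general point in the support locus where $E$ has a section not vanishing at $p$ (such sections exist because $h^0(E) > 0$ and the vanishing locus of a fixed nonzero section is a proper closed subset), the evaluation map is nonzero, hence surjective. Now the long exact sequence in cohomology attached to $0 \to E' \to E \to \OO_p \to 0$ reads
$$0 \to H^0(E') \to H^0(E) \to H^0(\OO_p) \to H^1(E') \to H^1(E) \to 0$$
together with $H^2(E') \cong H^2(E)$, using $H^1(\OO_p) = H^2(\OO_p) = 0$. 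Surjectivity of $H^0(E) \to H^0(\OO_p)$ then forces $H^1(E') \cong H^1(E)$ and $h^0(E') = h^0(E) - 1$, which are exactly the claimed identities. (The Chern character identities $\rk(E') = \rk(E)$, $\ch_1(E') = \ch_1(E)$, $\ch_2(E') = \ch_2(E) - 1$, whence $\mu(E') = \mu(E)$ and $\Delta(E') = \Delta(E) + 1/\rk(E)$, follow by additivity of $\ch$ on the defining sequence, using $\ch(\OO_p) = L^2$; these give $\chi(E') = \chi(E) - 1$ by Riemann--Roch, consistent with the cohomology computation.)

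It remains to see that $E'$ is prioritary when $E$ is, i.e.\ $\Ext^2(E', E'(-1)) = 0$. Applying $\Hom(-, E'(-1))$ to $0 \to E' \to E \to \OO_p \to 0$ gives an exact piece $\Ext^2(E, E'(-1)) \to \Ext^2(E', E'(-1)) \to \Ext^3(\OO_p, E'(-1))$, and the last term vanishes since $\P^2$ is a smooth surface, so it suffices to show $\Ext^2(E, E'(-1)) = 0$. For this, apply $\Hom(E, -(-1))$ to the same sequence to get $\Ext^2(E, E'(-1)) \to \Ext^2(E, E(-1)) \to \Ext^2(E, \OO_p(-1))$; the middle term is $0$ because $E$ is prioritary, so $\Ext^2(E, E'(-1))$ injects into $\Ext^1(E, \OO_p(-1))$ from the preceding segment... more carefully, the relevant segment is $\Ext^1(E,\OO_p(-1)) \to \Ext^2(E, E'(-1)) \to \Ext^2(E, E(-1)) = 0$, so $\Ext^2(E,E'(-1))$ is a quotient of $\Ext^1(E, \OO_p(-1)) \cong \Ext^1(E, \OO_p)$. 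By local duality / Serre duality on the surface, $\Ext^i(E, \OO_p) \cong \Ext^i_{\OO_{\P^2,p}}(E_p, \OO_p)$ and this vanishes for $i \neq 2$ precisely because $E$ is locally free at the general point $p$ (a free module has a length-$2$ Koszul-type resolution contributing only in degree $2 = \dim$). Hence $\Ext^1(E, \OO_p) = 0$, so $\Ext^2(E, E'(-1)) = 0$ and $E'$ is prioritary.

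The main obstacle, such as it is, is bookkeeping rather than conceptual: one must be careful that all the genericity choices (of $p$, and of the map $E \to \OO_p$) can be made simultaneously with $E$ locally free at $p$ and with the evaluation map nonzero, and that torsion-freeness is preserved at each stage so the iteration is legitimate. The vanishing $\Ext^1(E, \OO_p) = 0$ for $p$ general is the one spot where local freeness of a general torsion-free sheaf at a general point (which holds since the non-locally-free locus of a torsion-free sheaf on a surface is finite) is genuinely used.
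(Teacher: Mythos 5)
Your proposal is correct and follows essentially the same route as the paper's argument (which is the discussion immediately preceding the lemma): reduce to $w=1$, use that a general section of the torsion-free sheaf $E$ is nonvanishing at a general point $p$ to get surjectivity of $H^0(E)\to H^0(\OO_p)$, and read everything off the long exact sequence; you additionally write out the $\Ext$-computation for the prioritary claim, which the paper only asserts. One small slip in that last step: for $E$ locally free at $p$ the groups $\Ext^i(E,\OO_p)\cong \Ext^i_{\OO_{\P^2,p}}(E_p,\C(p))$ are concentrated in degree $i=0$, not $i=2$ (you have the variance backwards --- it is $\Ext^i(\OO_p,E)$ that is concentrated in degree $2$ via the Koszul resolution), but the only vanishing you actually use, $\Ext^1(E,\OO_p)=0$, is still correct, so the argument stands.
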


We now state our most general result on interpolation.  If one knows there are interpolating bundles $E$ for a scheme $Z$ for a particular slope, you can increase the slope while keeping the interpolation property. 

\begin{theorem}\label{bumpingUpThm}
Let $Z\subset \P^2$ be a nonempty zero-dimensional scheme, and suppose $E$ is a vector bundle of slope $\mu$ with $H^1(E\te \cI_Z)=0$.   For each $\mu'\geq \mu$, there is a prioritary bundle of slope $\mu'$ with interpolation for $Z$.

To be more precise, for any slope $\mu'\geq \mu$, let $\Delta'$ be the unique discriminant such that any bundle $F$ with invariants $\xi = (r',\mu',\Delta')$ has $\chi(F \te \cI_Z)=0$.  If $r'$ is sufficiently large and divisible, then the general $F\in \cP(\xi)$ has interpolation for $Z$. Moreover, if the moduli space $M(\xi)$ of semistable bundles is nonempty, then the general $F\in M(\xi)$ has interpolation for $Z$ as well.
\end{theorem}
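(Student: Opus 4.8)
The plan is to start from the given bundle $E$ of slope $\mu$ with $H^1(E\te\cI_Z)=0$ and first reduce to the case $\mu'=\mu$, then handle the slope increase. For the reduction, it suffices to produce, for each $\mu'>\mu$, a single bundle $F_0$ of slope $\mu'$ with $H^1(F_0\te\cI_Z)=0$; then we may apply the $\mu'=\mu$ case of the statement to $F_0$ in place of $E$. To produce such an $F_0$, note that tensoring $E$ by $\OO(1)$ increases the slope by $1$ and preserves the vanishing $H^1(E(1)\te\cI_Z)=0$ (since twisting by an effective divisor only helps $H^1$), and more flexibly, one can take elementary modifications $0\to E\to E'\to\OO_p\to 0$ at general points (as in Lemma \ref{incDeltaLemma}) or extensions by line bundles to hit slopes in between; since the slopes of such bundles are dense and one can always decrease the slope of a bundle with $H^1$-vanishing back down using Corollary \ref{assumeStableCor}, any $\mu'\geq\mu$ is achievable. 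So we are reduced to the case $\mu'=\mu$.

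Now fix $\mu'=\mu$, and let $\Delta'$ be the discriminant forced by $\chi(F\te\cI_Z)=0$ for $F$ of invariants $(r',\mu,\Delta')$. The key point is that $\Delta'$ does not depend on $r'$ in the following sense: since $\chi(F\te\cI_Z)=\chi(F)-r'\deg Z = r'(P(\mu)-\Delta'-\deg Z)$, the condition $\chi=0$ pins down $\Delta' = P(\mu)-\deg Z$ independently of $r'$. Given the original $E$ with invariants $(r,\mu,\Delta_E)$ and $H^1(E\te\cI_Z)=0$, Lemma \ref{interpOnePointLem} applied to $E\te\cI_Z$ (via the remark after that lemma) shows $H^2(E\te\cI_Z)=0$, so $H^0(E\te\cI_Z)=\chi(E\te\cI_Z)=r(P(\mu)-\Delta_E-\deg Z)$, hence $\Delta_E\leq P(\mu)-\deg Z = \Delta'$ (the section space is nonzero-dimensional only if this Euler characteristic is $\geq 0$; more carefully, $H^0\geq 0$ gives $\Delta_E \leq \Delta'$, with the acyclic case $\Delta_E=\Delta'$). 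Now take direct sums $E^{\oplus a}$, which are prioritary (a direct sum of prioritary sheaves is prioritary, and $E$ is prioritary being a bundle with $H^1$-vanishing... actually one should note $E$ need not be prioritary a priori, so instead use Corollary \ref{assumeStableCor} to replace $E$ by a \emph{stable} $E''$ of slope $\leq\mu$ with $H^1(E''\te\cI_Z)=0$, then bump back up to slope exactly $\mu$ staying prioritary via Lemma \ref{incDeltaLemma} and twists; a stable bundle is prioritary by Serre duality). So assume $E$ itself is prioritary with $H^1(E\te\cI_Z)=0$ and invariants $(r,\mu,\Delta_E)$ with $\Delta_E\leq\Delta'$.

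Next, increase the discriminant of $E$ up to $\Delta'$ while preserving prioritariness and the cohomology vanishing. Since $H^0(E\te\cI_Z) = H^0(E\te\cI_Z)$ is large (it equals $r(\Delta'-\Delta_E)$), and a section of $E\te\cI_Z$ is a section of $E$ vanishing on $Z$, we can perform the elementary modification of Lemma \ref{incDeltaLemma} using a general scheme $W$ \emph{disjoint from $Z$} and general maps factoring through sections vanishing on $Z$: this gives $E'$ with $H^1(E'\te\cI_Z)\cong H^1(E\te\cI_Z)=0$, $H^2(E'\te\cI_Z)\cong H^2(E\te\cI_Z)=0$, prioritary, with $\Delta(E')=\Delta_E+\frac{|W|}{r}$. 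Choosing $|W| = r(\Delta'-\Delta_E)$ (an integer, and at most $h^0(E\te\cI_Z)$, which is exactly what forces acyclicity), we reach invariants $\xi_0=(r,\mu,\Delta')$ with an acyclic prioritary bundle $E'$; note $\chi(E'\te\cI_Z)=0$ here automatically. Finally, for $r'$ a large multiple of $r$, take $\xi=(r',\mu,\Delta')$: since $\cP(\xi_0)\ni E'$ is nonempty, and direct sums $(E')^{\oplus (r'/r)}\in\cP(\xi)$ show $\cP(\xi)$ is nonempty; by Theorem \ref{prioritaryThm}(1), $\cP(\xi)$ is irreducible, and being acyclic is an open condition, so the general $F\in\cP(\xi)$ satisfies $H^i(F\te\cI_Z)=0$ for all $i$ (using $\chi(F\te\cI_Z)=0$ and semicontinuity — the non-empty open locus where $F\te\cI_Z$ is acyclic). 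The same openness-plus-irreducibility argument on the (irreducible, by Theorem \ref{prioritaryThm}(2)) substack $\cM(\xi)\subset\cP(\xi)$, when it is nonempty, gives the final sentence, \emph{provided} we know the acyclic locus meets $\cM(\xi)$; but the general prioritary bundle of large rank is already semistable — more precisely one should argue that acyclicity is generic on $\cM(\xi)$ directly by exhibiting \emph{one} acyclic semistable bundle, which can be arranged by taking $r'$ large enough that $\xi$ lies well inside the Drezet–Le Potier region (Theorem \ref{stableClassification}) and running the elementary-modification construction starting from a semistable bundle produced by Corollary \ref{assumeStableCor} rather than an arbitrary one.

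The main obstacle is the last step: controlling the \emph{general semistable} bundle rather than the general prioritary one. For prioritary sheaves, irreducibility of the stack makes the openness argument immediate, but one must separately verify that the acyclic locus actually intersects the semistable locus $\cM(\xi)$ (not just $\cP(\xi)$); the cleanest route is to ensure the elementary-modification construction can be run entirely within semistable bundles — each modification $0\to E'\to E\to\OO_W\to 0$ of a semistable $E$ by a general length-one sheaf at a general point again yields a semistable $E'$ once the discriminant is large enough relative to the rank (this is standard, e.g. generic elementary modifications preserve semistability in the range $\Delta\gg 0$), so starting from the stable bundle furnished by Corollary \ref{assumeStableCor} and taking $r'$ large we stay in $\cM(\xi)$ throughout, and then invoke irreducibility of $\cM(\xi)$ (Theorem \ref{prioritaryThm}(2)) together with openness of acyclicity.
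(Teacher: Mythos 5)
Your overall strategy --- normalize $E$, raise the slope, then use Lemma \ref{incDeltaLemma} to kill the remaining sections and finish with irreducibility of $\cP(\xi)$ plus openness of acyclicity --- is the same as the paper's, but the reduction to the case $\mu'=\mu$ has a real gap: you never actually produce a bundle of slope \emph{exactly} $\mu'$ with $H^1$-vanishing for an arbitrary rational $\mu'\geq\mu$. Twists $E(k)$ only hit the slopes $\mu+k$; the elementary modifications $0\to E\to E'\to \OO_p\to 0$ you invoke do not change the slope at all (rank and $c_1$ are unchanged, only $\ch_2$ moves); and the appeal to ``the slopes of such bundles are dense, and one can decrease back down using Corollary \ref{assumeStableCor}'' does not close the argument, because that corollary produces \emph{some} stable bundle of slope $\leq\mu(E)$ --- it gives no control over which slope you land on, so density above $\mu'$ does not let you hit $\mu'$ itself. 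The paper's fix is the direct sum $F=E(k)^a\oplus E(k+1)^b$ with $\mu+k\leq\mu'<\mu+k+1$: its slope is $\mu+k+\frac{b}{a+b}$, which realizes every rational number in $[\mu+k,\mu+k+1)$, and it is prioritary since $\mu_{\max}-\mu_{\min}\leq 1$. Relatedly, your parenthetical ``twisting by an effective divisor only helps $H^1$'' is not a proof that $H^1(E(k)\te\cI_Z)=0$: from $0\to E(k-1)\te\cI_Z\to E(k)\te\cI_Z\to E(k)|_L\to 0$ one only gets an injection $H^1(E(k)\te\cI_Z)\hookrightarrow H^1(E(k)|_L)$, and the latter need not vanish without knowing more about the restriction of $E$ to a general line. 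The paper first replaces $E$ by a stable bundle via Corollary \ref{assumeStableCor} and then proves $H^1(E(k)|_L)=0$ by an induction that uses $H^2(E(k-2))=0$, which is exactly where stability enters; you invoke that corollary only later and for a different purpose, so as written the twisting step is unsupported.

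On the final point, your worry about the semistable case is a non-issue and the proposed workaround is unnecessary: by Theorem \ref{prioritaryThm}(2), $\cM(\xi)$ is a nonempty \emph{open} substack of the irreducible stack $\cP(\xi)$, and the acyclic locus is a nonempty open substack, so the two automatically intersect; there is no need to argue that generic elementary modifications preserve semistability (a claim you also leave unproven). The remaining steps --- deducing $\Delta_E\leq\Delta'$ from $h^0=\chi\geq 0$, performing the elementary modification of $F\te\cI_Z$ along a general $\OO_W$ with $w=h^0(F\te\cI_Z)$, and identifying the resulting kernel with $F''\te\cI_Z$ for a prioritary subsheaf $F''\subset F$ of the same slope --- match the paper and are fine.
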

\begin{proof}
By Corollary \ref{assumeStableCor}, we may assume $E$ is a stable vector bundle.  Let $\mu'\geq \mu$, and let $k\geq 0$ be the integer such that $$\mu+k \leq \mu' < \mu+k+1.$$  If $a,b$ are chosen appropriately, then the bundle $$F = E(k)^a \oplus E(k+1)^b$$ has slope $\mu'$.  While $F$ is obviously not stable, it is nevertheless prioritary by Serre duality.  
 
We claim $H^1(F \te \cI_Z)=0$, which will follow from the claim that $H^1(E(k) \te \cI_Z) = 0$ for all $k\geq 0$.  We prove this by induction on $k$, with $k=0$ being obvious.  Consider the sequence $$0 \to E(k-1) \to E(k) \to E(k)|_L\to 0$$ where $L$ is a general line.  Tensoring by $\cI_Z$ is exact on this sequence and $H^1(E(k-1)\te \cI_Z)=0$ by induction, so it suffices to show $H^1(E(k)|_L)=0$.  From the sequence $$0\to E(k-2)\to E(k-1)\to E(k-1)|_L\to 0,$$ we see by induction that $H^1(E(k-1)|_L)=0$ since $H^2(E(k-2))=0$ as $\mu(E(k-2))\geq -2$ and $E$ is stable.  Then the sequence $$0\to E(k-1)|_L\to E(k)|_L\to E(k)|_p \to 0$$ on $L$ gives the required vanishing.

Lemma \ref{interpOnePointLem} now shows that $H^1(F) = H^2(F) = H^1(F\te \cI_Z) = H^2(F\te \cI_Z) = 0$.  Let $w = h^0(F \te \cI_Z)$, and let $F\te \cI_Z \to \OO_W$ be a general map, where $W\subset \P^2$ is a general zero-dimensional scheme of length $w$.  If $F'$ is the kernel $$0\to F'\to F\te \cI_Z \to \OO_W\to 0,$$ then $F'$ is acyclic by Lemma \ref{incDeltaLemma}. As the map $F'\to F\te \cI_Z$ is an isomorphism near $Z$, we see that $F'$ is actually of the form $F''\te \cI_Z$ for some subsheaf $F''\subset F$, and there is an exact sequence $$0\to F''\to F\to \OO_W\to 0.$$ In particular, if $F\to \OO_W$ is a general map with kernel $F''$, then $F''\te \cI_Z$ is acyclic.  Moreover, $F''$ is prioritary since $F$ is, and $\mu(F'')=\mu(F)=\mu'$.
\end{proof}

\begin{corollary}
If $Z$ is a zero-dimensional scheme, the set of slopes of bundles with interpolation for $Z$ is either a closed interval $[\mu, \infty) \cap \Q$ with $\mu$ rational or an open interval $(\mu,\infty)\cap \Q$ (potentially with $\mu$ irrational).  The discriminant of a bundle with interpolation is determined from the slope by the requirement $\chi(E\te \cI_Z)=0$. 
\end{corollary}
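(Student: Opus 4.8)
The plan is to deduce the corollary directly from Theorem \ref{bumpingUpThm} together with Corollary \ref{assumeStableCor}. Let $S\subset \Q$ denote the set of slopes $\mu'$ for which there exists a vector bundle of slope $\mu'$ satisfying interpolation for $Z$, and let $S_1\subset \Q$ denote the larger set of slopes for which there exists a bundle $E$ with merely $H^1(E\te\cI_Z)=0$. The first observation is that $S=S_1$: if $E$ has slope $\mu'$ and $H^1(E\te\cI_Z)=0$, then by Corollary \ref{assumeStableCor} there is a stable bundle $E'$ of slope $\le\mu'$ with $H^1(E'\te\cI_Z)=0$, and then Theorem \ref{bumpingUpThm} (applied with the target slope $\mu'$) produces a prioritary bundle of slope exactly $\mu'$ with interpolation for $Z$. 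Hence $S=S_1$, and Theorem \ref{bumpingUpThm} shows that $S$ is an up-set: if $\mu'\in S$ then $[\mu',\infty)\cap\Q\subset S$.

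Second, I would check that $S$ is nonempty. A bundle of the form $\OO(m)^{\oplus r}$ for $m\gg 0$ (equivalently, $\OO(m)$ twisted appropriately) satisfies $H^i(\OO(m)\te\cI_Z)=0$ for $i>0$ once $m$ is large, since $Z$ is zero-dimensional; indeed one can take $m$ large enough that $\cI_Z(m)$ is globally generated with vanishing higher cohomology, and then a direct sum of suitable twists achieves $\chi=0$ and full acyclicity. So $S\neq\emptyset$. An up-set of rationals that is bounded below has an infimum $\mu_0=\inf S\in\R$, and $S$ is either $[\mu_0,\infty)\cap\Q$ (when $\mu_0\in S$, which forces $\mu_0\in\Q$) or $(\mu_0,\infty)\cap\Q$ (when $\mu_0\notin S$), using again that $S$ is an up-set to fill in all rationals strictly above $\mu_0$. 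This gives the stated dichotomy.

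Finally, the statement about the discriminant is immediate: if $E$ satisfies interpolation for $Z$ then $\chi(E\te\cI_Z)=0$, and by the Riemann--Roch formula of \S\ref{sec-prelim-stable}, $\chi(E\te\cI_Z)=r(E)\bigl(P(\mu(E))-\deg Z-\Delta(E)\bigr)$ depends only on $r(E),\mu(E),\Delta(E)$ and $\deg Z$; since this must vanish and is linear in $\Delta(E)$ with nonzero leading coefficient $-r(E)$, the value of $\Delta(E)$ is uniquely determined by $\mu(E)$. I do not foresee a serious obstacle here; the only mild subtlety is making sure the dichotomy is stated correctly, namely that the endpoint $\mu_0$ belongs to $S$ precisely when $S$ is the closed interval, and that in that case $\mu_0$ is rational because it is then itself the slope of an actual bundle.
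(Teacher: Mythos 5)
Your proof is correct and follows essentially the same route as the paper: Theorem \ref{bumpingUpThm} makes the set of interpolating slopes an up-set, nonemptiness comes from $\OO_{\P^2}(m)$ for $m\gg 0$, and Riemann--Roch pins down the discriminant. The only detail worth adding is that the up-set is bounded below (so the infimum exists), which is Lemma \ref{interpOnePointLem}(1): any bundle with $H^1(E\te\cI_Z)=0$ for nonempty $Z$ has $\mu(E)\geq 0$.
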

\begin{proof}
Everything follows immediately from the theorem except for the statement that the set is actually non-empty.  For non-emptiness, we only need to produce a sheaf $E$ with $H^1(E\te \cI_Z)=0$, and $E = \OO_{\P^2}(n)$ for large $n$ will do.
\end{proof}

\begin{remark}
In the cases we study in this paper, it will always be the case that the interval in the previous corollary takes the form $[\mu ,\infty)\cap \Q$ with $\mu$ rational.  We expect the set of slopes of bundles with interpolation for $Z$ to always be a closed interval $[\mu, \infty) \cap \Q$ with $\mu$ rational as a consequence of the fact that $\PP^{2[n]}$ is a Mori dream space. This would follow if given a linear system $|D|$ not containing $Z$ in its stable base locus, we could always find a $\Theta$-divisor in $|mD|$, for some $m>0$, not containing $Z$.  Since we do not know whether this interval is always closed, we make the following definition.
\end{remark}

\begin{definition}
The \emph{minimal interpolating slope} $\mu_{\min}^\perp(\cI_Z)$ of a zero-dimensional scheme $Z$ is the infimum of the set of slopes of bundles with interpolation for $Z$.
\end{definition}

\begin{remark}
It can in fact happen that there are $F\in \cP(\xi)$ with interpolation for $Z$ but that $\cM(\xi)$ is empty.  For example, consider the case where $Z=p$ is a point.  If $W$ is a general collection of $2$ points, then the sheaf $$F = \OO_{\P^2}^3 \oplus \cI_W(1)$$ is prioritary of slope $1/4$ and acyclic to $\cI_p$.  But $\Delta(F) = \frac{13}{32}$, while any semistable bundle of slope $1/4$ has discriminant at least $\frac{21}{32}$ by Theorem \ref{stableClassification}.  More generally, a stable bundle of slope $\mu$ with interpolation for $p$ exists if $\mu \in \{0\} \cup [\frac{1}{3},\infty)$, while prioritary bundles with interpolation for $p$ exist for all $\mu\geq 0$.  Any prioritary bundle with slope $0 \leq \mu < \frac{1}{3}$ that has interpolation for $p$ must have $\OO_{\P^2}$ as a factor in its Harder-Narasimhan filtration.
\end{remark}

With this new language, the main theorem of \cite{HuizengaPaper2} on the cone of effective divisors on $\P^{2[n]}$ can be restated as follows.

\begin{theorem}[\cite{HuizengaPaper2}]
If $Z\in \P^{2[n]}$ is a general collection of points, then $\mu_{\min}^\perp(\cI_Z)$ is the minimum nonnegative slope of a stable vector bundle $E$ with $\chi(E\te \cI_Z)=0$.  Furthermore, the cone of effective divisors of $\P^{2[n]}$ is spanned by $$\mu_{\min}^\perp(\cI_Z)H-\frac{1}{2}B \qquad \textrm{and} \qquad B.$$
\end{theorem}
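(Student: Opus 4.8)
The plan is to reduce the statement to the main theorem of \cite{HuizengaPaper2}, of which it is merely a reformulation in the language introduced in this section. First I would fix notation: let $\mu_Z$ denote the minimum nonnegative slope of a stable vector bundle $E$ with $\chi(E\te\cI_Z)=0$. By the Riemann--Roch computation $\chi(E\te\cI_Z)=r(E)\bigl(P(\mu(E))-\Delta(E)-n\bigr)$, a stable bundle of slope $\mu$ with $\chi(E\te\cI_Z)=0$ exists precisely when $\delta(\mu)\le P(\mu)-n$, where $\delta$ is the Drezet--Le Potier function of Theorem \ref{stableClassification}; in particular this minimum is attained at a rational number, so $\mu_Z$ is well-defined. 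The content I would extract from \cite{HuizengaPaper2} is twofold: for a general $Z\in\P^{2[n]}$, a general stable bundle of slope $\mu_Z$ with $\chi(E\te\cI_Z)=0$ has interpolation for $Z$; and the effective cone of $\P^{2[n]}$ is spanned by $\mu_Z H-\frac{1}{2}B$ and $B$. Granting these, everything reduces to proving the identity $\mu_{\min}^\perp(\cI_Z)=\mu_Z$, after which the effective cone statement follows by substitution.

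The inequality $\mu_{\min}^\perp(\cI_Z)\le\mu_Z$ is immediate, since \cite{HuizengaPaper2} provides an actual interpolating bundle of slope $\mu_Z$, so $\mu_Z$ lies in the set whose infimum is $\mu_{\min}^\perp(\cI_Z)$. For the reverse inequality I would take an arbitrary vector bundle $E$ with interpolation for $Z$ and argue $\mu(E)\ge\mu_Z$ as follows. Lemma \ref{interpOnePointLem} gives $\mu(E)\ge 0$. Applying Corollary \ref{assumeStableCor} to the vanishing $H^1(E\te\cI_Z)=0$ produces a stable bundle $E'$ with $0\le\mu(E')\le\mu(E)$ and $H^1(E'\te\cI_Z)=0$; Lemma \ref{interpOnePointLem} then also gives $H^2(E'\te\cI_Z)=0$, so $\chi(E'\te\cI_Z)=h^0(E'\te\cI_Z)\ge 0$, i.e. $\Delta(E')\le P(\mu(E'))-n$ by Riemann--Roch. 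Since $E'$ is stable, Theorem \ref{stableClassification} gives $\Delta(E')\ge\delta(\mu(E'))$, so $\delta(\mu(E'))\le P(\mu(E'))-n$, whence $\mu(E')\ge\mu_Z$ by the definition of $\mu_Z$, and therefore $\mu(E)\ge\mu_Z$.

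The main obstacle is not visible in this translation because it was already resolved in \cite{HuizengaPaper2}: showing that a minimal-slope stable bundle with $\chi(E\te\cI_Z)=0$ genuinely has \emph{all} the cohomology of $E\te\cI_Z$ vanishing for general $Z$, rather than merely $\chi(E\te\cI_Z)=0$, is the substantive part, and I would simply cite it. Within the present argument the only point requiring care is that the passage from an arbitrary interpolating bundle to a stable one via Corollary \ref{assumeStableCor} cannot accidentally reach a slope below $\mu_Z$; this is exactly what the Drezet--Le Potier bound $\Delta\ge\delta(\mu)$ rules out in the computation above.
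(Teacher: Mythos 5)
The paper offers no proof of this statement beyond the citation: it is a restatement of the main theorem of \cite{HuizengaPaper2} in the language of $\mu_{\min}^\perp$, so your plan of reducing to that reference and supplying only the translation is exactly what the paper does, and your identification of the two inputs to be quoted (interpolation for general $Z$ by a stable bundle of the minimal admissible slope, and the computation of the effective cone) is correct.

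One caveat in your supplementary verification that $\mu(E')\geq\mu_Z$: Theorem \ref{stableClassification} as stated only gives $\Delta\geq\delta(\mu)$ when the moduli space $M(\xi)$ is \emph{positive-dimensional}, whereas exceptional bundles are stable with $\Delta<\delta(\mu)$ (indeed $\Delta = \frac{1}{2}(1-\frac{1}{r^2})$). So the inference ``$E'$ stable $\Rightarrow\Delta(E')\geq\delta(\mu(E'))$'' is not literally licensed, and for the same reason your characterization of $\mu_Z$ by the inequality $\delta(\mu)\leq P(\mu)-n$ is only correct up to the exceptional cases. In the case $\chi(E'\te\cI_Z)=0$ this is harmless ($\mu(E')$ lies in the defining set of $\mu_Z$ directly), but when $\chi(E'\te\cI_Z)>0$ and $E'$ is exceptional your chain of inequalities has a genuine hole; this is precisely the delicate analysis carried out in \cite{HuizengaPaper2}, so it is safest to cite that reference for the equivalence rather than rederive it from Theorem \ref{stableClassification}. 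The remainder of your argument (the use of Corollary \ref{assumeStableCor} and Lemma \ref{interpOnePointLem} to reduce to a stable bundle with $\chi(E'\te\cI_Z)=h^0(E'\te\cI_Z)\geq 0$) is correct.
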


We will also need a couple similar results where the role of $\cI_Z$ is replaced by a pure $1$-dimensional sheaf.  The proofs are considerably easier.

\begin{proposition}\label{rank0stable}
Let $G$ be a pure $1$-dimensional sheaf, and suppose $E$ is a vector bundle such that $E\te G$ is acyclic.  Then $E$ and $G$ are semistable.
\end{proposition}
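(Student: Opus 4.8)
The plan is to argue by contradiction in both cases, playing a single cohomological observation against one Riemann--Roch identity. The observation: whenever $0\to A\to E\te G\to B\to 0$ is exact, $H^0(A)$ injects into $H^0(E\te G)=0$ while $H^2(A)=0$ automatically (as $A$ is supported in dimension $\le 1$), so $\chi(A)=-h^1(A)\le 0$. The identity: write $\ch(G)=cL+dL^2$, so $c>0$ since $G$ is $1$-dimensional, and put $\mu=\mu(E)$, $r=\rk E$. For any sheaf $G'$ with $\ch(G')=c'L+d'L^2$, local freeness of $E$ gives $\ch(E\te G')=\ch(E)\ch(G')$, and Riemann--Roch then yields $\chi(E\te G')=rc'\big(\tfrac{d'}{c'}+\mu+\tfrac32\big)$. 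Taking $G'=G$ and using $\chi(E\te G)=0$ (valid because $E\te G$ is acyclic) forces $\mu+\tfrac32=-\tfrac{d}{c}$, so for every $1$-dimensional sheaf $G'$ one has $\chi(E\te G')=rc'\big(\tfrac{d'}{c'}-\tfrac{d}{c}\big)$.

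To prove $G$ semistable, suppose not. Purity of $G$ produces a nonzero $G'\subset G$ with $p_{G'}>p_G$, and purity also forces $\dim G'=1$, hence $c'=\ch_1(G')>0$. Since the reduced Hilbert polynomial of such a $G'$ is $m+\tfrac{d'}{c'}+\tfrac32$, the destabilizing inequality says precisely that $\tfrac{d'}{c'}>\tfrac{d}{c}$. Tensoring $0\to G'\to G$ by the locally free sheaf $E$ remains exact, so the observation gives $\chi(E\te G')\le 0$; but the identity gives $\chi(E\te G')=rc'\big(\tfrac{d'}{c'}-\tfrac{d}{c}\big)>0$, a contradiction.

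To prove $E$ is ($\mu$-)semistable, suppose not, and let $F\subset E$ be the maximal destabilizing subsheaf, so $\mu(F)>\mu(E)$ and $Q:=E/F$ is torsion-free. The obstacle is that $Q$ may fail to be locally free exactly along $\Supp G$, so $0\to F\te G\to E\te G\to Q\te G\to 0$ need not be exact. I would clear this the way Theorem \ref{semistableThm} is proved: since $E$ is locally free, the sheaves $g^\ast E\te G$ form a flat family over $\Aut\P^2$, acyclicity is an open condition, and it holds at $g=\id$, so $g^\ast E\te G$ is acyclic for general $g$. For such $g$, $g^\ast F$ is the maximal destabilizing subsheaf of $g^\ast E$, and the non-locally-free locus of $g^\ast Q$ is a general translate of the (finite, since $Q$ is torsion-free on a smooth surface) non-locally-free locus of $Q$, hence disjoint from the fixed curve $\Supp G$. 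Replacing $(E,F,Q)$ by $(g^\ast E,g^\ast F,g^\ast Q)$, we may assume $Q$, and therefore $F$, is locally free near $\Supp G$; then $\mathrm{Tor}_1(Q,G)=0$, the three-term sequence is exact, and the observation gives $\chi(F\te G)\le 0$. But $F$ is locally free near $\Supp G$, so Riemann--Roch applies and gives $\chi(F\te G)=\rk(F)\,c\,(\mu(F)-\mu(E))>0$ -- a contradiction. (The same contradiction can be read off $Q\te G$, whose Euler characteristic equals $\rk(Q)\,c\,(\mu(Q)-\mu(E))<0$ yet must be $\ge 0$ by the vanishing of $H^1(E\te G)$.)

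I expect the only genuine obstacle to be the possible non-exactness of $0\to F\te G\to E\te G\to Q\te G\to 0$, i.e.\ the $\mathrm{Tor}_1(Q,G)$ term supported where the Harder--Narasimhan quotient $Q$ degenerates along $\Supp G$; the automorphism trick of Theorem \ref{semistableThm} removes it, and the remaining content is routine Riemann--Roch. Note that this argument detects only $\mu$-semistability of $E$, since $\ch(F\te G)$ does not see the discriminant $\Delta(F)$; that is exactly what is needed here.
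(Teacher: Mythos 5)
Your proposal is correct and follows essentially the same route as the paper: reduce to the destabilizing sub/quotient, use a general automorphism of $\P^2$ to move the non-locally-free loci off $\Supp G$ so the tensored sequence stays exact, and then play the sign of the Euler characteristic forced by acyclicity against the Riemann--Roch formula. The paper phrases the contradiction via $\chi(Q\te G)\geq 0$ and leaves the semistability of $G$ to ``a similar argument,'' which you have simply written out in full (correctly noting that only $\mu$-semistability of $E$ is obtained, exactly as in the paper's own argument).
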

\begin{proof}
As in the proof of Theorem \ref{semistableThm}, consider an exact sequence $$0\to F \to E \to Q \to 0.$$ By applying a general automorphism of $\P^2$, we may assume the singularities of these sheaves avoid the support of $G$.  We obtain an exact sequence $$0\to F \te G \to E \te G \to Q \te G \to 0.$$ Since $E\te G$ is acyclic and the sheaves are $1$-dimensional, we have $\chi(Q\te G) \geq 0$.  Riemann-Roch shows that this is equivalent to $\mu(Q)\geq \mu(E)$, so $E$ is semistable.  A similar argument shows $G$ is semistable.
\end{proof}

\begin{theorem}\label{rank0bumpUp}
Let $F$ be a semistable pure 1-dimensional sheaf, and suppose $E$ is a prioritary bundle such that $E\te F$ is acyclic.  Let $(r,\mu,\Delta)$ be the numerical invariants of $E$, and fix a rational number $\Delta'\geq \Delta$.  If $r'$ is sufficiently divisible, then $E'\te F$ is acyclic for a general prioritary bundle $E'\in \cP(r',\mu,\Delta')$.
\end{theorem}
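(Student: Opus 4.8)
The plan is to first produce a single prioritary sheaf with the prescribed invariants $\xi=(r',\mu,\Delta')$ that is acyclic against $F$, and then to propagate acyclicity to the general prioritary bundle by a semicontinuity argument.

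I would begin with a numerical remark. Since $\ch_0(F)=0$, the Euler characteristic $\chi(G\te F)=\int_{\P^2}\ch(G)\,\ch(F)\,\Td(\P^2)$ of a vector bundle $G$ against $F$ depends only on $\rk(G)$ and $\mu(G)$ (and on $F$), not on $\Delta(G)$. As $E\te F$ is acyclic, $\chi(E\te F)=0$, and therefore $\chi(E'\te F)=0$ for every bundle $E'$ of slope $\mu$. Because $E'\te F$ has dimension at most $1$ it has vanishing $H^2$, so for such $E'$ acyclicity of $E'\te F$ amounts to the single condition $H^0(E'\te F)=0$; in particular, over any flat family of bundles of slope $\mu$, acyclicity against $F$ is an open condition.

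Next I would build one example. Writing $\Delta'-\Delta=a/b$ in lowest terms, take $r'$ to be a sufficiently large common multiple of $r$ and $b$ (so in particular $r'\ge 2$); put $m=r'/r$ and $k=r'(\Delta'-\Delta)\in\Z_{\ge0}$. Start from $E_0=E^{\oplus m}$, which is locally free, semistable (by Proposition \ref{rank0stable}), prioritary of slope $\mu$ and discriminant $\Delta$, with $E_0\te F=(E\te F)^{\oplus m}$ acyclic. Then perform $k$ successive elementary modifications at general points $p_1,\dots,p_k$ lying off $\Supp(F)$: for $G$ torsion-free and locally free at a general $p\notin\Supp(F)$, a general surjection $G\to\OO_p$ has torsion-free kernel $G'$ with $\ch(G')=\ch(G)-L^2$, so $\mu$ is unchanged and $\Delta$ increases by $1/\rk$. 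Two points need checking along the way: (i) prioritariness is preserved --- since $E_0$ is semistable one has $\Hom(E_0,E_0(-2))=0$, and applying $\Hom(-,G(-2))$ to $0\to G'\to G\to\OO_p\to 0$, together with $\Hom(\OO_p,G(-2))=\Ext^1(\OO_p,G(-2))=0$ for $G$ locally free at $p$, carries the vanishing $\Hom(-,-(-2))=0$ down the whole chain, whence each term is prioritary by Serre duality (alternatively, this is the prioritary-preservation statement recorded before Lemma \ref{incDeltaLemma}, whose proof does not use the auxiliary hypothesis $h^0>0$); and (ii) acyclicity against $F$ is preserved --- since $p_i\notin\Supp(F)$ we have $\OO_{p_i}\te F=\operatorname{Tor}_1^{\OO}(\OO_{p_i},F)=0$, so $G'\te F\cong G\te F$ at each step. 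The resulting sheaf $E''$ is then prioritary with invariants $\xi$, is locally free on a neighborhood of $\Supp(F)$, and satisfies $E''\te F\cong(E\te F)^{\oplus m}$, which is acyclic; in particular $\cP(\xi)\neq\emptyset$.

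Finally I would deform $E''$. Let $\cP'(\xi)\subseteq\cP(\xi)$ be the open substack of prioritary sheaves locally free on a neighborhood of $\Supp(F)$: it contains $E''$, it is irreducible (open in the irreducible stack $\cP(\xi)$ of Theorem \ref{prioritaryThm}(1)), and it is dense in $\cP(\xi)$ because the general prioritary sheaf is locally free by Theorem \ref{prioritaryThm}(3). Over $\cP'(\xi)$ the universal sheaf is locally free near $\Supp(F)\times\cP'(\xi)$, so its tensor with $p^\ast F$ is flat over $\cP'(\xi)$; by the first paragraph the locus of $\cP'(\xi)$ on which this tensor is fiberwise acyclic is therefore open, and it is nonempty since it contains $[E'']$. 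Hence it is dense open in $\cP'(\xi)$, so dense open in $\cP(\xi)$, and it meets the locally free locus --- so the general (locally free) prioritary bundle $E'$ with invariants $(r',\mu,\Delta')$ is acyclic against $F$. I expect the last step to be the only real obstacle: on the full stack $\cP(\xi)$ non-locally-free sheaves occur and $\te F$ is not flat, so one has to restrict to $\cP'(\xi)$ first, after which the flatness --- and with it the semicontinuity --- is automatic.
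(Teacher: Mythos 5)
Your proposal is correct and follows essentially the same route as the paper: both produce the required bundle by elementary modifications $0\to E'\to E\to \OO_W\to 0$ at general points away from $\Supp(F)$, which raise $\Delta$ by $1/r'$ per point while leaving $E\te F$ unchanged. Your additional checks (preservation of prioritariness, the Tor-vanishing for points off $\Supp(F)$, and the semicontinuity argument on the open substack of sheaves locally free near $\Supp(F)$ to pass from one example to the general prioritary bundle) are details the paper leaves implicit, and they are carried out correctly.
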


The only reason for potentially changing the rank is to ensure that $\chi(E')= r'(P(\mu)-\Delta')$ is an integer. 

\begin{proof}
Let $r'=kr$ be a multiple of $r$ such that $r'(P(\mu)-\Delta')$ is an integer.  Replacing $E$ be $E^k$ we may as well assume $r'=r$, so $r(P(\mu)-\Delta)$ and $r(P(\mu)-\Delta')$ are both integers.  Then $$\Delta' - \Delta = \frac{w}{r}$$ for some integer $w\geq 0$.  If $E\to \OO_W$ is a general map with $W$ a general zero-dimensional scheme of length $w$ and $$0\to E'\to E \to \OO_W\to 0$$ then $E'\te F\cong E\te F$ but $\Delta(E') = \Delta(E) + \frac{w}{r} = \Delta'$.
\end{proof}

If $F$ is a pure $1$-dimensional sheaf, then any bundle $E$ with $E\te F$ acyclic has slope determined by the requirement $\chi(E\te F)=0$.  The theorem shows that the admissible discriminants of such bundles form a ray. In the cases under consideration in this paper, it will always be a closed ray of the form $[\Delta,\infty)\cap \Q$, with $\Delta$ rational.

\begin{definition} The \emph{minimal interpolating discriminant} $\Delta_{\min}^\perp(F)$ of a semistable pure 1-dimensional sheaf $F$ is the infimum of the set of discriminants of bundles $E$ with $E \te F$ acyclic.
\end{definition}

\section{Correspondence between Bridgeland walls and interpolation}\label{sec-correspondence}

In this section, we demonstrate the correspondence between the geometry of a Bridgeland wall and the numerical invariants of a vector bundle orthogonal to the objects defining the wall.  The center of the wall corresponds to the slope of the orthogonal object, while the radius corresponds to the discriminant.

\begin{proposition}\label{orthogonalInvariantsProp}
Let $\xi_1,\xi_2\in K(\P^2)$ be linearly independent Chern characters with either rank $0$ or nonnegative discriminant.  Suppose $\zeta = (r,\mu,\Delta)$ is a Chern character with $r\neq 0$ and $\Delta> -\frac{1}{8}$, and $$\chi(\zeta^\ast,\xi_1) = \chi(\zeta^\ast,\xi_2)=0.$$ Then the wall $W(\xi_1,\xi_2)$ is semicircular, with center $(s,0)$ and radius $\rho$ given by $$\mu = -s-\frac{3}{2} \qquad \textrm{and} \qquad 2\Delta = \rho^2-\frac{1}{4}.$$ 

Conversely, if the nonempty semicircular wall $W(\xi_1,\xi_2)$ has center $(s,0)$ and radius $\rho$, then up to scale there is a unique $\zeta$ as above.
\end{proposition}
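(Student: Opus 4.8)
The plan is to reduce the entire statement to one algebraic identity expressing the Euler pairing $\chi(\zeta^\ast,-)$ as the real part of a Bridgeland central charge at a specific point of the $(s,t)$-half-plane, after which the geometry of the wall follows from the classification of potential walls recalled in \S\ref{sec-prelim-Bridge}. For the identity, write $\zeta=(r,\mu,\Delta)$ with $r\neq 0$ and $\Delta>-\tfrac18$, and set $$s_0=-\mu-\tfrac32,\qquad t_0=\sqrt{2\Delta+\tfrac14}>0 .$$ Using $\chi(\zeta^\ast,\eta)=\int_{\P^2}\zeta\cdot\eta\cdot\Td(\P^2)$ (the derived dual being an involution) with $\Td(\P^2)=1+\tfrac32 L+L^2$, together with $\cZ_{s,t}(\eta)=-\int_{\P^2}e^{-(s+it)L}\,\eta$, a direct computation in the Chern-character coefficients of $\eta$ shows that for every $\eta\in K(\P^2)$, $$\chi(\zeta^\ast,\eta)=-r\cdot\operatorname{Re}\bigl(\cZ_{s_0,t_0}(\eta)\bigr).$$ The constants $\tfrac32$ and $\tfrac18$ appearing here are exactly what absorbs the Todd class of $\P^2$, and the inequality $\Delta>-\tfrac18$ is precisely the condition that $t_0$ be a positive real number, so that $(s_0,t_0)$ is a genuine point of the upper half-plane. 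I would also record the companion formula $\operatorname{Re}(\cZ_{s,t}(\eta))=\tfrac{r'}{2}\bigl(2\Delta'+t^2-(\mu'-s)^2\bigr)$ for $\eta=(r',\mu',\Delta')$ of nonzero rank, whose vanishing says exactly that $(s,t)$ lies on the potential Bridgeland wall for $\eta$ with center $(s,0)$.

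For the forward direction, suppose $\chi(\zeta^\ast,\xi_1)=\chi(\zeta^\ast,\xi_2)=0$. By the identity and $r\neq 0$, $\operatorname{Re}(\cZ_{s_0,t_0}(\xi_i))=0$ for $i=1,2$. A short case analysis — using linear independence of $\xi_1,\xi_2$ and the hypothesis that each has rank $0$ or nonnegative discriminant — shows $\operatorname{Im}(\cZ_{s_0,t_0}(\xi_i))\neq 0$ for each $i$; in particular not both $\xi_i$ can have rank $0$, so after relabeling $\xi_1$ has nonzero rank, and the companion formula forces $(\mu(\xi_1)-s_0)^2=2\Delta(\xi_1)+t_0^2>0$, hence $\mu(\xi_1)\neq s_0$. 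Since the real parts vanish and the imaginary parts do not, $\mu_{s_0,t_0}(\xi_1)=\mu_{s_0,t_0}(\xi_2)=0$, i.e. $(s_0,t_0)\in W(\xi_1,\xi_2)$. Now $W(\xi_1,\xi_2)$ is a potential wall for $\xi_1$; since $\Delta(\xi_1)\geq 0$ the potential walls for $\xi_1$ foliate the half-plane, and since $\mu(\xi_1)\neq s_0$ the wall through $(s_0,t_0)$ is not the vertical one. But the companion formula says precisely that $(s_0,t_0)$ lies on the potential wall for $\xi_1$ with center $(s_0,0)$ and radius $t_0$, so by uniqueness $W(\xi_1,\xi_2)$ equals that semicircle. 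Thus its center is $(-\mu-\tfrac32,0)$ and its radius $\rho$ satisfies $\rho^2=2\Delta+\tfrac14$, which are the asserted relations.

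For the converse, given the nonempty semicircle $W(\xi_1,\xi_2)$ with center $(s,0)$ and radius $\rho>0$, set $\zeta_0=\bigl(1,\,-s-\tfrac32,\,\tfrac{\rho^2}{2}-\tfrac18\bigr)$, so that $\Delta(\zeta_0)>-\tfrac18$ and the point attached to $\zeta_0$ by the identity is $(s_0,t_0)=(s,\rho)$. After relabeling, $\xi_1$ has nonzero rank, and $W(\xi_1,\xi_2)$, being a potential wall for $\xi_1$ centered at $(s,0)$ with radius $\rho$, satisfies $\operatorname{Re}(\cZ_{s,\rho}(\xi_1))=0$ by the companion formula, hence $\chi(\zeta_0^\ast,\xi_1)=0$; if $\xi_2$ has rank $0$, replace it by $\xi_1+\xi_2$ (same wall, nonzero rank) and conclude $\chi(\zeta_0^\ast,\xi_2)=0$ by linearity of $\chi(\zeta_0^\ast,-)$. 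For uniqueness up to scale, the Euler pairing on $K(\P^2)$ is non-degenerate and $\zeta\mapsto\zeta^\ast$ is an automorphism, so $\{\zeta:\chi(\zeta^\ast,\xi_1)=\chi(\zeta^\ast,\xi_2)=0\}$ is one-dimensional; any element of nonzero rank is a nonzero multiple of $\zeta_0$, and rescaling changes neither $\mu$ nor $\Delta$, hence preserves all the conditions.

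The single nontrivial step is the identity in the first paragraph: spotting the substitution $s_0=-\mu-\tfrac32$, $t_0=\sqrt{2\Delta+\tfrac14}$ under which $\chi(\zeta^\ast,-)$ becomes $-r\operatorname{Re}(\cZ_{s_0,t_0}(-))$, and seeing that the admissibility of $\Delta$ is exactly the positivity of $t_0$. Everything afterwards is bookkeeping with the foliation of the half-plane by the potential walls of $\xi_1$, together with the routine edge-case checks needed to handle rank-$0$ Chern characters and to exclude the vertical wall.
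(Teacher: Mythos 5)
Your proposal is correct and follows essentially the same route as the paper: the core step in both is the substitution $s_0=-\mu-\tfrac32$, $t_0=\sqrt{2\Delta+\tfrac14}$ identifying $\chi(\zeta^\ast,-)$ with $-r\,\Re(\cZ_{s_0,t_0}(-))$, followed by the foliation of the half-plane by potential walls for $\xi_1$ and the check that the imaginary parts do not vanish (where $\Delta>-\tfrac18$ and $\Delta_i\geq 0$ enter). The only divergence is in the uniqueness step of the converse, where you invoke non-degeneracy of the Euler pairing on $K(\P^2)$ while the paper intersects two translated parabolas in the $(\mu,\Delta)$-plane; both are valid, and your handling of rank-$0$ classes directly via $\Im(\cZ_{s_0,t_0})=t_0c$ is an acceptable substitute for the paper's reduction to $\xi_1+k\xi_2$.
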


We begin with an elementary lemma.

\begin{lemma}\label{realPart0lemma}
Let $\xi$ be a nonzero Chern character, and let $(s_0,t_0)$ be a point in the Bridgeland plane.  Then $$\Re(\cZ_{s_0,t_0}(\xi))=0$$ if and only if there is a semicircular potential wall for $\xi$ with center $(s_0,0)$ and radius $t_0$.\end{lemma}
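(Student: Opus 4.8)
The plan is to simply write out $\Re(\cZ_{s_0,t_0}(\xi))$ explicitly and recognize the equation of the relevant semicircle. Recall from the formula for the central charge that if $\xi$ has nonzero rank $r$ and invariants $(r,\mu,\Delta)$, then
$$\cZ_{s,t}(\xi) = -\tfrac{1}{2}r\big((\mu-s)^2 - t^2 - 2\Delta\big) + irt(\mu-s),$$
so $\Re(\cZ_{s_0,t_0}(\xi)) = -\tfrac{1}{2}r\big((\mu - s_0)^2 - t_0^2 - 2\Delta\big)$. Since $r\neq 0$, this vanishes precisely when $(s_0 - \mu)^2 + t_0^2 = 2\Delta + (s_0-\mu)^2 \cdot 0$... more precisely when $(\mu - s_0)^2 - t_0^2 = 2\Delta$, i.e. when $t_0^2 = (s_0 - \mu)^2 - 2\Delta$. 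By the description of potential Bridgeland walls recalled in \S\ref{sec-prelim-Bridge}, this is exactly the condition that $(s_0,t_0)$ lies on the semicircular potential wall for $\xi$ with center $(s_0,0)$ — wait, that is automatic — the content is that there \emph{is} such a wall with center $(s_0,0)$ and radius $t_0$, which requires $t_0^2 = \rho^2$ where $\rho^2 = (s_0-\mu)^2 - 2\Delta$ is the (possibly virtual) radius squared of the unique potential wall centered at $(s_0,0)$. So the rank-nonzero case is immediate.

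Next I would handle the rank-zero case. If $\xi = cL + dL^2$ with $c\neq 0$, then expanding $\cZ_{s,t}(\xi) = -\int e^{-(s+it)L}\ch(\xi)$ gives $\cZ_{s,t}(\xi) = -d + c(s+it) = (cs - d) + i\,ct$, so $\Re(\cZ_{s_0,t_0}(\xi)) = cs_0 - d$, which vanishes iff $s_0 = d/c$. By the discussion of rank-$0$ potential walls, the potential Bridgeland walls for $\xi$ are exactly the concentric semicircles centered at $(d/c, 0)$, so there is a semicircular potential wall with center $(s_0,0)$ and radius $t_0$ (namely the one through $(s_0,t_0)$) precisely when $s_0 = d/c$. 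If $c = 0$ as well, i.e. $\xi = dL^2$ is supported in dimension $0$, then $\cZ_{s,t}(\xi) = -d$ is a nonzero real constant (as $\xi\neq 0$), so $\Re(\cZ_{s_0,t_0}(\xi))\neq 0$ always; correspondingly there are no potential walls for such $\xi$, so both sides fail and the equivalence holds vacuously. This exhausts all cases.

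I do not anticipate a genuine obstacle here — the lemma is essentially unwinding the definitions. The only point requiring mild care is bookkeeping around virtual radii: a priori the equation $t_0^2 = (s_0-\mu)^2 - 2\Delta$ could force the right-hand side to be negative, but since $t_0 > 0$ in the Bridgeland plane this cannot happen when $\Re(\cZ) = 0$, and conversely the hypothesis "there is a semicircular potential wall with center $(s_0,0)$ and radius $t_0$" already presupposes the radius is a genuine positive real. So the statement is clean in both directions once one is careful to interpret "wall with radius $t_0$" as a nonempty (non-virtual) wall. I would present the proof as a short case analysis on the rank of $\xi$, citing the explicit central charge formula and the classification of potential walls from \S\ref{sec-prelim-Bridge} in each case.
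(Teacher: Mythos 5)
Your proof is correct and follows essentially the same route as the paper: write out $\Re(\cZ_{s_0,t_0}(\xi)) = -\tfrac{1}{2}r((s_0-\mu)^2-t_0^2-2\Delta)$ and match $t_0^2=(s_0-\mu)^2-2\Delta$ against the formula for the radius of the unique potential wall centered at $(s_0,0)$. The only difference is that the paper explicitly omits the rank-zero cases as easy, whereas you carry them out; your handling of those cases is also correct.
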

\begin{proof}
We will only need the case where $\xi$ has nonzero rank, so we omit the other easy cases.

Suppose $\xi=  (r,\mu,\Delta)$ with $r\neq 0$.  
We have $$\Re \left(\cZ_{s_0,t_0}(\xi)\right) = -\frac{1}{2}r((s_0-\mu)^2-t_0^2-2\Delta),$$ so if $\Re(\cZ_{s_0,t_0}(\xi))=0,$ then $|s_0-\mu|> 2\Delta$ and $$t_0 = \sqrt{(s_0-\mu)^2 - 2\Delta}.$$  There is a unique semicircular potential wall with center $(s_0,0)$, and its radius is $t_0$ (see \S \ref{sec-prelim-Bridge}).
\end{proof}

\begin{proof}[Proof of Proposition \ref{orthogonalInvariantsProp}]
$(\Rightarrow)$ If both $\xi_i$ have rank $0$, there is no $\zeta$ which gives $\chi(\zeta^\ast,\xi_1)=\chi(\zeta^\ast,\xi_2)=0$ unless $\xi_1,\xi_2$ are linearly dependent.  We, therefore, assume $\xi_1$ has nonzero rank.  

Next, we reduce to the case where $\xi_2$ also has nonzero rank.  If say $\xi_2 = (0,\ch_1,\ch_2) = (0,c,d)$, then we must have $c \neq 0$ since $\chi(\zeta^*,\xi_2) = 0$ and $\xi_2 \neq 0$.  Consider the Chern characters $\xi_1,\xi_1+k\xi_2$, where $k$ is a large integer.  Then $W(\xi_1,\xi_2) = W(\xi_1,\xi_1+k\xi_2)$ and $\chi(\zeta^*,\xi_1+ k \xi_2)=0$.  The discriminant of $\xi_1+k\xi_2$ is also nonnegative for large $k$ since it grows like $C k^2 c^2$ for some constant $C>0$.  Therefore, we may replace $\xi_2$ by $\xi_1+k\xi_2$ and prove the result for $\xi_1,\xi_1+k\xi_2$ instead.

 Assuming the ranks of the $\xi_i$ are nonzero, write $\xi_i = (r_i,\mu_i,\Delta_i)$.  Put $$s_0 = -\mu-\frac{3}{2} \qquad\textrm{and}\qquad t_0= \sqrt{2\Delta+\frac{1}{4}}.$$ We claim $\mu_{s_0,t_0}(\xi_i)=0$.  We compute \begin{eqnarray*} \Re \left(\cZ_{s_0,t_0}(\xi_i)\right) &=& -\frac{1}{2}r_i((\mu_i-s_0)^2-t_0^2-2\Delta_i)\\ &=& -\frac{1}{2}r_i\left(\left(\mu_i+\mu+\frac{3}{2}\right)^2-\frac{1}{4}-2\Delta-2\Delta_i \right) \\
&=& -r_i (P(\mu_i+\mu)- \Delta-\Delta_i)\\
&=& -\frac{1}{r} \chi( \zeta^*,\xi_i)\\ &=& 0.\end{eqnarray*} Thus $\mu_{s_0,t_0}(\xi_i)=0$ unless perhaps $\Im(\cZ_{s_0,t_0}(\xi_i)) = 0$.  If $\Im(\cZ_{s_0,t_0}(\xi_i))=0$, then we find $s_0 = \mu_i$ and $t_0^2 = -2\Delta_i$, so $$-2\Delta_i = 2\Delta +\frac{1}{4}.$$  Since $\Delta > -\frac{1}{8}$ and $\Delta_i \geq 0$, this is a contradiction.  

We conclude $\mu_{s_0,t_0}(\xi_i)=0$, so that $(s_0,t_0)$ is a point on the wall $W(\xi_1,\xi_2)$.  By Lemma \ref{realPart0lemma}, there is a potential wall for $\xi_1$ centered at $(s_0,0)$ with radius $t_0$.  Since $\Delta_1\geq 0$, potential walls for $\xi_1$ foliate the upper half-plane, and we conclude $W(\xi_1,\xi_2)$ must be this semicircle.

$(\Leftarrow)$ As before, we may assume the $\xi_i$ both have nonzero rank.  Suppose the wall $W(\xi_1,\xi_2)$ is nonempty, with center $(s_0,0)$ and radius $t_0>0$, and consider the Chern character $$\zeta = (r,\mu,\Delta)=\left(1,-s_0-\frac{3}{2},\frac{1}{2}t_0^2 - \frac{1}{8}\right).$$ Then by our earlier calculation $$0=\Re (\cZ_{s_0,t_0}(\xi_i)) = -\chi(\zeta^\ast,\xi_i)$$ since the real part of the central charge vanishes at the top point of any potential wall.  To see that $\zeta$ is unique up to scale, view the equations $$\chi(\zeta^\ast,\xi_1) = \chi(\zeta^\ast,\xi_2)=0$$ as a system of equations in the two variables $\mu,\Delta$.  Riemann-Roch shows this is equivalent to the system \begin{eqnarray*} P(\mu+\mu_1)-\Delta_1&=& \Delta \\ P(\mu+\mu_2)-\Delta_2&=& \Delta.\end{eqnarray*} Each equation gives a parabola in the $(\mu,\Delta)$-plane, and they are translates of one another.  The hypothesis that $W(\xi_1,\xi_2)$ is a semicircle instead of a line gives that $\mu_1 \neq \mu_2$, so they intersect in precisely one point.
\end{proof}

\section{Complete intersection schemes}\label{sec-ci}

In this section, we solve the interpolation problem for an arbitrary complete intersection scheme. The proof for monomial schemes will follow essentially the same basic outline, even if the details will be substantially more complicated. 
\begin{theorem}
Let $Z$ be a (potentially non-reduced) zero-dimensional complete intersection $Z = V(f,g)$, where $\deg f = a$ and $\deg g = b$, with $a\leq b$.  Then $$\mu_{\min}^\perp(\cI_Z) = b + \frac{a-3}{2}.$$
\end{theorem}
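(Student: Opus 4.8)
The plan is to prove the two inequalities $\mu_{\min}^\perp(\cI_Z)\leq b+\tfrac{a-3}{2}$ and $\mu_{\min}^\perp(\cI_Z)\geq b+\tfrac{a-3}{2}$ separately, using the Koszul resolution of $\cI_Z$ as the ``convenient complex'' promised by the Bridgeland philosophy of the introduction. Recall that for $Z=V(f,g)$ with $\deg f=a\leq b=\deg g$ the Koszul complex gives a short exact sequence
\[
0\to \OO_{\P^2}(-a-b)\to \OO_{\P^2}(-a)\oplus \OO_{\P^2}(-b)\to \cI_Z\to 0,
\]
and twisting by a bundle $E$ preserves exactness since $E$ is locally free. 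So $E\te\cI_Z$ is acyclic as soon as both $E(-a)\oplus E(-b)$ and $E(-a-b)[1]$ are acyclic, i.e. as soon as $E(-a)$, $E(-b)$, and $E(-a-b)$ are all acyclic.

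For the upper bound, I would take $\mu' = b+\tfrac{a-3}{2}$ and build an explicit prioritary (indeed, I expect, general stable) bundle $E$ of that slope with $E(-a)$, $E(-b)$, $E(-a-b)$ all acyclic. The natural candidate is a twist of a general bundle whose slope has been chosen so that $\mu(E)-a$, $\mu(E)-b$, $\mu(E)-a-b$ straddle the value $-3/2$ symmetrically — note $\tfrac12((\mu'-a)+(\mu'-b)) = \tfrac{a+b}{2}+\tfrac{a-3}{2}-\tfrac{a+b}{2}$... more transparently, with this $\mu'$ one has $\mu'-b = \tfrac{a-3}{2}$ and $\mu'-a = b-\tfrac{a+3}{2}+\cdots$; the point is simply to choose the discriminant (forced by $\chi(E\te\cI_Z)=0$) and then invoke Theorem~\ref{prioritaryThm}(4): for a Chern character $\xi$ of large, divisible rank and slope $\mu'$ with the discriminant dictated by $\chi(\xi\te\cI_Z)=0$, the general prioritary sheaf $F\in\cP(\xi)$ is nonspecial, hence acyclic for $\cI_Z$ provided the three twists $F(-a),F(-b),F(-a-b)$ are each nonspecial. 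One checks that each of these twisted Chern characters lies in the good range (the relevant slopes exceed $-2$ when $a\geq 1$, killing $H^2$, and a Riemann–Roch/Theorem~\ref{stableClassification} computation arranges $H^0=H^1=0$ too), so genericity gives acyclicity of all three simultaneously, hence of $F\te\cI_Z$. Then Theorem~\ref{bumpingUpThm} propagates interpolation to all slopes $\mu\geq\mu'$.

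For the lower bound I would argue by contradiction: suppose $E$ is a vector bundle of slope $\mu<b+\tfrac{a-3}{2}$ with $H^1(E\te\cI_Z)=0$. By Corollary~\ref{assumeStableCor} we may take $E$ stable of slope at most $\mu$, so still $\mu(E)<b+\tfrac{a-3}{2}$. Dualizing the Koszul sequence and twisting, $E\te\cI_Z$ acyclic forces (via the long exact sequence and the fact that the connecting maps are the only way cohomology can cancel) that $E(-b)$ fails to be acyclic in a controlled way — specifically I expect to show $H^0(E\te\cI_Z)=0$ combined with the Koszul sequence forces $H^0(E(-a)\oplus E(-b)) = 0$ to be impossible, or dually that $H^2(E(-a-b))\neq 0$ must contribute. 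The cleanest route: note $\chi(E\te\cI_Z)=0$ is automatic, so acyclicity is equivalent to $H^0(E\te\cI_Z)=0$ and $H^2(E\te\cI_Z)=0$; Serre duality turns $H^2(E\te\cI_Z)=H^0(E^*(-3)\te\cI_Z^{\vee})$-type statements, but more efficiently one uses that $H^0(E\te\cI_Z)=0$ means no section of $E$ vanishes on $Z$, i.e. the restriction $H^0(E)\to H^0(E\te\OO_Z)$ is injective, while stability of $E$ of small slope bounds $h^0(E)$ from above (sections come from maps $\OO\to E$, controlled by $\mu(E)$), and a comparison with $\deg Z = ab$ and the structure of $\OO_Z$ as a monomial-type quotient forces $\mu(E)\geq b+\tfrac{a-3}{2}$.

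\textbf{Main obstacle.} The genuinely delicate step is the lower bound. The subobject $\OO_{\P^2}(-b)\subset \cI_Z$ realizing the destabilizing Bridgeland wall makes clear \emph{why} $b+\tfrac{a-3}{2}$ is the answer — by Proposition~\ref{orthogonalInvariantsProp} a bundle $\zeta$ orthogonal to $\ch(\OO(-b))$ has slope $b-\tfrac32$, and orthogonality to the quotient then pins the slope down — but turning this numerical wall-crossing heuristic into a rigorous lower bound requires either (a) a direct cohomological estimate bounding $h^0(E\te\cI_Z)$ below whenever $\mu(E)$ is too small, using stability and the explicit structure of $\cI_Z$, or (b) an argument that any $E$ with interpolation must be ``orthogonal'' to the destabilizing object $\OO(-b)$ in the sense that $E\te\OO(-b)$ is acyclic, which then forces $\mu(E)\geq b-\tfrac32$ and, combined with orthogonality to the quotient ideal, gives the sharp bound. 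I would pursue (b), since it is exactly the prototype of the general monomial argument and isolates the single clean input $\chi(E(-b))=0\Rightarrow$ forced inequality; the routine Riemann–Roch bookkeeping I would defer.
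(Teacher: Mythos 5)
Your upper bound strategy has a genuine numerical gap. You propose to make $E(-a)$, $E(-b)$ and $E(-a-b)$ simultaneously acyclic, but acyclicity forces the Euler characteristic of each twist to vanish, and for $a<b$ this is impossible at the invariants forced by $\chi(E\te \cI_Z)=0$. Concretely, with $\mu = b+\frac{a-3}{2}$ and $\Delta = P(\mu)-ab = \frac{(2b-a)^2-1}{8}$, Riemann--Roch gives $\chi(E(-a))=0$ but $$\chi(E(-b))=\chi(E(-a-b))=\frac{rb(a-b)}{2}<0,$$ so $H^1(E(-b))\neq 0$ and the ``good range'' check you defer actually fails. The paper avoids this by \emph{not} using the Koszul resolution termwise: it groups $\OO_{\P^2}(-b)$ and $\OO_{\P^2}(-a-b)[1]$ into the rank $0$ object $\OO_C(-b)$, the quotient in the destabilizing sequence $0\to\OO_{\P^2}(-a)\to\cI_Z\to\OO_C(-b)\to 0$, for which acyclicity of $E\te\OO_C(-b)$ requires only the single slope condition $\chi(E\te\OO_C(-b))=0$ together with a discriminant \emph{inequality} (Theorem \ref{rank0bumpUp}); the needed cancellation between $H^i(E(-a-b))$ and $H^i(E(-b))$ is exactly what the rank $0$ object encodes. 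This is the essential point of the Bridgeland philosophy that your termwise approach misses: the destabilizing sequence, not the minimal free resolution, is the right complex to tensor with $E$.

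The lower bound is also not established: you name two routes but develop neither, and route (b) misidentifies the destabilizing subobject. It is $\OO_{\P^2}(-a)$, not $\OO_{\P^2}(-b)$; the wall $W(\OO_{\P^2}(-a),\cI_Z)$ is the larger of the two, and applying Proposition \ref{orthogonalInvariantsProp} to it is what produces the slope $b+\frac{a-3}{2}$, whereas the wall for $\OO_{\P^2}(-b)$ would give the smaller value $a+\frac{b-3}{2}$. Moreover, the claim that any interpolating bundle must be ``orthogonal'' to the destabilizing subobject is asserted, not proved, and the paper does not argue this way. Instead it gives a geometric argument: fixing the degree $a$ curve $C$ and varying the degree $b$ cutting curve produces a complete curve $\alpha$ in the Hilbert scheme through $Z$, all of whose members have the same destabilizing sequence and hence are acyclic against the optimal $E$; therefore $\alpha\cdot D_E=0$ and $Z$ lies in the stable base locus of $|\mu H - \frac{1}{2}B|$ for every $\mu < b+\frac{a-3}{2}$. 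Some such global input (a dual curve, or the monotonicity of Theorem \ref{bumpingUpThm} combined with a contradiction at the optimal slope) is needed; your sketch does not yet supply it.
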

\begin{proof}
The ideal sheaf $\cI_Z$ has a resolution $$ 0 \to \OO_{\P^2}(-a-b)\to \OO_{\P^2}(-a)\oplus \OO_{\P^2}(-b) \to \cI_Z\to 0.$$  Equivalently, there is an exact sequence $$0 \to \OO_{\P^2}(-a)\to \cI_Z \to \OO_C(-b)\to 0.$$ We guess that this is the Bridgeland destabilizing sequence of $\cI_Z$.

We compute the wall $W(\OO_{\P^2}(-a),\cI_Z)$.  We have $$\ch(\OO_{\P^2}(-a)) = (1,-a,\frac{a^2}{2}), \qquad \ch(\cI_Z) = (1,0,-ab),$$ so the center of the wall is the point $(s,0)$ with $$s= -\frac{1}{2}a-b$$ and the radius $\rho$ satisfies $$\rho^2=\frac{1}{4}(a-2b)^2.$$ Then if $\zeta_\opt \in K(\P^2)$ is the class with $r(\zeta_\opt)\neq 0$ such that $$\chi(\zeta_\opt^*,\OO_{\P^2}(-a))= \chi(\zeta_\opt^*,\cI_Z)=0,$$ we must have $$\mu_\opt(\cI_Z) := \mu(\zeta_\opt)= -s-\frac{3}{2} = b+\frac{a-3}{2}$$ and $$\Delta_\opt(\cI_Z) := \Delta(\zeta_\opt)=\frac{1}{2}\rho^2-\frac{1}{8}=\frac{1}{8}((a-2b)^2-1).$$

We wish to show that if $r(\zeta_\opt)$ is sufficiently large and divisible, then a general $E\in \cP(\zeta_\opt)$ has interpolation for $Z$.  To do this we show that both $E(-a)$ and $E \te \OO_C(-b)$ are acyclic.  By Theorem \ref{prioritaryThm}, $E(-a)$ is acyclic since $\chi(E(-a))=0$.

To analyze $E\te \OO_C(-b)$, we solve the interpolation problem for $\OO_C(-b)$.  Here the destabilizing sequence is $$0\to \OO_{\P^2}(-b) \to \OO_C(-b) \to \OO_{\P^2}(-a-b)[1]\to 0.$$  The corresponding wall has the same center $$s=-\frac{1}{2}a-b$$ as the wall $W(\OO_{\P^2}(-a),\cI_Z)$, but its radius $\rho'$ satisfies $$(\rho')^2 = \frac{1}{4}a^2.$$ Observe that $$\rho^2-(\rho')^2 = b(b-a).$$ Thus the wall $W(\OO_{\P^2}(-b),\OO_C(-b))$ is always nested in $W(\OO_{\P^2}(-a),\cI_Z)$, but can be equal in case $a=b$.  If $\zeta_\opt'$ is a class orthogonal to $\OO_{\P^2}(-b)$ and $\OO_C(-b)$, then $$\mu_\opt(\OO_C(-b)):=\mu(\zeta_\opt')=b+\frac{a-3}{2}$$ and $$\Delta_\opt(\OO_C(-b)):=\Delta(\zeta_\opt') = \frac{1}{8}(a^2-1).$$ By Theorem \ref{prioritaryThm},  if $r(\zeta_\opt')$ is sufficiently large and divisible and $E\in \cP(\zeta_\opt')$ is general, then $E(-b)$ and $E(-a-b)$ are both acyclic.  Therefore, $E\te \OO_C(-b)$ is acyclic as well.  By Theorem \ref{rank0bumpUp}, the general $E \in \cP(\zeta_\opt)$ also has $E\te \OO_C(-b)$ acyclic since $\Delta_\opt(\cI_Z) \geq \Delta_\opt(\OO_C(-b))$.

We can easily verify that the guessed destabilizing sequences for $\cI_Z$ and $\OO_C(-b)$ are the actual destabilizing sequences.  The object $\OO_C(-b)$ is semistable along the wall $W(\OO_{\P^2}(-b),\OO_C(-b))$ since it is an extension of semistable objects of the same slopes. Furthermore, it is semistable for all points outside this wall since it is a Gieseker stable sheaf.  Hence, we deduce that $\cI_Z$ is semistable along the wall $W(\OO_{\P^2}(-a),\cI_Z)$ by the same argument.  

So far we have proved that $\mu_{\min}^\perp(\cI_Z) \leq \mu_\opt(\cI_Z)$.  For the other inequality, we produce a curve $\alpha$ in the Hilbert scheme passing through $Z$ such that $\alpha \cdot D_E=0$, where $D_E$ is the divisor corresponding to the vector bundle $E$ with invariants $\zeta_\opt$ having interpolation for $Z$.  To do this, keep the curve $C$ of degree $a$ fixed but vary the cutting curve of degree $b$ in a one-parameter family of curves having no common components with $C$.  Any scheme $Z'$ parameterized in this way also has the same destabilizing sequence $$0\to \OO_{\P^2}(-a)\to \cI_{Z'} \to \OO_C(-b)\to 0.$$ Then for all such $Z'$, we must have $E\te \cI_{Z'}$ acyclic, and thus $\alpha \cdot D_E = 0$.
\end{proof}

\section{Monomial objects}\label{sec-monomialObjects}

When studying the interpolation problem for a  monomial scheme $Z$, it is natural to decompose  $Z$ into  simpler monomial schemes.  If this decomposition is chosen correctly,  checking interpolation  for  $Z$ reduces to  checking interpolation for simpler schemes. The proof of Theorem \ref{thm-intro-interpolate} will be by induction on the complexity of $Z$. In this section, we will give the roadmap of the proof and describe the inductive process in detail. The actual verifications of the claims will take up the next several sections.  
 
Let $Z$ be a monomial scheme with ideal sheaf $\cI_Z$.  The main issue is to determine the Bridgeland destabilizing sequence $$0\to A \to \cI_Z\to B\to 0.$$  Then to solve the interpolation problem for $\cI_Z$, one solves the interpolation problem for the simpler objects $A$ and $B$.  While the objects $A$ and $B$ are not themselves monomial ideal sheaves, they are closely related to monomial schemes.  In the course of running this argument, there are only $3$ essentially different types of objects which arise as destabilizing subobjects or quotient objects.  This bounded complexity is what allows us to solve the interpolation problem for monomial schemes.  In this section, we introduce the $3$ different types of \emph{monomial objects}, and describe how they are destabilized.

\subsection{Notation}\label{notationSec}  Before describing the various types of monomial objects, we fix some notation common to each situation.  Consider a monomial scheme $Z$, and write $n$ for its degree.  Its block diagram $D=D_Z$ has $r(D)$ rows and $c(D)$ columns.  The minimal powers of $x,y$ in $\cI_Z$ are $x^{r(D)}$ and $y^{c(D)}$, respectively.  We write $L$ for the line $y=0$ and $L'$ for the line $x=0$.  We denote a fat line such as the nonreduced scheme $y^k=0$ by $kL$.

For any $k$ with $1\leq k \leq c(D)$, we define a monomial scheme $W_k$ corresponding to the ideal quotient $(\cI_Z: y^k)$ and another monomial scheme $Z_k$ by the intersection $Z_k = Z \cap kL$.  Similarly, for any $k$ with $1\leq k \leq r(D)$, we define a monomial scheme $W_k'$ corresponding to the ideal quotient $(\cI_Z:x^k)$ and another monomial scheme $Z_k' = Z \cap kL'$.  

In terms of the block diagram $D_Z$, the block diagrams $D_{W_k}$ and $D_{Z_k}$ split the diagram into the parts above and below the $k$th horizontal line, respectively (indexing rows from the bottom, starting at $0$ underneath the first row of boxes).   The block diagrams $D_{W'_k}$ and $D_{Z'_k}$ split $D_Z$ into the parts right and left of the $k$th vertical line, respectively (see Figure \ref{figure-notation4.1}).  We denote by $w_k$ and $w_k'$ the degree of $W_k$ and $W_k'$, respectively.  Then $\deg Z_k = n-w_k$ and $\deg Z_k' = n-w_k'$. Finally, we let $\ell$ (or $\ell(Z)$) be the number of ``full'' rows of length $c(D)$ in the block diagram, and we let $\ell'$ be the number of columns of length $r(D)$.

In further constructions, we will always denote ``vertical'' constructions with a prime (e.g., $Z_k'$) and leave ``horizontal'' constructions unadorned (e.g., $Z_k$).

 \begin{figure}[htbp]
\begin{center}
\input{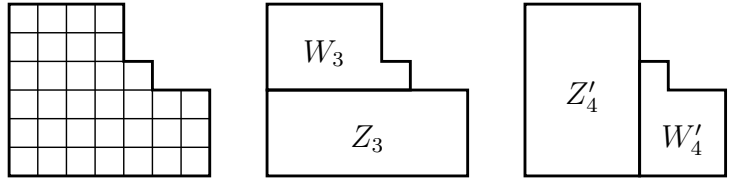}
\end{center}
\caption{The schemes $W_3, Z_3$ and $W_4'$, $Z_4'$ for a monomial scheme.}
\label{figure-notation4.1}
\end{figure}

\subsection{Rank 1 monomial objects}  Despite the complicated name, a \emph{rank 1 monomial object} is just an ideal sheaf $\cI_Z$ of a monomial scheme.  We call $\cI_Z$ \emph{trivial} if $Z$ is empty, so that $\cI_Z = \OO_{\P^2}$, and we assume $\cI_Z$ is nontrivial.  For each $k$ with $1\leq k \leq r(D)$, there is an exact sequence $$0\to \cI_{W_k}(-k) \fto{y^k} \cI_Z \to \cI_{Z_k\subset kL}\to 0.$$  For each $k$, we obtain a potential Bridgeland wall $W(\cI_{W_k}(-k),\cI_Z)$ for $\cI_Z$.  Similarly, by exchanging the roles of $x$ and $y$, we obtain another family $$ 0 \to \cI_{W'_k} (-k) \fto{x^k} \cI_Z\to \cI_{Z'_k\subset kL'}\to 0$$ of sequences which potentially destabilize $\cI_Z$.  Bridgeland walls for $\cI_Z$ are nested semicircles to the left of the vertical wall $s=0$ in the $(s,t)$-plane.  The destabilizing sequence for $\cI_Z$ always corresponds to the largest wall constructed in this way (see \S \ref{sec-nesting}).

 \begin{figure}[htbp]
\begin{center}
\input{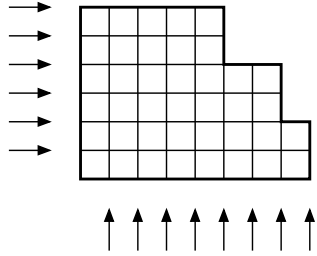}
\end{center}
\caption{The radii that need to be compared to find the destabilizing subobject of a rank 1 monomial object $\cI_Z$.}
\label{figure-rank1}
\end{figure}

Observe that (up to exchanging $x$ and $y$) the destabilizing subobject is a twist of a rank 1 monomial object $\cI_{W_k}(-k)$, with $\deg W_k < \deg Z$ (potentially $W_k$ is empty, in case $k= r(D)$).  The cokernel, however, is something new which we must study.

\subsection{Rank $0$ monomial objects} Here we describe a class of objects which (up to exchanging $x$ and $y$) contains all the destabilizing quotient objects of rank 1 monomial objects.  In order to not disrupt the flow of the argument, we will defer the simple definition of a \emph{horizontally pure} monomial scheme to \S \ref{subsec-purity}.  It combinatorially captures the condition for a rank $0$ sheaf to arise as a destabilizing quotient object of a rank $1$ monomial object.

\begin{definition}
A \emph{rank $0$ monomial object} is an ideal sheaf $\cI_{Z\subset kL}$, where $Z\subset kL$ is a horizontally pure monomial scheme whose block diagram $D$ has $r(D)=k$.
\end{definition}

For an alternate self-contained definition, we will see that $Z$ is horizontally pure if and only if $\cI_{Z\subset kL}$ is a (Gieseker) semistable sheaf (see Theorem \ref{thm-Gieseker}).

As in the rank $1$ case, we construct several potentially destabilizing subobjects of a rank $0$ monomial object, and guess that the actual destabilizing subobject corresponds to the sequence which gives the largest Bridgeland wall.  Consider the block diagram $D$ of $Z$. By assumption $r(D)=k$, so there are  $\ell'>0$ columns of $k$ boxes.  For each $i$ with $\ell' \leq i\leq c(D)$, we consider the map $$ \cI_{W_i'}(-i) \xrightarrow{x^i} \cI_{Z\subset kL}, $$ which is (typically) neither injective nor surjective as a map of sheaves. The kernel is $\OO_{\P^2}(-k-i)$ and the cokernel is $\cI_{Z_i'\subset kL \cap iL'}$.  Viewing the rows of the diagram $$\xymatrix
{
\cI_{W_i'}(-i)\ar[r] \ar[d] & \cI_{Z \subset kL} \ar[d]\\
\OO_{\P^2}(-i) \ar[r] & \cI_{Z'_i\subset kL} \\
\OO_{\P^2}(-k)\oplus \OO_{\P^2}(-i)  \ar[r]\ar[u] & \cI_{Z'_i}\ar[u]
}
$$
as complexes, the vertical maps are quasi-isomorphisms.  Let $F^\bullet$ be the complex in the final row, supported in degrees $-1$ and $0$.  Then $F^\bullet$ is the mapping cone of the map $\cI_{W_i'}(-i) \to \cI_{Z\subset kL}$, and there is a distinguished triangle $$\cI_{W_i'}(-i) \to \cI_{Z\subset kL} \to F^\bullet\to\cdot$$ 

For each $i$, we have a potential wall $$W(\cI_{W_i'}(-i),\cI_{Z\subset kL})$$ for the rank 0 object $\cI_{Z\subset kL}$.  Walls for this object are all concentric semicircles.  If we choose $i$ to maximize the radius of this wall, then in fact all the objects in the previous distinguished triangle lie in the the category $\cA_s$ along the wall, and the triangle becomes an exact sequence.  Furthermore, $\cI_{Z\subset kL}$ is first destabilized along this wall (see \S \ref{sec-nesting}).     

Again in this case, the destabilizing subobject is a twist of a rank 1 monomial object, where the monomial scheme has smaller degree than $Z$ (since $\ell'>0$ in the previous construction).  This time the rank $-1$ complex $F^\bullet$ is new.

 \begin{figure}[htbp]
\begin{center}
\input{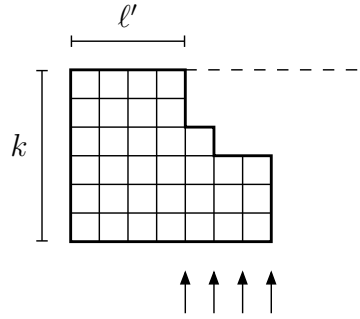}
\end{center}
\caption{The radii that need to be compared to find the destabilizing subobject of a rank 0 monomial object $\cI_{Z\subset kL}$.}
\label{figure-rank0}
\end{figure}

\subsection{Rank $-1$ monomial objects} A \emph{rank $-1$ monomial object} is a complex $F^\bullet$ of the form $$\OO_{\P^2}(-k)\oplus \OO_{\P^2}(-i) \xrightarrow{(y^k,x^i)} \cI_Z,$$ supported in degrees $-1$ and $0$, where $Z$ is a monomial scheme whose block diagram has $r(D)=k$ and $c(D)=i$. Then there are $\ell>0$ rows of length $i$ and $\ell'>0$ columns of length $k$.  The destabilizing quotient object of a rank $0$ monomial object is always a rank $-1$ monomial object (see \S \ref{sec-nesting}).

The cohomology sheaves of such a complex are given by $$\rH^{-1}(F^\bullet)= \OO_{\P^2}(-k-i), \qquad \rH^0(F^\bullet) = \cI_{Z\subset kL \cap iL'}.$$ Thus, $F^\bullet$ is in the category $\cA_s$ if and only if $s$ lies to the \emph{right} of the vertical wall $s = -i-k$.  We will show that $F^\bullet$ is Gieseker stable (see Theorem \ref{thm-Gieseker}), in the sense that it is $(s,t)$-Bridgeland stable for sufficiently large $t$.

Observe that if $Z$ is the complete intersection $kL \cap iL'$, then $F^\bullet$ is quasi-isomorphic to $\OO_{\P^2}(-i-k)[1]$; in this case we call $F^\bullet$ \emph{trivial}.  Thus, we assume that $Z$ is nontrivial. 
Potential destabilizing objects of $F^\bullet$ are analogous to the destabilizing objects in the rank 1 case.  For any $j$ with $\ell \leq j<  k$ (note the strict inequality) there are maps of complexes $$
\xymatrix{
\OO_{\P^2}(-k) \ar[r]\ar[d] & \OO_{\P^2}(-k)\oplus \OO_{\P^2}(-i) \ar[r]\ar[d] & \OO_{\P^2}(-i) \ar[d]\\
\cI_{W_j}(-j) \ar[r] & \cI_Z  \ar[r] & \cI_{Z_j\subset jL}
}
$$
giving a distinguished triangle in the derived category.  For each $j$, we obtain  a wall where the three vertical complexes here have the same Bridgeland slope.  By interchanging the roles of $x$ and $y$, we get a similar diagram and another set of walls.  Then $F^\bullet$ is destabilized along the largest wall constructed in this fashion (see \S \ref{sec-nesting}).

 \begin{figure}[htbp]
\begin{center}
\input{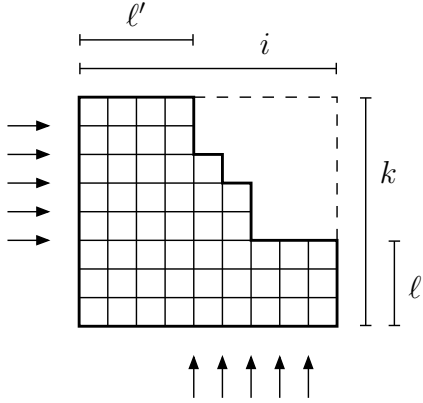}
\end{center}
\caption{The radii that need to be compared to find the destabilizing subobject of a rank $-1$ monomial object $\OO_{\P^2}(-k)\oplus \OO_{\P^2}(-i) \to \cI_Z$.}
\label{figure-rankminus1}
\end{figure}

Without loss of generality assume the largest wall corresponds to the first construction above, where $x$ and $y$ have not been swapped, and let the largest wall correspond to the index $j$, as in the diagram.  Then the destabilizing subobject is quasi-isomorphic to $\cI_{W\subset (k-j)L}(-j)$, and by our choice of $j$  the scheme $W_j$ has the required purity property for this sheaf to be a twisted rank $0$ monomial object (see Proposition \ref{prop-revisit}).  On the other hand, the quotient object $\OO_{\P^2}(-i)\to \cI_{Z_j\subset jL}$ is quasi-isomorphic to the rank $-1$ monomial object $$\OO_{\P^2}(-j)\oplus \OO_{\P^2}(-i) \to \cI_{Z_j}.$$ Since $j<k$, both schemes $W_j$ and $Z_j$ are nonempty, and hence of smaller degree than $Z$.

\begin{example}\label{bigexample}
In Figure \ref{figure-bigexample}, we illustrate the inductive procedure by decomposing the monomial scheme $Z$ of degree $48$ defined by the monomials $x^9,x^7y^2,x^6y^4,x^4y^5,x^3y^6,y^8$. In the figure, block diagrams with no dotted lines denote rank $1$ monomial objects. Rank $0$ monomial objects are indicated by a single dotted line and rank $-1$ monomial objects are denoted with two dotted lines. The left branch under any object denotes the destabilizing subobject and the right branch denotes the quotient object. A small arrow indicates the horizontal or vertical line that corresponds to the largest Bridgeland wall.

 \begin{figure}[htbp]
\begin{center}
\input{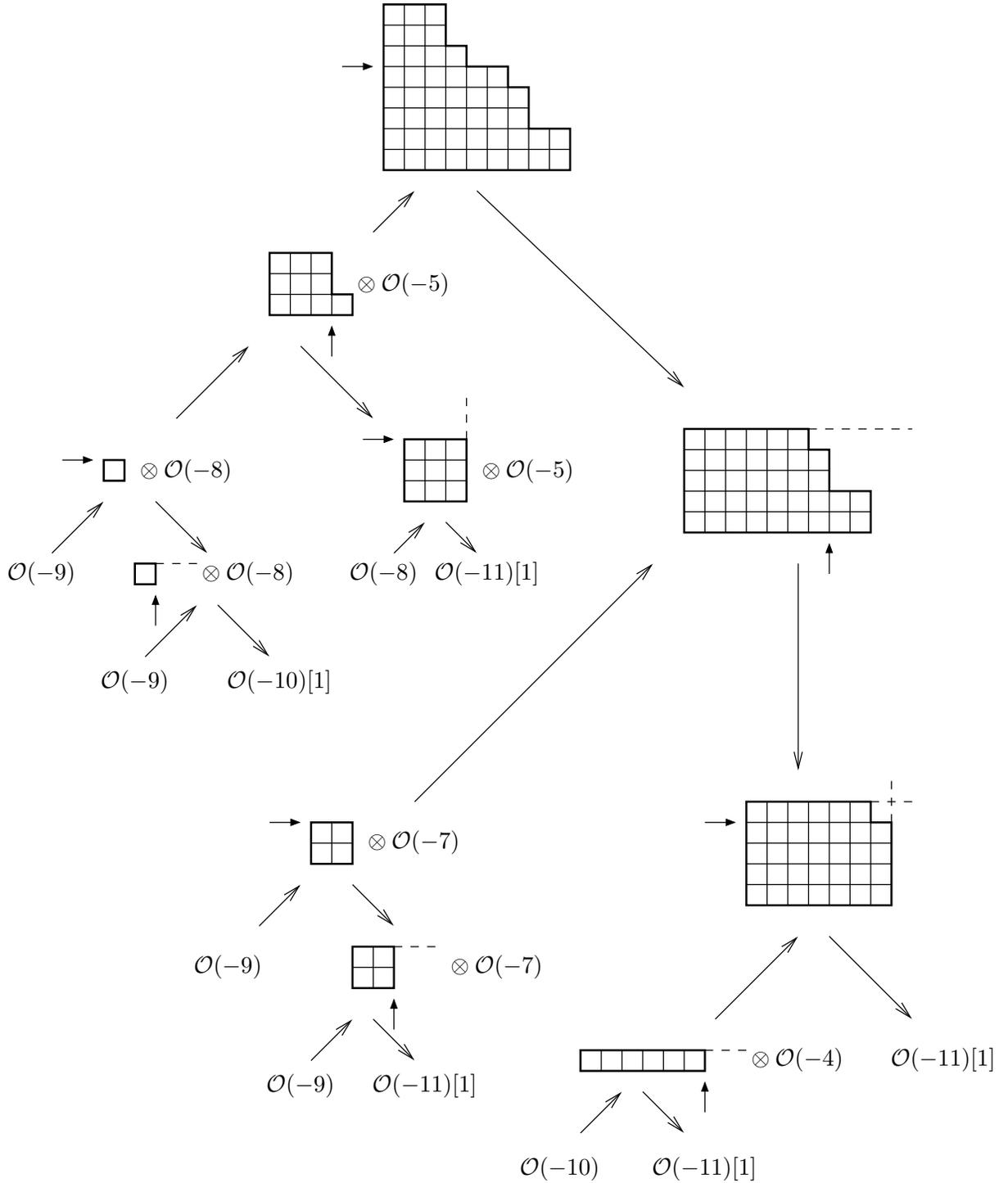}
\end{center}
\caption{Inductive procedure for decomposing the degree $48$ monomial scheme $Z$ with ideal $(x^9,x^7y^2,x^6y^4,x^4y^5,x^3y^6,y^8)$.  We have $\mu_{\min}^\perp(\cI_Z) = 8 \frac{3}{5}$.  See Example \ref{bigexample} for details.}
\label{figure-bigexample}
\end{figure}

The destabilizing sequence for $\cI_Z$ is $$0 \rightarrow \cI_{W_5}(-5) \rightarrow \cI_Z \rightarrow \cI_{Z_5 \subset 5 L} \rightarrow 0,$$ where $W_5$ is the monomial scheme with ideal $(x^4, x^3y, y^3)$ and $Z_5$ is the monomial scheme with ideal $(x^9, x^7y^2, x^6y^4, y^5)$. The left branch in the figure further decomposes $\cI_{W_5}(-5)$ until every object is a line bundle or a shift of a line bundle. For example, at the first step, $\cI_{W_5}(-5)$ has destabilizing sequence $$0 \rightarrow \cI_p(-8) \rightarrow \cI_{W_5}(-5) \rightarrow \cI_{U \subset 3 L'} (-5) \rightarrow 0,$$ where $p$ is the  point with ideal $(x,y)$ and $U$ is the monomial scheme with ideal $(x^3,y^3)$. At this point, it is easy to decompose these monomial objects into line bundles and their shifts and Figure \ref{figure-bigexample} depicts the outcome.

The right branch further decomposes the rank $0$ monomial object $\cI_{Z_5 \subset 5 L}$.  In order to inductively analyze $Z_5$ with our usually notation, set $Z=Z_5$.  Let $W_7'$ be the monomial scheme with ideal $(x^2, y^2)$ and let $Z_7'$ be the monomial scheme with ideal $(x^7, x^6y^4, y^5)$. The destabilizing sequence is $$0 \rightarrow \cI_{W_7'}(-7) \rightarrow \cI_{Z=Z_5 \subset 5 L} \rightarrow F^{\bullet} \rightarrow 0,$$ where $F^{\bullet}$ is the rank $-1$ monomial object 
$$\OO_{\PP^2}(-5) \oplus \OO_{\PP^2}(-7) \xrightarrow{(y^5, x^7)} \cI_{Z_7'}.$$ The rest of the diagram shows the further decomposition of these objects into line bundles and their shifts. 
\end{example}

\subsection{Outline of the proof of Theorems \ref{thm-intro-interpolate} and  \ref{mainThm}} Here we discuss the general outline of the structure of the rest of the paper.  The basic strategy is reminiscent of the argument for complete intersections in \S\ref{sec-ci}.

\begin{enumerate}

\item Calculate the numerical invariants associated to monomial objects.  Determine the radii of the potential Bridgeland walls, so that the actual destabilizing wall can be computed.  See \S\ref{sec-invariants}.

\item Define the horizontal purity condition in the definition of a rank $0$ monomial object (\S \ref{subsec-purity}).  Verify that the destabilizing quotient object of a rank $1$ monomial object is a rank $0$ monomial object (Proposition \ref{rank1quotientPure}), and that the destabilizing subobject of a rank $-1$ monomial object is a rank $0$ monomial object (Proposition \ref{prop-revisit}).

\item Show that each type of monomial object is Gieseker semistable (Theorem \ref{thm-Gieseker}).  
Any Gieseker semistable object is $\cZ_{s,t}$-semistable everywhere outside a single semicircular wall where it is destabilized.

\item For each monomial object $F$, consider the hypothesized destabilizing sequence $$0 \to A \to F \to B\to 0$$ which was guessed in this section.  Use the Gieseker semistability property and induction to show that $A$ and $B$ are semistable outside semicircular walls  nested inside the wall $W(A,F)$ (see \S \ref{sec-nesting}).  Conclude that this sequence is actually the destabilizing sequence for $F$.

\item Inductively solve the interpolation problem for all three types of monomial objects simultaneously by making use of the destabilizing sequences  (see \S \ref{sec-interpolate}).
\end{enumerate}

\section{Calculation of numerical invariants \& purity}\label{sec-invariants}

In this section, we collect several numerical calculations concerning Bridgeland walls associated to monomial objects defined in \S \ref{sec-monomialObjects}.  We also introduce the purity condition for a rank $0$ monomial object.  We preserve the notation from \S\ref{notationSec}.  

\subsection{Invariants of rank 1 monomial objects}\label{rank1invs}

Recall that a rank one object is just a monomial ideal sheaf $\cI_Z$.  The potential destabilizing sequences are of the form 
$$
0 \to \cI_{W_k}(-k)  \to \cI_Z\to \cI_{Z_k\subset kL}\to 0
$$
$$
0 \to \cI_{W_k'}(-k)\to \cI_Z\to \cI_{Z_k'\subset kL'}\to 0
$$
and the destabilizing sequence is the sequence above which gives the largest wall.  We compute
$$\ch(\cI_{W_k}(-k)) = (1,-k,\frac{k^2}{2}-w_k), \qquad \ch(\cI_Z)=(1,0,-n),$$ so the wall $W(\cI_{W_k}(-k),\cI_Z)$ is centered at the point $(s_k,0)$ in the Bridgeland plane with $$s_k = -\frac{k}{2}+\frac{w_k-n}{k};$$ similarly, the wall $W(\cI_{W_k'}(-k),\cI_Z)$ is centered at the point $(s_k',0)$ with $$s_k'= -\frac{k}{2}+\frac{w_k'-n}{k}.$$ Note that $\Delta(\cI_Z)=n \geq 0$, so the discussion of potential Bridgeland walls in \S \ref{sec-prelim-Bridge} applies, as does Proposition \ref{orthogonalInvariantsProp}. Potential walls for $\cI_Z$ are nested semi-circles to the left of the vertical wall $s=0$, with larger walls having more negative centers (see \cite{ABCH} and \S \ref{sec-prelim-Bridge}).  Therefore, the largest wall corresponds to the minimum number among the $s_k$ (with $1\leq k\leq r(D)$) and the $s_k'$ (with $1\leq k' \leq c(D)$).  

\begin{remark}
When $s_k^2 < 2n$, the wall constructed above is empty since the virtual radius $\rho_k$ given by $\rho_k^2 = s_k^2-2n$ is imaginary.  In \S \ref{sec-nesting}, we will show that if $s_k$ or $s_k'$ corresponds to the largest possible wall, then it is nonempty and the destabilizing sequence is an exact sequence in categories $\cA_s$ along the wall.
\end{remark}

For each $k$, let $\zeta_k\in K(\P^2)$ be a class with $r(\zeta_k)\neq 0$ such that $$\chi(\zeta_k^\ast,\cI_{W_k}(-k)) = \chi(\zeta_k^*,\cI_Z) = 0,$$ and analogously let $\zeta_k'$ be an orthogonal class for the other type of potential destabilizing sequence; these are unique up to scale.  Keeping Proposition \ref{orthogonalInvariantsProp} in mind, we define $$\mu_k(\cI_Z) := \mu(\zeta_k)=-s_k-\frac{3}{2}$$ and $$\Delta_k(\cI_Z):=\Delta(\zeta_k)=\frac{1}{2}\rho_k^2-\frac{1}{8}.$$ The slopes $\mu_k(\cI_Z)$ are precisely the slopes defined in the introduction, so they have a convenient combinatorial description in terms of block diagrams.  We similarly define $$\mu_k'(\cI_Z) = \mu(\zeta_k') \qquad \Delta_k' = \Delta(\zeta_k').$$ Then the potential wall with largest radius corresponds to a class $\zeta_\opt$ with invariants $(\mu_\opt(\cI_Z),\Delta_\opt(\cI_Z))$ given by \begin{eqnarray*} \mu_\opt(\cI_Z) &=& \max_k \{ \mu_k(\cI_Z),\mu_k'(\cI_Z)\}\\ \Delta_\opt(\cI_Z) &=& \max_k \{ \Delta_k(\cI_Z),\Delta'_k(\cI_Z)\}
\end{eqnarray*} 
In fact, $\Delta_\opt(\cI_Z)$ may be easily determined from $\mu_\opt(\cI_Z)$ by applying Riemann-Roch to the equality $\chi(\zeta_\opt^*,\cI_Z) =0$: $$\Delta_\opt(\cI_Z) = P(\mu_\opt(\cI_Z))-n.$$

\subsection{Purity of monomial schemes} \label{subsec-purity} We now introduce the purity condition in the definition of a rank 0 monomial object.  Suppose $Z$ is a monomial scheme with $k$ rows in its block diagram, and consider the $k$ horizontal slopes $\mu_1(\cI_Z),\ldots, \mu_k(\cI_Z)$ defined in the previous subsection.  We say that $Z$ is \emph{horizontally pure} if $\mu_i(\cI_Z) \leq \mu_k(\cI_Z)$ for $1\leq i\leq k$.  We will see in the next section that this condition can naturally be interpreted in terms of the Gieseker semistability of the pure $1$-dimensional sheaf $\cI_{Z\subset kL }$.  

\begin{proposition}\label{rank1quotientPure}
Let $Z$ be a monomial scheme, and suppose the largest potential wall constructed for the rank one monomial object $\cI_Z$ corresponds to the horizontal slope $\mu_\opt(\cI_Z) = \mu_k(\cI_Z)$.  Then $Z_k$ is horizontally pure, so the quotient object $\cI_{Z_k\subset kL}$ is a rank $0$ monomial object.
\end{proposition}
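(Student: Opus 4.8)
The plan is to translate the hypothesis $\mu_\opt(\cI_Z) = \mu_k(\cI_Z)$ into a statement purely about the block diagram $D_Z$, and then relate the horizontal slopes of $\cI_Z$ to the horizontal slopes of $\cI_{Z_k}$. Recall that $Z_k = Z \cap kL$ is obtained from $D_Z$ by keeping only the bottom $k$ rows; so if $h_1, \ldots, h_{r(D)}$ are the row lengths of $D_Z$ (from the bottom), then the block diagram of $Z_k$ has exactly the row lengths $h_1, \ldots, h_k$. In particular, for every $i$ with $1 \le i \le k$, the $i$th horizontal slope of $\cI_{Z_k}$ is literally the same as the $i$th horizontal slope of $\cI_Z$, since $\mu_i = \frac1i\sum_{j=1}^i(h_j + j - 1) - 1$ depends only on the first $i$ row lengths. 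This is the key observation: the purity condition for $Z_k$ asks exactly that $\mu_i(\cI_{Z_k}) \le \mu_k(\cI_{Z_k})$ for $1 \le i \le k$, which by this identification is the same as $\mu_i(\cI_Z) \le \mu_k(\cI_Z)$ for $1 \le i \le k$.

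So the first step is to record that identity of horizontal slopes. The second step is to extract the needed inequalities from the hypothesis. By assumption the largest potential wall for $\cI_Z$ is the one indexed by $k$, i.e. by the discussion in \S\ref{rank1invs} the center $s_k$ is minimal among all the $s_j$ ($1\le j\le r(D)$) and $s_j'$ ($1\le j\le c(D)$). Via Proposition \ref{orthogonalInvariantsProp} (or directly from $\mu_j(\cI_Z) = -s_j - \tfrac32$), minimality of $s_k$ is equivalent to $\mu_k(\cI_Z) \ge \mu_j(\cI_Z)$ for all $j$ with $1 \le j \le r(D)$, and in particular for all $j$ with $1 \le j \le k$ (here we also use $k \le r(D)$, which holds since $k$ indexes a horizontal slope). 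Combining with the identification of step one, we get $\mu_i(\cI_{Z_k}) = \mu_i(\cI_Z) \le \mu_k(\cI_Z) = \mu_k(\cI_{Z_k})$ for $1 \le i \le k$, which is precisely horizontal purity of $Z_k$. Finally, since the block diagram of $Z_k$ has exactly $k$ rows (it has at least $k$ because $h_k \ge 1$ as $k \le r(D)$, and exactly $k$ by construction of the truncation), the definition of a rank $0$ monomial object applies and $\cI_{Z_k \subset kL}$ is a rank $0$ monomial object.

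I do not expect a genuine obstacle here; the content is essentially bookkeeping once one notices that truncating the diagram to its bottom $k$ rows leaves the first $k$ horizontal slopes untouched. The one point that deserves a sentence of care is the distinction between the ambient $r(D) = r(D_Z)$ and the row count $r(D_{Z_k}) = k$ of the truncated diagram, so that ``$Z_k$ horizontally pure'' is being checked against the correct ``top'' slope $\mu_k$, not against $\mu_{r(D)}$; but this is exactly what the hypothesis delivers, since it gives $\mu_k(\cI_Z) \ge \mu_j(\cI_Z)$ for \emph{all} $j$, a fortiori for $j \le k$. A secondary thing to double-check is that $k \le r(D)$ genuinely holds, i.e. that $\mu_k$ is a legitimate horizontal slope of $Z$ in the first place; this is part of what it means for $\mu_\opt(\cI_Z)$ to be achieved by $\mu_k$, so it is harmless to invoke.
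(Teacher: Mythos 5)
Your proposal is correct and follows exactly the paper's argument: the paper's proof is the one-line chain $\mu_i(\cI_{Z_k}) = \mu_i(\cI_Z) \leq \mu_\opt(\cI_Z) = \mu_k(\cI_Z) = \mu_k(\cI_{Z_k})$ for $1\leq i\leq k$, justified by the block diagram interpretation of horizontal slopes, which is precisely your key observation that truncating to the bottom $k$ rows leaves the first $k$ horizontal slopes unchanged. Your version just spells out the bookkeeping (the relation between minimality of $s_k$ and maximality of $\mu_k$, and that $r(D_{Z_k})=k$) more explicitly.
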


If the destabilizing sequence of $\cI_Z$ corresponds to splitting up $Z$ as $W_k'$ and $Z_k'$, we could discuss an analogous notion of \emph{vertically pure} monomial schemes, but we prefer to reduce to the horizontal case by swapping the $x$ and $y$ coordinates.

\begin{proof}
From the block diagram interpretation of horizontal slopes $\mu_i(\cI_Z)$, it is clear that $$\mu_i(\cI_{Z_k}) = \mu_i(\cI_Z) \leq \mu_\opt(\cI_Z)  = \mu_k(\cI_Z) =\mu_k(\cI_{Z_k})$$ for $1\leq i\leq k = r(D_{Z_k})$. 
\end{proof}

\subsection{Invariants of rank $0$ monomial objects}\label{ss-rankZeroInv} Consider a rank $0$ monomial object $F = \cI_{Z\subset kL}$, where $Z$ is a horizontally pure scheme with $r(D)=k$.  To construct potential walls for this object, we let $\ell'$ be the number of columns in $D$ of length $k$ and allowed $i$ to be any index with $\ell'\leq i \leq c(D)$.  We then considered a sequence $$ 0\to \cI_{W'_i}(-i)\to F \to (\OO_{\P^2}(-k)\oplus \OO_{\P^2}(-i)\to \cI_{Z_i'})\to 0.$$ We compute Chern characters $$\ch(\cI_{W_i'}(-i)) = (1,-i,\frac{i^2}{2}-w_i') \qquad \ch( F) = (0,k,-\frac{k^2}{2}-n).$$ All Bridgeland walls for this object are concentric circles with fixed center $$s_0 = -\frac{n}{k}-\frac{k}{2},$$ and the radius of the wall corresponding to this sequence is $\rho_i'$ given by $$(\rho_i')^2 = s_0^2- \left(i(k-i)+\frac{2ni}{k}+w_i'\right).$$

As in the rank $1$ case, let $\zeta_i' \in K(\P^2)$ be a class with nonzero rank such that $\chi((\zeta_i')^\ast,\xi)=0$ for each Chern character $\xi$ of a term in the destabilizing sequence.  We define \begin{eqnarray*} \Delta_i'(F)  = \Delta(\zeta_i') = \frac{1}{2}(\rho_i')^2-\frac{1}{8}\end{eqnarray*}
and
\begin{eqnarray*}
\mu_\opt(F) &=& -s_0-\frac{3}{2}\\
\Delta_\opt(F)&=& \max_i \{\Delta_i'(F)\}.
\end{eqnarray*}
Hence, $\mu_\opt(F)$ depends only on $\ch(F)$ and not on the finer structure of $Z$ itself, while maximizing the radius of the wall corresponds to maximizing the $\Delta_i'(F)$.

\subsection{Invariants of rank $-1$ monomial objects} Here the story is similar to rank $1$ monomial objects, so we will be more brief.  Let $F^\bullet = \OO_{\P^2}(-k)\oplus \OO_{\P^2}(-i)\to \cI_Z$.  The potential ``horizontal'' destabilizing sequences look like $$0\to \cI_{W_j\subset (k-j)L}(-j) \to F^\bullet \to (\OO_{\P^2}(-j)\oplus \OO_{\P^2}(-i)\to \cI_{Z_j})\to 0$$ with $\ell \leq j < k$, and there are analogous potential ``vertical'' destabilizing sequences.  The Chern characters of two objects from the sequence are given by $$\ch(F^\bullet)=(-1,k+i,-\frac{k^2+i^2}{2}-n) \qquad \ch(\OO_{\P^2}(-j)\oplus\OO_{\P^2}(-i)\to \cI_{Z_j})=(-1,j+i,-\frac{j^2+i^2}{2}-(n-w_j)),$$ from which we compute the center $(s_j,0)$ of the corresponding Bridgeland wall as $$s_j = -\frac{1}{2}(j+k) - \frac{w_j}{k-j}.$$ Analogously there are centers of the form $s_j'$ coming from potential vertical destabilizing sequences.  We have $\Delta(F^\bullet) = ki-n\geq 0$, so the discussion of potential Bridgeland walls from \S \ref{sec-prelim-Bridge} applies.  Potential walls for $F^\bullet$ lie to the right of the vertical wall $s = -k-i$, and grow larger as their centers increase.  Maximizing the Bridgeland wall then corresponds to maximizing the $s_j$ and $s_j'$.  Denote by $\rho_j,\rho_j'$ the virtual radii of the corresponding walls.

We choose classes $\zeta_j,\zeta_j'\in K(\P^2)$ orthogonal to the terms of the corresponding destabilizing sequences.  Define $$\begin{array}{rcccl} \mu_j(F^\bullet) &=& \mu(\zeta_j) &=&  \displaystyle -s_j-\frac{3}{2}\\[1em]
\Delta_j(F^\bullet) &=& \Delta(\zeta_j) &=& \displaystyle\frac{1}{2} \rho_j^2-\frac{1}{8},
\end{array}
$$
and analogously define $\mu_j'(F^\bullet),\Delta_j'(F^\bullet)$.  Then \begin{eqnarray*}
\mu_\opt(F^\bullet) &=& \min_j \{\mu_j(F^\bullet),\mu_j'(F^\bullet)\}\\
\Delta_\opt(F^\bullet) &=& \max_j \{\Delta_j(F^\bullet), \Delta_j'(F^\bullet)\},
\end{eqnarray*}
noticing that by contrast with the rank 1 case we must \emph{minimize} the slopes.

\subsection{Purity revisited}
As a last result in this section, let us verify that the rank $0$ destabilizing subobject of a rank $1$ monomial object satisfies the necessary purity condition.  We preserve the notation from the previous subsection.

\begin{proposition}\label{prop-revisit}
Suppose the largest potential destabilizing wall for the rank $-1$ monomial object $F^\bullet$ corresponds to a sequence $$0\to \cI_{W_j\subset (k-j)L}(-j) \to F^\bullet \to (\OO_{\P^2}(-j)\oplus \OO_{\P^2}(-i)\to \cI_{Z_j})\to 0.$$ Then $W_j$ is a horizontally pure monomial scheme.  The destabilizing subobject of a rank $-1$ monomial object is a twisted rank $0$ monomial object.
\end{proposition}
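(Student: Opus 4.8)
The plan is to show that, after the obvious re-indexing of rows, the purity inequalities for $W_j$ coincide term-by-term with the inequalities $s_{j'}\le s_j$ for the admissible horizontal indices $j<j'<k$; the hypothesis that $s_j$ gives the largest wall then yields horizontal purity of $W_j$ immediately.

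First I would set up notation. Since $F^\bullet$ is a rank $-1$ monomial object, $D_Z$ has $r(D)=k$ rows, indexed from the bottom with lengths $h_1,\dots,h_k$, and $c(D)=i$. Put $m=k-j$; the range $\ell\le j<k$ forces $m\ge 1$, and $D_{W_j}$ is obtained from rows $j+1,\dots,k$ of $D_Z$ (re-justified to the bottom), so $r(D_{W_j})=m$ and $W_j$ is nonempty (row $j+1$ of $D_Z$ has at least $\ell'>0$ boxes). For $0\le a\le m$ set $S_a=\sum_{p=1}^a h_{j+p}$, so that $h_p(W_j)=h_{j+p}$, $w_j=S_m$, and more generally $w_{j+a}=S_m-S_a$.

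Next I would write out the two families of inequalities in these terms. Horizontal purity of $W_j$ asks that $\mu_a(\cI_{W_j})\le \mu_m(\cI_{W_j})$ for all $1\le a\le m$; using the block-diagram formula $\mu_a(\cI_{W_j})=\tfrac{S_a}{a}+\tfrac{a-3}{2}$, this is equivalent, for each $a$, to
$$ m S_a-a S_m\ \le\ \tfrac12\,a m (m-a), $$
the case $a=m$ being vacuous. On the other hand, for $1\le a\le m-1$ the index $j'=j+a$ lies in $[\ell,k)$ (because $j\ge\ell$ and $j'<k$), so $s_{j'}$ is one of the horizontal wall-centers; substituting $w_{j+a}=S_m-S_a$ and $k-(j+a)=m-a$ into $s_{j'}=-\tfrac12(j'+k)-\tfrac{w_{j'}}{k-j'}$ and comparing with $s_j=-\tfrac12(j+k)-\tfrac{S_m}{m}$, a short computation shows $s_{j+a}\le s_j$ is equivalent to exactly the same inequality $m S_a-a S_m\le \tfrac12 a m(m-a)$.

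Finally, since by hypothesis $s_j$ is maximal among all potential wall-centers for $F^\bullet$ — in particular over the $s_{j+a}$ with $1\le a\le m-1$ — every one of the displayed inequalities holds, so $W_j$ is horizontally pure. Because $r(D_{W_j})=m=k-j$, the destabilizing subobject $\cI_{W_j\subset (k-j)L}(-j)$ is then, by definition, a twist of a rank $0$ monomial object. I do not expect a genuine obstacle here: the whole argument is the bookkeeping identity identifying $s_{j+a}\le s_j$ with $\mu_a(\cI_{W_j})\le\mu_m(\cI_{W_j})$, so the only things to be careful about are the re-indexing between $D_Z$ and $D_{W_j}$, the resulting formulas $w_j=S_m$ and $w_{j+a}=S_m-S_a$, and the check that each $j+a$ is an admissible horizontal index.
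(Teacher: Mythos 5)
Your proposal is correct and follows essentially the same route as the paper: both arguments observe that the hypothesis $s_{j+a}\le s_j$ (equivalently $\mu_j(F^\bullet)\le\mu_{j+a}(F^\bullet)$) for the admissible horizontal indices is, after the re-indexing $w_{j+a}=w_j-S_a$, term-by-term equivalent to the purity inequalities $\mu_a(\cI_{W_j})\le\mu_{k-j}(\cI_{W_j})$. The paper phrases this as substituting the resulting lower bound on $w_{j+a}$ into the purity inequality and finding an equality, while you reduce both sides to the common normal form $mS_a-aS_m\le\tfrac12 am(m-a)$; these are the same computation.
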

\begin{proof}
The hypothesis gives $\mu_\opt(F^\bullet) = \mu_j(F^\bullet)$.  The block diagram $D_{W_j}$ has $r(D_{W_j})= k-j$ since $r(D_Z)=k$.
We must check that $\mu_{k-j}(\cI_{W_j})\geq \mu_{m}(\cI_{W_j})$ for each $1\leq m < k-j$.  The part of the block diagram of $W_j$ lying above the $m$th horizontal just corresponds to the scheme $W_{j+m}$.  This inequality therefore amounts to showing \begin{equation}\label{purityIneq} \frac{w_j}{k-j} + \frac{k-j-3}{2} \geq \frac{w_j - w_{j+m}}{m} + \frac{m-3}{2}.\end{equation}  Since $\mu_\opt(F^\bullet)= \mu_j(F^\bullet)$, we are given the inequality $\mu_j(F^\bullet) \leq \mu_{j+m}(F^\bullet),$ or $$\frac{1}{2}(j+k) + \frac{w_j}{k-j}\leq \frac{1}{2}(j+m+k)+\frac{w_{j+m}}{k-j-m}.$$ Rearrange this to get $$w_{j+m} \geq (k-j-m)\left(\frac{w_j}{k-j}-\frac{1}{2}m\right).$$ Substituting this estimate into Inequality \ref{purityIneq} yields an equality, so the required inequality is true.
\end{proof}

\section{Gieseker semistability of monomial objects}\label{sec-Gieseker}

We say that an object $F$ of a category $\cA_s$ is \emph{Gieseker semistable} if it is $\cZ_{s,t}$-semistable for all sufficiently large $t$.  This notion is independent of the choice of $s$.  If $F$ is a pure sheaf, then this notion of Gieseker semistability coincides with the usual notion of Gieseker semistability of sheaves.

The goal of this section is to prove the following result.

\begin{theorem}\label{thm-Gieseker}
Monomial objects are Gieseker semistable.
\end{theorem}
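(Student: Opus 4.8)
The plan is to prove Gieseker semistability separately for each of the three types of monomial objects, using induction on the degree $n$ of the underlying monomial scheme (the same inductive parameter governing the entire argument). Since Gieseker semistability of $F$ in some $\cA_s$ is equivalent to $\cZ_{s,t}$-semistability for $t \gg 0$, and since for $t \gg 0$ the slope $\mu_{s,t}$ is governed by the leading-order behavior of the central charge, I would first reinterpret the condition concretely. For a rank $0$ object $F = \cI_{Z\subset kL}$, Gieseker semistability is just ordinary Gieseker semistability of the pure $1$-dimensional sheaf $\cI_{Z\subset kL}$; the key point is to show this is \emph{equivalent} to the combinatorial horizontal purity condition $\mu_i(\cI_Z)\le \mu_k(\cI_Z)$ built into the definition of a rank $0$ monomial object. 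This is the heart of the matter: a subsheaf of $\cI_{Z\subset kL}$ of maximal reduced Hilbert polynomial may be taken to be saturated, hence of the form $\cI_{W\subset k'L}(-j)$ for a sub-block-diagram, and comparing reduced Hilbert polynomials of such subsheaves to that of $\cI_{Z\subset kL}$ translates exactly into comparing the horizontal slopes $\mu_i$ (after accounting for the twist). So horizontal purity $\iff$ Gieseker semistability, which also supplies the promised alternate characterization referenced after the definition.

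For rank $1$ objects $\cI_Z$, Gieseker semistability is automatic: an ideal sheaf of a $0$-dimensional scheme is always a torsion-free (indeed reflexive-up-to-codimension) rank $1$ sheaf, hence stable, so there is nothing to prove beyond noting that rank $1$ torsion-free sheaves on a surface are automatically Gieseker stable. The real work is rank $-1$ objects $F^\bullet = (\OO_{\P^2}(-k)\oplus\OO_{\P^2}(-i)\xrightarrow{(y^k,x^i)}\cI_Z)$. Here I would argue as follows. First, identify $F^\bullet$ concretely: its cohomology sheaves are $\rH^{-1}(F^\bullet)=\OO_{\P^2}(-k-i)$ and $\rH^0(F^\bullet)=\cI_{Z\subset kL\cap iL'}$, so $F^\bullet$ sits in a triangle $\OO_{\P^2}(-k-i)[1]\to F^\bullet\to \cI_{Z\subset kL\cap iL'}\to$. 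In the category $\cA_s$ (for $s$ to the right of $s=-k-i$) both of these are objects, and as $t\to\infty$ the $\cZ_{s,t}$-slope is dominated by the rank, then by $\ch_1$. A destabilizing subobject $A\hookrightarrow F^\bullet$ in $\cA_s$ gives a long exact cohomology sequence; analyzing it, $\rH^{-1}(A)$ is a subsheaf of $\OO_{\P^2}(-k-i)$ in $\cF_s$ and $\rH^0(A)$ fits into an extension involving a subsheaf of $\cI_{Z\subset kL\cap iL'}$ and a quotient of $\OO_{\P^2}(-k-i)$. The strategy is to bound the Gieseker slope of $A$ using (a) that line bundle twists $\OO(-k-i)$ are stable of the correct slope and (b) the horizontal/vertical purity inequalities for the sub-block-diagrams of $Z$, which are exactly the hypotheses encoded in the rank $-1$ object's defining data ($\ell>0$ full rows of length $i$, $\ell'>0$ full columns of length $k$). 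One clean way to organize this: reduce to saturated subobjects, enumerate the possible shapes of a saturated subobject of $F^\bullet$ in $\cA_s$ (these will again be twisted monomial objects of lower degree coming from sub-diagrams), and invoke the inductive hypothesis together with the numerical computations of \S\ref{sec-invariants} to see that none of them can have strictly larger reduced Hilbert polynomial.

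\textbf{Main obstacle.}
I expect the main difficulty to be the rank $-1$ case, specifically controlling all possible destabilizing subobjects of the complex $F^\bullet$ inside the tilted heart $\cA_s$. Unlike the sheaf case, subobjects in $\cA_s$ need not be subsheaves of $\rH^0(F^\bullet)$, and one must carefully track both cohomology sheaves simultaneously; the bookkeeping of which twisted monomial sub-diagrams can occur, and the verification that each obeys the right Hilbert-polynomial inequality, is where the combinatorics of block diagrams (the purity inequalities, handled as in the proof of Proposition \ref{prop-revisit}) must be combined with the t-structure formalism. A secondary technical point is checking that the classification of subobjects is genuinely exhaustive — i.e. that after saturation there are no ``exotic'' subobjects outside the expected list — which I would address by a direct argument that any subobject's cohomology sheaves, being subsheaves and sub-quotients of monomial (ideal) sheaves and line bundles, are themselves forced to be of monomial type. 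Once this structural fact is in hand, the inequalities follow formally from the numerical setup already recorded.
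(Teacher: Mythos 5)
Your treatment of the rank $1$ and rank $0$ cases matches the paper's in substance: ideal sheaves are automatically Gieseker stable, and for $F=\cI_{Z\subset kL}$ one bounds the reduced Hilbert polynomial of an arbitrary subsheaf by that of $\cI_{Z\cup jL\subset kL}$ (any homogeneous ideal of $\C[x,y,z]/(y^k)$ has the form $y^j\cdot J'$), computes $p(\cI_{Z\subset kL})=x-\mu_k(\cI_Z)$ and $p(\cI_{Z_j\subset jL})=x-\mu_j(\cI_Z)$, and invokes horizontal purity. One caveat: a saturated subsheaf need not itself be monomial, so the reduction should be phrased as a bound by the distinguished monomial subsheaves $\cI_{Z\cup jL\subset kL}$ rather than as a classification of all subsheaves; this is a harmless rewording of what you propose.

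The genuine gap is in your rank $-1$ strategy. You propose to enumerate the saturated subobjects of $F^\bullet$ in $\cA_s$, assert that they are ``twisted monomial objects of lower degree coming from sub-diagrams,'' and then invoke induction together with the numerical inequalities of \S\ref{sec-invariants}. That classification is false: $\rH^0$ of a subobject is only constrained to be an extension of a subsheaf of $\cI_{Z\subset kL\cap iL'}$ by a quotient of $\OO_{\P^2}(-k-i)$, and subsheaves of a monomial ideal sheaf are arbitrary twisted ideal sheaves, not monomial ones; your proposed patch (that the cohomology sheaves are ``forced to be of monomial type'') cannot work, and the purity inequalities only control the monomial sub-diagrams in any case. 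Fortunately the enumeration is also unnecessary. For $t\gg 0$ the slope $\mu_{s,t}$ already separates objects coarsely: negative-rank objects and $0$-dimensional sheaves have $\mu_{s,t}\to+\infty$ (resp.\ $\equiv\infty$), $1$-dimensional sheaves have $\mu_{s,t}\to 0$, and positive-rank objects have $\mu_{s,t}\to-\infty$. The paper's proof takes a destabilizing subobject $A^\bullet\subset F^\bullet$, examines the long exact sequence of cohomology sheaves, and uses that $\rH^{-1}$ of any object of $\cA_s$ is torsion-free to conclude that either $\rH^{-1}(A^\bullet)=0$, so $A^\bullet$ is a sheaf which is positive-rank, $1$-dimensional, or $0$-dimensional (the first two cannot destabilize for large $t$, and the third is excluded because $\Hom(\OO_p,F^\bullet)=0$); or $\rH^{-1}(A^\bullet)=\OO_{\P^2}(-k-i)$, in which case the quotient $B^\bullet$ is forced to be a $0$-dimensional sheaf of infinite slope and hence cannot be a destabilizing quotient. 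No induction, no block-diagram combinatorics, and no classification of subobjects enters; you should replace your rank $-1$ plan with this formal argument.
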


The Gieseker semistability condition ensures that a monomial object which is destabilized along a semicircular wall is $\cZ_{s,t}$-semistable at all points lying outside the semicircle \cite{ABCH}.  In particular, if another semicircular potential wall lies outside the wall where a monomial object is destabilized, then that object is semistable along the larger wall.

As usual, there are separate arguments to be given depending on the rank of the monomial object.  The rank $1$ case is clear, since ideal sheaves are Gieseker stable.

\begin{proposition}
Rank $0$ monomial objects are Gieseker semistable sheaves.
\end{proposition}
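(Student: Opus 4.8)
The plan is to characterize Gieseker semistability of the rank $0$ monomial object $\cI_{Z\subset kL}$ combinatorially, and to show that the horizontal purity condition is exactly what is needed. Recall that $\cI_{Z\subset kL}$ is a pure $1$-dimensional sheaf supported on the fat line $kL$, with Chern character $(0,k,-\frac{k^2}{2}-n)$. For a pure $1$-dimensional sheaf, Gieseker semistability in the sense defined above agrees with the classical notion: $\cI_{Z\subset kL}$ is semistable if and only if every proper quotient (equivalently, every proper subsheaf) has reduced Hilbert polynomial $\geq p_{\cI_{Z\subset kL}}$ (resp. $\leq$), which for $1$-dimensional sheaves reduces to a comparison of slopes $\chi/\mathrm{multiplicity}$. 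So the task is to classify the destabilizing subsheaves.

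First I would reduce to a manageable family of subsheaves. Since $\cI_{Z\subset kL}\subset \OO_{kL}$ is a torsion-free $\OO_{kL}$-module (it has no $\OO_{jL}$-torsion for $j<k$, which is essentially the definition of working inside $kL$ with $r(D)=k$), any subsheaf is again such a module, and destabilizing subsheaves can be taken to be saturated. I claim that every such saturated subsheaf is of the form $\cI_{W'_i\subset kL}(-i)$ — equivalently $x^i\cI_{W'_i\subset kL}$ inside $\cI_{Z\subset kL}$ — for some $i$ with $\ell'\le i\le c(D)$, these being exactly the subobjects appearing in the potential destabilizing sequences of \S\ref{ss-rankZeroInv}. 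Concretely, a subsheaf must be invariant under the two obvious operators; the monomial/torus-invariance argument (specializing a general subsheaf to a monomial one via the $\C^*$-action, using that semistability is preserved under such specialization and the locus of semistable sheaves is open) lets me restrict attention to monomial subsheaves, and these are forced to have the stated shape because $\cI_{Z\subset kL}$ has $x^i$ as the minimal $x$-power killing each column once we pass far enough right. Then the slope of $\cI_{W'_i\subset kL}(-i)$ is computed from its Chern character $(0,k,-\frac{k^2}{2}-w_i' - ki)$, and comparing its reduced Hilbert polynomial to that of $\cI_{Z\subset kL}$ translates, after Riemann-Roch bookkeeping, into an inequality among the horizontal slopes $\mu_i(\cI_Z)$ — precisely the inequality $\mu_i(\cI_Z)\le\mu_k(\cI_Z)$ that defines horizontal purity. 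Thus horizontal purity of $Z$ forces every relevant subsheaf to be non-destabilizing, giving semistability; and conversely a violation of purity produces an explicit destabilizing subsheaf, so the two conditions are equivalent (this equivalence is the content promised in Theorem \ref{thm-Gieseker} and \S\ref{subsec-purity}).

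The main obstacle is the first reduction: showing that it suffices to test semistability against the distinguished subsheaves $\cI_{W'_i\subset kL}(-i)$ rather than against an arbitrary subsheaf of a torsion $\OO_{kL}$-module, which is a genuinely non-reduced scheme-theoretic statement. The torus-action/semicontinuity argument handles the passage from arbitrary to monomial subsheaves cleanly, but one must still check that a monomial $\OO_{kL}$-submodule of $\cI_{Z\subset kL}$ is necessarily a twist of a rank $0$-monomial-object ideal — i.e., that its block diagram is itself a "column-type" subdiagram sitting to the right of the $i$th vertical line. This is a finite combinatorial verification on block diagrams: a monomial submodule of $\cI_{Z\subset kL}$ corresponds to an order ideal in the complement of $D_Z$ inside the strip of height $k$ that is stable under multiplication by $x$ and $y$, and one checks directly that such an order ideal is cut out by a single power $x^i$ together with the boundary of $D_Z$, which is exactly $\cI_{W'_i\subset kL}(-i)$. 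Once this is in hand, the rest is the Riemann-Roch comparison, which is routine and already essentially recorded in \S\ref{ss-rankZeroInv} and Proposition \ref{orthogonalInvariantsProp}.
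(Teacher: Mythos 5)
There is a genuine gap, and it comes from conflating two different ambient categories. The objects $\cI_{W_i'}(-i)$ from \S\ref{ss-rankZeroInv} are rank-one sheaves on $\P^2$; they are subobjects of $\cI_{Z\subset kL}$ only in the tilted hearts $\cA_s$ (the paper explicitly notes that the map $\cI_{W_i'}(-i)\xrightarrow{x^i}\cI_{Z\subset kL}$ has kernel $\OO_{\P^2}(-k-i)$, so it is not injective as a map of sheaves). Gieseker semistability of the pure $1$-dimensional sheaf $\cI_{Z\subset kL}$ must be tested against honest coherent subsheaves, which are torsion sheaves supported on $kL$. The image of $x^i\colon\cI_{W_i'}(-i)\to\cI_{Z\subset kL}$ has Chern character $(0,k,\ast)$, i.e.\ full multiplicity $k$, so its quotient is zero-dimensional and it can never violate semistability. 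Your proposed family of test subsheaves therefore consists entirely of harmless ones, and your argument would ``prove'' that $\cI_{Z\subset kL}$ is semistable for \emph{every} monomial $Z$ with $r(D)=k$, which is false (e.g.\ $k=2$ with rows of lengths $5$ and $1$ gives $\mu_1=4>\mu_2=\frac{5}{2}$ and the quotient $\cI_{Z_1\subset L}$ destabilizes). Relatedly, the inequality you extract at the end cannot be the horizontal purity condition: your subsheaves are indexed by vertical slices $W_i'$, whereas horizontal purity compares the row-slopes $\mu_j(\cI_Z)$ for $1\leq j\leq k$.

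The subsheaves that actually matter are those of multiplicity $k-j<k$, i.e.\ ideals of $\C[x,y,z]/(y^k)$ of the form $y^j\cdot J'$. For fixed $j$ the maximal such subsheaf of $\cI_{Z\subset kL}$ is $\cI_{Z\cup jL\subset kL}=y^j\cI_{W_j\subset(k-j)L}$, and the exact sequence
$$0\to\cI_{Z\cup jL\subset kL}\to\cI_{Z\subset kL}\to\cI_{Z_j\subset jL}\to 0$$
together with the computation $p(\cI_{Z\subset kL})=x-\mu_k(\cI_Z)$ and $p(\cI_{Z_j\subset jL})=x-\mu_j(\cI_Z)$ shows that semistability is exactly the horizontal purity condition $\mu_j(\cI_Z)\leq\mu_k(\cI_Z)$ for $0<j<k$. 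Your idea of reducing to monomial subsheaves by the torus action is not needed for this (the factorization $J=y^jJ'$ already does the reduction), but even granting it, the classification of the monomial submodules you arrive at is the wrong one: the ``dangerous'' monomial submodules are the horizontal slices $y^j\cdot(\cdots)$, not twists by powers of $x$.
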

\begin{proof}
Consider a rank $0$ monomial object $\cI_{Z\subset kL}$.
Any subsheaf of $\cI_{Z\subset kL}$ is itself an ideal sheaf, so corresponds to a closed subscheme $Z'\subset kL$.  Any homogeneous ideal $$J \subset \C [x,y,z]/(y^k)$$ is of the form $y^j\cdot J'$ for some homogeneous ideal $J'$ such that the subscheme of $(k-j)L$ defined by $J'$ is zero-dimensional.  Denoting by $\tilde J$ the ideal sheaf on $kL$ corresponding to the ideal $J$, there is an inclusion $\tilde J \subset \cI_{Z\subset kL}$ if and only if the subscheme defined by $J'$ contains the scheme $W_j$ constructed from $Z$.  Then if  $\tilde J \subset \cI_{Z\subset kL}$, we conclude that the reduced Hilbert polynomial satisfies $p(\tilde J) \leq p( \cI_{Z \cup jL \subset kL}).$  Thus it suffices to check that $p(\cI_{Z\cup jL \subset kL}) \leq p(\cI_{Z\subset kL})$ for $0 < j < k$.

We have an exact sequence $$ 0 \to \cI_{Z\cup jL \subset kL} \to \cI_{Z \subset kL} \to \cI_{Z_j \subset jL} \to 0. $$ It is enough to show that $p(\cI_{Z\subset kL}) \leq p(\cI_{Z_j\subset jL})$.  Using the sequences $$\begin{array}{ccccccccc} 0 &\to &\cI_{Z\subset kL} &\to &\OO_{kL} &\to &\OO_Z &\to & 0 \\ 0& \to & \OO_{\P^2}(-k) &\to &\OO_{\P^2} &\to &\OO_{kL} &\to & 0\end{array}$$
we find that the (ordinary) Hilbert polynomial of $\cI_{Z\subset kL}$ is $$P_{\OO_{kL}}(x) - P_{\OO_Z}(x)=P_{\OO_{\P^2}}(x) - P_{\OO_{\P^2}(-k)}(x)-n= kx - \left(n+\frac{k^2-3}{2}\right),$$ so the reduced Hilbert polynomial of $\cI_{Z\subset kL}$ is $$p(\cI_{Z\subset kL}) = x-\mu_k(\cI_Z).$$
Likewise, we find $$p(\cI_{Z_j\subset jL}) = x- \mu_j(\cI_{Z_j}) = x - \mu_j(\cI_Z).$$ The required inequality then follows from the horizontal purity of $Z$.
\end{proof}

\begin{proposition}
Rank $-1$ monomial objects are Giesker stable.
\end{proposition}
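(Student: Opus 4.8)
The plan is to prove the stronger statement that a rank $-1$ monomial object $F^\bullet$ is $\cZ_{s,t}$-stable for all $t\gg 0$ (equivalently, Gieseker stable), by ruling out destabilizing subobjects in the large-volume limit, just as the rank $1$ case is trivial and the rank $0$ case was handled by a reduced-Hilbert-polynomial comparison. Since Gieseker semistability does not depend on $s$, I would fix any $s>-(k+i)$, so that $F^\bullet\in\cA_s$, and use the exact sequence
$$0\to\OO_{\PP^2}(-k-i)[1]\to F^\bullet\to\cI_{Z\subset kL\cap iL'}\to 0$$
in $\cA_s$, where $kL\cap iL'=V(x^i,y^k)$ has length $ki$ and $\cI_{Z\subset kL\cap iL'}$ is $0$-dimensional of length $ki-n>0$ by nontriviality. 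In the invariants $(\mu,\Delta)$ one has $\mu(F^\bullet)=-(k+i)$ and $\Delta(F^\bullet)=ki-n$, so $\mu_{s,t}(F^\bullet)\to+\infty$ like $\tfrac{t}{2(s+k+i)}$ as $t\to\infty$. Since a Gieseker semistable object is $\cZ_{s,t}$-semistable outside a single wall, it suffices to show that for $t\gg0$ no proper nonzero subobject $A\subset F^\bullet$ in $\cA_s$ has $\mu_{s,t}(A)\geq\mu_{s,t}(F^\bullet)$.

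The first step is to show $F^\bullet$ has no $0$-dimensional subobjects. For a $0$-dimensional sheaf $T$, I would compute $\Hom_{D^b}(T,F^\bullet)$ from the two-term presentation $F^\bullet=[\OO_{\PP^2}(-k)\oplus\OO_{\PP^2}(-i)\xrightarrow{(y^k,x^i)}\cI_Z]$: the hyper-$\Ext$ spectral sequence exhibits it as a subquotient of $\Hom(T,\cI_Z)\oplus\Ext^1\!\bigl(T,\OO_{\PP^2}(-k)\oplus\OO_{\PP^2}(-i)\bigr)$, and both summands vanish ($\cI_Z$ is torsion-free while $T$ is torsion; and $\Ext^{<2}(T,\mathcal L)=0$ for a line bundle $\mathcal L$ since $T$ has codimension $2$). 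Hence $\Hom_{\cA_s}(T,F^\bullet)=0$, so no subobject of $F^\bullet$ is a $0$-dimensional sheaf; in particular any subobject of $F^\bullet$ that happens to be a sheaf is pure. This vanishing is the one genuinely structural input, and it is the analogue of "$\cI_Z$ is stable" in the rank $1$ case.

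Next I would classify the remaining subobjects. Given $0\to A\to F^\bullet\to B\to 0$ in $\cA_s$, taking cohomology gives $\rH^{-1}(A)\hookrightarrow\OO_{\PP^2}(-k-i)$ with $\OO_{\PP^2}(-k-i)/\rH^{-1}(A)$ embedding into the torsion-free sheaf $\rH^{-1}(B)\in\cF_s$; being torsion, this quotient vanishes, so $\rH^{-1}(A)$ is either $0$ or all of $\OO_{\PP^2}(-k-i)$. If $\rH^{-1}(A)=0$, then $A$ is a sheaf in $\cQ_s$, hence pure by the previous step: when $\rk A>0$ the condition $\mu_{\min}(A)>s$ forces $\mu(A)>s$, whence $\mu_{s,t}(A)\to-\infty$; when $A$ is pure of dimension $1$ one has $\mu_{s,t}(A)\to 0$; either way $\mu_{s,t}(A)<\mu_{s,t}(F^\bullet)$ for $t\gg0$. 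If instead $\rH^{-1}(A)=\OO_{\PP^2}(-k-i)$, then $\rH^{-1}(B)$ embeds in $\rH^0(A)$; comparing $\mu_{\max}(\rH^{-1}(B))\leq s<\mu_{\min}(\rH^0(A))$ forces $\rH^{-1}(B)=0$, and the cohomology sequence then presents $A$ as an extension of a subsheaf $A'\subset\cI_{Z\subset kL\cap iL'}$ by $\OO_{\PP^2}(-k-i)[1]$; properness of $A$ forces $A'\subsetneq\cI_{Z\subset kL\cap iL'}$, since a subobject with the full Chern character of $F^\bullet$ must equal $F^\bullet$. Then $A$ has the same slope $\mu(A)=-(k+i)$ as $F^\bullet$ but strictly smaller discriminant $\Delta(A)=\operatorname{length}(A')<ki-n=\Delta(F^\bullet)$, and at equal slope with $\mu-s<0$ the $\cZ_{s,t}$-slope is increasing in $\Delta$, so $\mu_{s,t}(A)<\mu_{s,t}(F^\bullet)$ for every such $(s,t)$.

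Assembling these cases shows that no proper nonzero subobject destabilizes $F^\bullet$ for $t\gg0$, with all inequalities strict, hence $F^\bullet$ is $\cZ_{s,t}$-stable for $t\gg0$. I expect the main obstacle to be the bookkeeping in the subobject classification of the third paragraph — specifically, confirming that $\rH^{-1}(A)=\OO_{\PP^2}(-k-i)$ genuinely forces $\rH^{-1}(B)=0$ via the slope comparison (this is where purity of sheaf subobjects, from the second step, is used), and pinning down exactly when the resulting extension $A$ is proper — together with citing the standard fact that checking $\cZ_{s,t}$-stability for $t\gg0$ reduces to a bounded family of potential destabilizers whose slopes can be compared asymptotically as above.
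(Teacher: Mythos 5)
Your proposal is correct and follows essentially the same route as the paper: the same dichotomy on $\rH^{-1}(A)\subset\OO_{\PP^2}(-k-i)$ via torsion-freeness of $\rH^{-1}(B)$, the same vanishing $\Hom_{D^b}(T,F^\bullet)=0$ for $0$-dimensional $T$ obtained from the stupid-truncation triangle, and the same asymptotic slope comparison as $t\to\infty$. The only (immaterial) difference is in the second case, where you conclude via $\Delta(A)<\Delta(F^\bullet)$ at equal slope while the paper observes that the quotient $B$ is a nonzero $0$-dimensional sheaf with $\mu_{s,t}(B)\equiv\infty$; these are equivalent by the seesaw property of the exact sequence.
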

\begin{proof}
Let $$F^\bullet := (\OO_{\P^2}(-k) \oplus \OO_{\P^2}(-i) \to \cI_Z)$$ be a rank $-1$ monomial object, and let $s> - i-k$, so that $F^\bullet\in \cA_s$.  If $F^\bullet$ is not $\cZ_{s,t}$-stable for all large $t$, there must be a destabilizing subobject $A^\bullet \subset F^\bullet $ such that $\mu_{s,t}(A^\bullet) > \mu_{s,t}(F^\bullet)$ for all large $t$.  Consider the exact sequence in $\cA_s$ $$0\to A^\bullet \to F^\bullet \to B^\bullet \to 0,$$ which induces an exact sequence of cohomology sheaves $$0\to \rH^{-1}(A^\bullet) \fto\alpha \OO_{\P^2}(-k-i) \to \rH^{-1}(B^\bullet) \to \rH^0(A^\bullet) \to \cI_{Z\subset kL \cap iL'} \to \rH^0(B^\bullet)\to 0.$$ Then $\rH^{-1}(A^\bullet)$ is a subsheaf of $\OO_{\P^2}(-k-i)$, and the cokernel of $\alpha$ is a subsheaf of $\rH^{-1}(B^\bullet)$.  Since $\rH^{-1}(B^\bullet)$ is torsion-free (as it is in $\cF_s$), the cokernel of $\alpha$ must be torsion-free.  Hence, either $\rH^{-1}(A^\bullet) = \OO_{\P^2}(-k-i)$ and $\alpha$ is an isomorphism; or $\rH^{-1}(A^\bullet) = 0$.

First, suppose $\rH^{-1}(A^\bullet) = 0$.  Since the rank of $F^\bullet$ is $-1$, we have $\mu_{s,t} (F^\bullet) \to \infty$ as $t\to \infty$.  On the other hand, as $t\to \infty$ we have either
\begin{enumerate}
\item $\mu_{s,t}(A^\bullet) \to -\infty$ if $\rH^0(A^{\bullet})$ has positive rank,
\item $\mu_{s,t}(A^\bullet) \to 0$ if $\rH^0(A^\bullet)$ is $1$-dimensional, or
\item $\mu_{s,t}(A^\bullet) \equiv \infty$ if $\rH^0(A^\bullet)$ is $0$-dimensional.
\end{enumerate} 
Thus, in the first two cases $A^\bullet$ cannot destabilize $F^\bullet$ for large $t$.  The final case also cannot happen because there are no nonzero homomorphisms from a $0$-dimensional sheaf to $F^\bullet$.  To see this, it suffices to show that $\Hom_{D^b(\P^2)}(\OO_p,F^\bullet)=0$ for any point $p$.  There is a distinguished triangle $$\cI_Z \to F^\bullet \to (\OO_{\P^2}(-i)\oplus \OO_{\P^2}(-k))[1]\to \cdot,$$ and applying $\Hom_{D^b(\P^2)}(\OO_p,-)$ shows the required vanishing.

Next, suppose $\rH^{-1}(A^\bullet) = \OO_{\P^2}(-k-i)$ and $\alpha$ is an isomorphism.  The previous long exact sequence of cohomology sheaves induces a short exact sequence $$0\to \rH^{-1}(B^\bullet) \to \rH^0(A^\bullet) \to T\to 0,$$ where $T \subset \cI_{Z\subset kL\cap iL'}$ is a $0$-dimensional sheaf.  Then $\rH^{-1}(B^\bullet)$ and $\rH^0(A^\bullet)$ have the same rank $r$ and first Chern class $c_1$.  If $r \neq 0$, these sheaves have the same slope, which is absurd since $\rH^{-1}(B^\bullet)\in \cF_s$ and $\rH^0(A^\bullet) \in \cQ_s$.  Thus $r=0$, and since $\rH^{-1}(B^\bullet)$ is torsion-free, we find $\rH^{-1}(B^\bullet)=0$.  Then $B^\bullet$ is a (nonzero) $0$-dimensional sheaf, so $\mu_{s,t}(B^\bullet) \equiv \infty$.  This contradicts the assumption that $B^\bullet$ was a destabilizing quotient object of $F^\bullet$.
\end{proof}

\section{Nesting of Bridgeland walls for monomial objects}\label{sec-nesting}

Int this section, we show that the conjectured destabilizing walls for monomial objects are the actual destabilizing walls.  Recall that  a monomial object is \emph{trivial} if it is either a line bundle (in the rank $1$ case) or a shift of a line bundle (in the rank $-1$ case).  For simplicity, we will also call twisted trivial monomial objects trivial.  Rank $0$ monomial objects are never trivial in this sense.  Trivial monomial objects are always $\cZ_{s,t}$-stable if they are in $\cA_s$ \cite{ABCH}.

\begin{theorem}
Let $F$ be a nontrivial monomial object, and consider the largest potential destabilizing wall for $F$, as constructed in \S \ref{sec-monomialObjects}.  Then $F$ is destabilized along this wall, and $F$ is $\cZ_{s,t}$-semistable at all points on and outside this wall.

Precisely, consider the distinguished triangle $$A\to F\to B\to \cdot.$$ corresponding to the wall.  Then the wall $W(A,F)$ is nonempty, and there is an exact sequence $$0\to A \to F \to B\to 0$$ of semistable objects of the categories $\cA_s$ along the wall.  The object $A$ is either trivial or it is a twisted monomial object destabilized along a wall nested inside $W(A,F)$.  Similarly, the monomial object $B$ is either trivial or it is a monomial object destabilized along a wall nested inside $W(A,F)$.
\end{theorem}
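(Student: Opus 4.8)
The plan is to prove the theorem by induction on the degree of the monomial scheme underlying $F$ (equivalently, on the number of boxes in its block diagram). The base case is that of the trivial monomial objects --- line bundles and their shifts --- which are $\cZ_{s,t}$-stable whenever they lie in $\cA_s$ by \cite{ABCH}, so there is nothing to check. For the inductive step, fix a nontrivial monomial object $F$ (of rank $1$, $0$, or $-1$) and let $A\to F\to B\to\cdot$ be the distinguished triangle attached to the largest potential wall, as constructed in \S\ref{sec-monomialObjects}. By that discussion, together with Propositions \ref{rank1quotientPure} and \ref{prop-revisit}, $A$ is trivial or a twist of a monomial object and $B$ is trivial or a monomial object, and in the nontrivial cases the underlying schemes have strictly smaller degree than that of $F$. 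Hence the inductive hypothesis applies to $A$ and $B$: each is $\cZ_{s,t}$-semistable on and outside a single semicircular wall (call these $W_A$, $W_B$), which moreover is its largest potential wall --- or it is trivial, hence semistable whenever it lies in its category.

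First I would record that along the wall $W:=W(A,F)$ all three objects lie in $\cA_s$, using the explicit descriptions of their cohomology sheaves from \S\ref{sec-monomialObjects} (in the rank $1$ case, for instance, $\cI_{W_k}(-k)\xrightarrow{y^k}\cI_Z$ is an honest injection of sheaves with cokernel $\cI_{Z_k\subset kL}$; the rank $0$ and rank $-1$ cases are handled by the mapping-cone diagrams there), together with the location of $W$ relative to the vertical walls $s=0$, $s=-i-k$. The same bookkeeping shows the triangle is an honest short exact sequence $0\to A\to F\to B\to 0$ in $\cA_s$ for $s$ in a neighborhood of $W$. By the numerical computations of \S\ref{sec-invariants} and Proposition \ref{orthogonalInvariantsProp}, the class $\zeta_{\opt}$ orthogonal to the Chern characters of all three terms witnesses that $\mu_{s,t}(A)=\mu_{s,t}(F)=\mu_{s,t}(B)$ for $(s,t)\in W$; a routine sign computation (as in \cite{ABCH}) then shows $\mu_{s,t}(A)>\mu_{s,t}(F)$ at points just inside $W$, so $F$ is not $\cZ_{s,t}$-semistable there. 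In particular $W$ is nonempty, since a wall with imaginary virtual radius could not carry this destabilization.

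The core of the argument is the \emph{nesting} claim: $W_A$ and $W_B$ lie on or inside $W$. This is where the combinatorics of block diagrams enters, and I expect it to be the main obstacle. Since $W_A$ is the largest potential wall of $A$ and $W_B$ that of $B$, and since the relevant potential walls form concentric or nested families whose ordering is governed by the radii computed in \S\ref{sec-invariants}, it suffices to show that every radius arising from a sub-decomposition of $A$ or of $B$ is at most the radius of $W$. In each of the three cases this should reduce, exactly as in the proof of Proposition \ref{prop-revisit}, to an inequality among the box-counts $w_j$, $w_j'$ and $n$ that is forced by the defining maximality of $W$ for $F$ --- e.g.\ the fact that the index realizing $\mu_{\opt}(\cI_Z)$ beats every competing horizontal and vertical slope yields precisely the monotonicity of the $w_j$ needed to bound the sub-radii. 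Once nesting is established, $A$ and $B$ are $\cZ_{s,t}$-semistable at every point of $W$, so $F$, being an extension in $\cA_s$ of two semistable objects of the same $\cZ_{s,t}$-phase, is $\cZ_{s,t}$-semistable along $W$.

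Finally, to conclude that $W$ is exactly the destabilizing wall of $F$ and that $F$ is $\cZ_{s,t}$-semistable on and outside it, I would invoke Gieseker semistability. By Theorem \ref{thm-Gieseker} and \cite{ABCH}, $F$ is $\cZ_{s,t}$-semistable outside a single semicircular wall $W_F$ and fails to be semistable strictly inside it. We have shown $F$ is semistable on $W$ but not just inside $W$; the only semicircle with both properties is $W_F=W$. Combining this with the nesting and with the identification of $A$ and $B$ as trivial or smaller monomial objects (from \S\ref{sec-monomialObjects}) completes the inductive step, and hence the theorem.
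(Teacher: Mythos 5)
Your proposal follows the same overall strategy as the paper (induction on the complexity of the monomial object, reduction to a nesting statement for the walls of $A$ and $B$, and Gieseker semistability to propagate semistability outside the destabilizing wall), but it has genuine gaps. The most serious one is that the nesting claim --- which you correctly identify as ``the main obstacle'' --- is never actually proved: you assert that it ``should reduce to an inequality among the box-counts forced by the maximality of $W$.'' In the paper this is the technical heart of the whole argument and occupies six separate lemmas (Lemmas \ref{rankOneNest1}--\ref{rankMinusOneNest2}), each requiring a different manipulation: for instance the rank $1$ subobject case rests on the identity $(i+k)\mu_{i+k}(\cI_Z)=k\mu_k(\cI_Z)+i\mu_i(\cI_{W_k})+ik$, while the rank $1$ quotient case converts $\mu_i'(\cI_Z)\leq\mu_k(\cI_Z)$ into a lower bound on $w_i'$ and substitutes it into a difference of discriminants, and the rank $0$ cases use the horizontal purity of $Z$ in an essential way, not merely the maximality of the wall for $F$. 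A proof that does not carry out these computations is incomplete at its central step.

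Two further points would need repair. First, your induction is not well-founded as stated: it is false that the underlying scheme of a nontrivial $B$ always has strictly smaller degree. When a rank $1$ object $\cI_Z$ is destabilized with $k=r(D_Z)$ the subobject is the trivial $\OO_{\P^2}(-k)$ and the quotient is $\cI_{Z\subset kL}$ with the \emph{same} underlying scheme; similarly a rank $0$ object destabilized with $i=c(D)$ has as quotient a rank $-1$ object on the same scheme. The paper orders the induction lexicographically (degree first, then rank $1\to 0\to -1$), since only for rank $-1$ objects do both pieces strictly drop in degree. Second, your nonemptiness argument is circular: you deduce that $W(A,F)$ is nonempty from the destabilization of $F$ ``just inside $W$,'' which presupposes that $W$ has an interior. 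The paper instead deduces nonemptiness from nesting plus induction (a nontrivial $A$ or $B$ is, by induction, destabilized along a nonempty wall nested inside $W(A,F)$, forcing $W(A,F)$ to be nonempty), and checks directly the one case where both $A$ and $B$ are trivial, namely the rank $0$ object of a complete intersection $kL\cap iL'$. Relatedly, verifying that $A$, $F$, $B$ all lie in the categories $\cA_s$ along the wall when $A$ or $B$ is trivial requires the four explicit inequalities of \S\ref{ss-exactness}, which your sketch of ``checking cohomology sheaves against the vertical walls'' does not supply.
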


The nonemptiness of the wall and the exactness of the sequence along the wall are both consequences of the nesting statements (see \S \ref{ss-nonemptiness} and \S \ref{ss-exactness}).  Since monomial objects are Gieseker semistable, they are all $\cZ_{s,t}$-semistable outside their destabilizing walls.  We conclude that proving the theorem reduces to proving the nesting statements, which forms the technical heart of the argument.

For the proof, we consider each type of nontrivial monomial object $F$ separately and proceed by induction on the degree $n$ of the monomial scheme in a monomial object.   We summarize facts which were discussed in section \ref{sec-monomialObjects} to describe the structure of the induction.
\begin{enumerate} \item Suppose $F$ has rank $-1$.  Its destabilizing subobject is a twisted rank $0$ object corresponding to a monomial scheme of smaller degree, and its destabilizing quotient object is a (possibly trivial) rank $-1$ object corresponding to a monomial scheme of smaller degree.  We conclude by induction.

\item Suppose $F$ has rank $0$.  The destabilizing subobject is a (potentially trivial) twisted rank $1$ object corresponding to a monomial scheme of smaller degree.  Its quotient object is a (potentially trivial) rank $-1$ object corresponding to a monomial scheme of no larger degree.  Using induction on the subobject and applying the rank $-1$ case to the quotient object allows us to conclude.

\item Finally, suppose $F$ has rank $1$.  Similarly to the rank $0$ case,  a rank $1$ object gives another ``smaller'' (potentially trivial) twisted rank $1$ object and a rank $0$ object which corresponds to a monomial scheme of no larger degree.  We use induction on the rank $1$ object and run the preceding argument for rank $0$ objects on the quotient.
\end{enumerate}

Thus to prove the nesting statements for $F$, we may assume inductively that the theorem is known for $A$ and $B$.  Before showing the nesting statements, let us show that the nonemptiness of the wall and exactness of the sequence follow from nesting.  The same inductive structure is useful for proving these facts.

\subsection{Nonemptiness of destabilizing walls}\label{ss-nonemptiness} Assume the nesting result holds.   Observe that the only way both the subobject $A$ and the quotient object $B$ of $F$ can be trivial is if $F = \cI_{Z\subset kL}$ has rank $0$ and $Z$ is a complete intersection scheme $kL\cap iL'$, in which case the destabilizing sequence is $$0\to \OO_{\P^2} (-i)\to F\to \OO_{\P^2}(-k-i)[1]\to 0.$$ Since $k>0$, the corresponding wall is nonempty.

If either $A$ or $B$ is a nontrivial twisted monomial object, then by induction (as described above) we conclude that object is destabilized along a nonempty wall.  By nesting, $F$ is destabilized along a nonempty wall as well.  

\subsection{Exactness of destabilizing triangles}\label{ss-exactness}  Again assume the nesting result holds, and consider the triangle $$A\to F \to B \to \cdot$$ If $A$ is nontrivial, then by induction it is destabilized along a nonempty wall nested inside $W(A,F)$.  Then it is in the categories $\cA_s$ along its destabilizing wall, so it is in some of the categories $\cA_s$ along $W(A,F)$ as well.  It is then in the categories along the entire wall.  The same argument applies to $B$.

We must also see that $F$ lies in the categories $\cA_s$ along the wall $W(A,F)$.  If $F$ has rank $0$, there is nothing to prove.  In the rank $1$ case, one uses the formulas of \S\ref{sec-invariants} to check the wall $W(A,F)$ lies to the left of the vertical wall for $F$, and in the rank $-1$ case the wall $W(A,F)$ lies to the right of the vertical wall for $F$.

Finally, we must consider what happens when either $A$ or $B$ is trivial.  There are four cases to consider.

\emph{Case 1: $A$ trivial, $F$ has rank $1$}.  Say $F = \cI_Z$.  All potential walls for $F$ left of the vertical wall for $F$ are semicircles with centers left of $-\sqrt{2\Delta(\cI_Z)}=-\sqrt{2n}.$  We must see that if $A = \OO_{\P^2}(-k)$ then $-k > -\sqrt{2n}$, or $k^2 < 2n$.  Up to swapping $x$ and $y$, the only way to have $A = \OO_{\P^2}(-k)$ is if $r(D_Z)=k$.  Then $$\mu_\opt(\cI_Z) = \mu_k(\cI_Z) = \frac{n}{k}+\frac{k-3}{2}$$ while $$\mu'_1(\cI_Z) = k-1,$$ and by maximality of the wall $\mu_k(\cI_Z) \geq \mu_1'(\cI_Z).$ The required inequality follows.

\emph{Case 2: $A$ trivial, $F$ has rank $0$}.  We let $F = \cI_{Z\subset kL}$ and $A = \OO_{\P^2}(-i)$.  It is enough to see that $-i > s_0$, where $s_0$ is the center of all the potential walls for $F$.  By \S \ref{ss-rankZeroInv}, this amounts to the inequality $$i<\frac{n}{k}+\frac{k}{2}.$$ We have $i = c(D_Z)$, so horizontal purity of $Z$ gives $$i-1=\mu_1(\cI_Z) \leq \mu_k(\cI_Z) = \frac{n}{k}+\frac{k-3}{2},$$ and the inequality is immediate.

\emph{Case 3: $B$ trivial, $F$ has rank $0$}.  Again say $F = \cI_{Z \subset kL}$, and write $B = \OO_{\P^2}(-k-i)[1]$.  The block diagram $D_Z$ must then have $\ell'=i$ full columns of $k$ boxes.  We must see that $-k-i \leq s_0$, or $$ i \geq \frac{n}{k}-\frac{k}{2}.$$ Considering that $w_{k-1}=i$, this follows immediately from the purity hypothesis $\mu_{k}(\cI_Z)\geq \mu_{k-1}(\cI_Z)$.

\emph{Case 4: $B$ trivial, $F$ has rank $-1$}.  Write $$F = (\OO_{\P^2}(-k) \oplus \OO_{\P^2}(-i) \to \cI_Z) \qquad \textrm{and} \qquad B = \OO_{\P^2}(-j-i)[1], $$ so that $F$ is destabilized by the $j$th horizontal line and there are $\ell = j$ full rows of $i$ boxes in $D_Z$.  Walls for $F$ are nested semicircles to the right of the vertical wall $s = -k-i$, and they all have centers lying to the right of $$-k-i+\sqrt{2\Delta(F)} = -k-i+\sqrt{2(ki-n)}.$$ It is then enough to show $$-j-i \leq -k-i + \sqrt{2(ki-n)},$$  or $(k-j)^2 < 2(ki-n).$  This follows from  the inequality $\mu_j(F) \leq \mu_{i-1}'(F)$.

In every case, $A,$ $F,$ and $B$ are all objects in $\cA_s$ along the wall, and the triangle is an exact sequence.

\subsection{Nesting for rank $1$ objects} Consider the conjectured destabilizing sequence of a rank $1$ object $$0 \to \cI_{W_k}(-k)\to \cI_Z \to \cI_{Z_k\subset kL}\to 0.$$ By induction, we may assume both the subobject and the quotient object have the conjectured destabilizing sequences.

\begin{lemma}\label{rankOneNest1}
If $\cI_{W_k}(-k)$ is nontrivial, it is destabilized along a wall nested in $W(\cI_{W_k}(-k),\cI_Z)$, and $$\mu_\opt(\cI_{W_k})+k \leq \mu_\opt(\cI_Z).$$
\end{lemma}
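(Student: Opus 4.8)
The plan is to reduce the whole lemma to the single numerical inequality $\mu_\opt(\cI_{W_k})+k\le \mu_\opt(\cI_Z)$. Recall that in this subsection $k$ is the index achieving the largest wall, so (swapping $x,y$ if needed) $\mu_k(\cI_Z)=\mu_\opt(\cI_Z)$. First I would record that $W_k$ is a nonempty monomial scheme of degree $w_k<n$: it is nonempty because $\cI_{W_k}(-k)$ is assumed nontrivial, and then $k<r(D)$, so $w_k<n$. Hence the inductive hypothesis of this section applies to $\cI_{W_k}$: it is destabilized along its largest potential wall, which is nonempty and has orthogonal slope $\mu_\opt(\cI_{W_k})=\max_m\{\mu_m(\cI_{W_k}),\mu_m'(\cI_{W_k})\}$ by \S\ref{rank1invs}. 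Twisting by $\OO_{\P^2}(-k)$ is an autoequivalence which replaces $\cZ_{s,t}(-)$ by $\cZ_{s+k,t}(-(-k))$, hence translates every potential wall for $\cI_{W_k}$ to the left by $k$; so $\cI_{W_k}(-k)$ is destabilized along the nonempty wall with center $-\mu_\opt(\cI_{W_k})-k-\tfrac32$.

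Next I would invoke the geometry of potential walls for a positive-rank object with nonnegative discriminant (\S\ref{sec-prelim-Bridge}, \cite{ABCH}): the potential walls for $\cI_{W_k}(-k)$ lying left of its vertical wall $s=-k$ are disjoint nested semicircles, larger walls having more negative centers. Both $W(\cI_{W_k}(-k),\cI_Z)$ — whose center is $s_k=-\mu_k(\cI_Z)-\tfrac32=-\mu_\opt(\cI_Z)-\tfrac32$ — and the destabilizing wall of $\cI_{W_k}(-k)$ — with center $-\mu_\opt(\cI_{W_k})-k-\tfrac32$ — are among these. So the destabilizing wall is nested inside $W(\cI_{W_k}(-k),\cI_Z)$ precisely when its center is no more negative, which via $\mu_\opt=-s-\tfrac32$ is exactly the inequality $\mu_\opt(\cI_{W_k})+k\le\mu_\opt(\cI_Z)$. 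Thus, granting induction, the nesting assertion and the displayed inequality are equivalent, and it remains to prove the inequality.

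For that I would treat horizontal and vertical slopes of $W_k$ separately. For a horizontal slope $\mu_m(\cI_{W_k})$ with $1\le m\le r(D)-k$, splitting the defining sum of $\mu_{k+m}(\cI_Z)$ over the first $k$ rows and the next $m$ rows of $D_Z$ — using that the $j$-th row of $D_{W_k}$ is the $(k+j)$-th row of $D_Z$ — gives after a short computation the weighted-average identity
\[
\mu_{k+m}(\cI_Z)=\frac{k\,\mu_k(\cI_Z)+m\bigl(\mu_m(\cI_{W_k})+k\bigr)}{k+m}.
\]
Since $k,m>0$ and $\mu_{k+m}(\cI_Z)\le\mu_\opt(\cI_Z)=\mu_k(\cI_Z)$ by definition of $\mu_\opt$, this convex combination being bounded above by $\mu_k(\cI_Z)$ forces $\mu_m(\cI_{W_k})+k\le\mu_k(\cI_Z)=\mu_\opt(\cI_Z)$. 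For a vertical slope $\mu_m'(\cI_{W_k})$ with $1\le m\le c(D_{W_k})$, I would observe that for $i\le c(D_{W_k})$ the bottom $k$ rows of $D_Z$ each have at least $i$ boxes, so the $i$-th column of $D_{W_k}$ has exactly $v_i-k$ boxes, where $v_i\ (\ge k+1)$ is the length of the $i$-th column of $D_Z$. Substituting into the definition of the vertical slope gives $\mu_m'(\cI_{W_k})=\mu_m'(\cI_Z)-k\le\mu_\opt(\cI_Z)-k$. Taking the maximum over all $m$ yields $\mu_\opt(\cI_{W_k})+k\le\mu_\opt(\cI_Z)$.

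I expect the main (and essentially only) obstacle to be bookkeeping: transcribing the row- and column-length combinatorics of the subdiagram $D_{W_k}$ correctly, checking that the index ranges ($m\le r(D)-k$ horizontally, $m\le c(D_{W_k})$ vertically) keep $\mu_{k+m}(\cI_Z)$ and $\mu_m'(\cI_Z)$ in range, and confirming that the weights $k$ and $m$ in the averaging identity are genuinely positive so the inequality points the right way. Everything else is a direct application of the twist-invariance of the wall geometry and the inductive hypothesis.
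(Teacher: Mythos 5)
Your proposal is correct and follows essentially the same route as the paper: reduce the nesting claim to the displayed inequality by identifying $\mu_\opt$ with (minus) the wall centers and using that twisting by $\OO_{\P^2}(-k)$ shifts walls left by $k$, then verify the inequality via the weighted-average identity $(k+m)\mu_{k+m}(\cI_Z)=k\mu_k(\cI_Z)+m\mu_m(\cI_{W_k})+km$ for horizontal slopes and the column-truncation relation for vertical slopes. The only difference worth noting is that your vertical relation $\mu_m'(\cI_{W_k})=\mu_m'(\cI_Z)-k$ is the sign-corrected form of the paper's displayed $\mu_i'(\cI_{W_k})=k+\mu_i'(\cI_Z)$, and yours is the version that actually yields the required inequality $\mu_\opt(\cI_{W_k})+k\leq\mu_\opt(\cI_Z)$.
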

\begin{proof}
The destabilizing wall for $\cI_{W_k}(-k)$ is centered $k$ units to the left of the destabilizing wall for $\cI_{W_k}$.  Since the slopes $\mu_\opt(\cI_{W_k})$ and $\mu_\opt(\cI_Z)$ correspond to the centers of the destabilizing walls for $\cI_{W_k}$ and $\cI_Z$, the nesting statement is equivalent to the displayed inequality.
 
 By assumption, $\mu_\opt(\cI_Z) = \mu_k(\cI_Z)$.  There are two cases to consider, corresponding to whether $\mu_\opt(\cI_{W_k}) = \mu_i(\cI_{W_k})$ or $\mu_\opt(\cI_{W_k}) = \mu'_i(\cI_{W_k})$.

First, suppose $\mu_\opt(\cI_{W_k})=\mu_i(\cI_{W_k})$.  By the block diagram interpretation of horizontal slopes (see Figure \ref{figure-relation1}), we find $$(i+k) \mu_{i+k}(\cI_Z)= k\mu_k(\cI_Z)+i\mu_i(\cI_{W_k})+ik.$$ The choice of $k$ gives $\mu_{i+k}(\cI_Z) \leq \mu_k(\cI_Z),$ from which we conclude $$\mu_\opt(\cI_{W_k})+ k = \mu_i(\cI_{W_k})+k \leq \mu_k(\cI_Z) = \mu_\opt(\cI_Z). $$

 \begin{figure}[htbp]
\begin{center}
\input{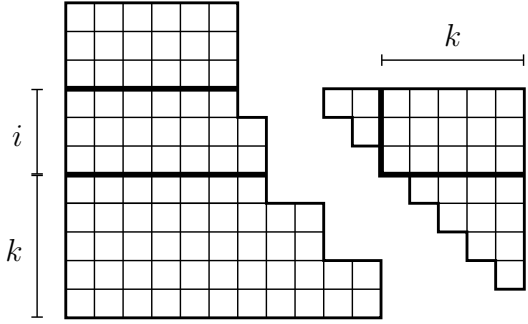}
\end{center}
\caption{The block diagram interpretation of the equation $(i+k) \mu_{i+k}(\cI_Z)= k\mu_k(\cI_Z)+i\mu_i(\cI_{W_k})+ik.$}
\label{figure-relation1}
\end{figure}

On the other hand, suppose $\mu_\opt(\cI_{W_k}) = \mu_i'(\cI_{W_k})$.  Again by the block diagram interpretation of vertical slopes (see Figure \ref{figure-relation2}), we have $$\mu_\opt(\cI_{W_k}) = \mu_i'(\cI_{W_k}) = k + \mu_i'(\cI_Z) \leq  k + \mu_\opt(\cI_Z),$$ as required.

 \begin{figure}[htbp]
\begin{center}
\input{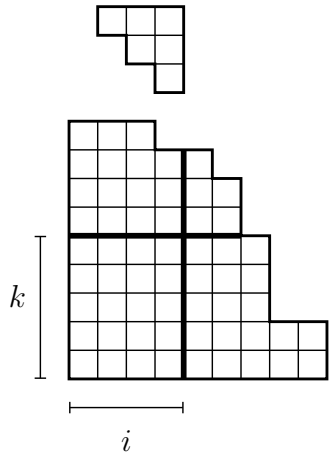}
\end{center}
\caption{The block diagram interpretation of the equation $\mu_i'(\cI_{W_k}) = k + \mu_i'(\cI_Z).$}
\label{figure-relation2}
\end{figure}

\end{proof}

\begin{lemma}\label{rankOneNest2}
The rank zero object $\cI_{Z_k\subset kL}$ is destabilized along a wall nested in $W(\cI_Z,\cI_{Z_k\subset kL}),$ and $$\Delta_\opt(\cI_{{Z_k}\subset kL}) \leq \Delta_\opt(\cI_Z).$$ (Note that both objects have the same $\mu_\opt$.)
\end{lemma}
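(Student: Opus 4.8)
The plan is to exploit the exactness of the conjectured sequence $0 \to \cI_{W_k}(-k)\to \cI_Z\to \cI_{Z_k\subset kL}\to 0$. Because it is exact, the central charge is additive along it, so the potential wall $W(\cI_Z,\cI_{Z_k\subset kL})$ is the same semicircle as $W(\cI_{W_k}(-k),\cI_Z)$; by the formulas of \S\ref{rank1invs} this semicircle is centered at $(s_0,0)$ with $s_0=-\tfrac{n-w_k}{k}-\tfrac k2$, and by \S\ref{ss-rankZeroInv} this is exactly the common center of \emph{every} potential wall for the rank $0$ object $F=\cI_{Z_k\subset kL}$. In particular $\mu_\opt(\cI_Z)=\mu_\opt(F)=-s_0-\tfrac32=:\mu_\opt$, which proves the parenthetical claim, and since concentric walls are ordered by their radii, the destabilizing wall of $F$ is nested in $W(\cI_Z,\cI_{Z_k\subset kL})$ precisely when $\Delta_\opt(F)\le\Delta_\opt(\cI_Z)$. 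As $\Delta_\opt(F)=\max_i\Delta_i'(F)$ over $\ell'\le i\le c(D_{Z_k})$, it suffices to bound each $\Delta_i'(F)$ by $\Delta_\opt(\cI_Z)$.

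Next I would turn both discriminants into Riemann--Roch expressions via the orthogonality conditions defining the relevant classes, exactly as in Proposition \ref{orthogonalInvariantsProp}. A class of slope $\mu_\opt$ orthogonal to $\ch(\cI_Z)=(1,0,-n)$ has discriminant $P(\mu_\opt)-n$, and a class of slope $\mu_\opt$ orthogonal to $\ch(\cI_{W_i'}(-i))=(1,-i,\tfrac{i^2}{2}-w_i')$ (where $W_i'$ and $w_i'$ are now computed inside $Z_k$) has discriminant $P(\mu_\opt-i)-w_i'$. Since $P(m)=\tfrac12(m^2+3m+2)$ satisfies $P(\mu_\opt)-P(\mu_\opt-i)=i\bigl(\mu_\opt-\tfrac{i-3}{2}\bigr)$, this gives
\[
\Delta_\opt(\cI_Z)-\Delta_i'(F)=i\Bigl(\mu_\opt-\tfrac{i-3}{2}\Bigr)-\bigl(n-w_i'\bigr).
\]

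Finally I would identify $n-w_i'$ combinatorially: $w_i'$ counts the boxes of $D_{Z_k}$ strictly right of column $i$, so $n-w_i'$ is the number of boxes of $D_Z$ in columns $1,\dots,i$ \emph{as long as the entire sub-diagram $D_{W_k}$ sits inside those columns}. This is where the constraint $i\ge \ell'$ is used: if $h_j$ denotes the length of the $j$-th row of $D_Z$, then $\ell'=h_k\ge h_{k+1}=c(D_{W_k})$, so indeed $i\ge c(D_{W_k})$ and $D_{W_k}$ contributes all of its $w_k$ boxes. Counting column by column then yields $n-w_i'=i\,\mu_i'(\cI_Z)-\tfrac{i(i-3)}{2}$, and substituting collapses the displayed difference to $i\bigl(\mu_\opt-\mu_i'(\cI_Z)\bigr)\ge 0$, the inequality being the defining maximality of $\mu_\opt=\mu_\opt(\cI_Z)$ among all slopes of $\cI_Z$. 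The one step that needs genuine care is this box-counting identity, i.e.\ checking that $i\ge\ell'$ forces the first $i$ columns of $D_Z$ to absorb all of $W_k$ so that the vertical slope $\mu_i'(\cI_Z)$ appears cleanly; the rest is bookkeeping with Riemann--Roch and the quadratic $P$.
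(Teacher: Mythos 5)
Your proposal is correct and follows essentially the same route as the paper's proof: the same reduction of nesting to $\Delta_\opt(F)\leq\Delta_\opt(\cI_Z)$ via concentricity, the same key observation that $i\geq\ell'$ forces $W_i'$ (and hence $w_i'$) to agree whether sliced from $Z$ or from $Z_k$, and the same algebra collapsing the difference of discriminants to $i\bigl(\mu_k(\cI_Z)-\mu_i'(\cI_Z)\bigr)\geq 0$. The only cosmetic difference is that you bound every $\Delta_i'(F)$ directly rather than invoking induction to single out the actual destabilizing index, which is equivalent since $\Delta_\opt(F)$ is defined as the maximum over that range.
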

\begin{proof}
Write $F = \cI_{Z_k\subset kL}$.  In the block diagram $D_{Z_k}$ of $Z_k$, there are some number $\ell'>0$ of columns of full height $k$.  By induction, $F$ is destabilized by some twisted rank one object $\cI_{W_i'}(-i)$ with $\ell' \leq i \leq c(D(Z_k)).$  In particular, since $i\geq \ell'$, notice that  $W_i'$ can be obtained as a vertical slice from either $Z$ or from $Z_k$.

Since $\Delta_\opt(F)$ and $\Delta_\opt(\cI_Z)$ correspond to the radii of the corresponding destabilizing walls, the nesting statement amounts to the inequality $\Delta_\opt(F) \leq \Delta_\opt(\cI_Z)$.
To show the inequality, note that by the choice of $k$, we have $$\frac{n-w_i'}{i} + \frac{i-3}{2} =\mu_i'(\cI_Z) \leq \mu_k(\cI_Z) = \mu_k(\cI_{Z_k}) = \frac{n-w_k}{k}+\frac{k-3}{2}.$$ Interpret this inequality as a lower bound for $w_i'$:  
$$w_{i}' \geq \frac{1}{2} i(i-k) +n - \frac{i}{k}(n-w_k).$$
Recalling our computations from Section \ref{sec-invariants},
$$\Delta_\opt(F) = \Delta_i(F) = P(\mu_k(\cI_Z)-i)-w_i'$$ and $$\Delta_\opt(\cI_Z) = P(\mu_k(\cI_Z))-n.$$ Then $$\Delta_\opt(\cI_Z)-\Delta_\opt(F) = -\frac 12 i(i-k)-n+\frac{i}{k}(n-w_k)+w_i'.$$ Substituting the lower bound for $w_i'$ gives $\Delta_\opt(F)-\Delta_\opt(\cI_Z) \geq 0.$
\end{proof}

\subsection{Nesting for rank $0$ objects} \label{ss-rank0nesting}
Here we consider a rank $0$ object $F = \cI_{Z\subset kL}$ and its destabilizing sequence $$0\to \cI_{W_i'}(-i) \to F \to G^\bullet \to 0,$$ where $G^\bullet$ is the rank $-1$ object $$G^\bullet:= \OO_{\P^2}(-k)\oplus \OO_{\P^2}(-i)\to \cI_{Z_i'}.$$  To show that the walls nest appropriately, the purity hypothesis on $Z$ is essential.

\begin{lemma}\label{rankZeroNest1}
If $\cI_{W_i'}(-i)$ is nontrivial, it is destabilized along a wall nested in $W(\cI_{W_i'}(-i),F)$, and $$\mu_\opt(\cI_{W_i'})+i\leq \mu_{\opt}(F).$$
\end{lemma}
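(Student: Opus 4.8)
The strategy is to follow the proof of Lemma~\ref{rankOneNest1} closely. \emph{Reduction to the inequality.} Tensoring by $\OO_{\P^2}(-i)$ translates the $(s,t)$-plane $i$ units to the left, so the destabilizing wall of $\cI_{W_i'}(-i)$ is the destabilizing wall of $\cI_{W_i'}$ shifted left by $i$. By induction on degree the latter wall is nonempty, and by Proposition~\ref{orthogonalInvariantsProp} the former has center $\bigl(-\mu_\opt(\cI_{W_i'})-i-\tfrac32,\,0\bigr)$; since $\cI_{W_i'}(-i)$ has positive rank, this wall lies to the left of its vertical wall. The wall $W(\cI_{W_i'}(-i),F)$ is a potential wall for $\cI_{W_i'}(-i)$ with center $s_0=-\mu_\opt(F)-\tfrac32$, and the estimate $i\le\mu_k(\cI_Z)+1$ coming from the horizontal purity of $Z$ (cf.\ Case~2 of \S\ref{ss-exactness}) shows $s_0<-i$, so this wall too lies to the left of the vertical wall of $\cI_{W_i'}(-i)$. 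Both walls thus belong to the nested family of potential walls of $\cI_{W_i'}(-i)$, so ``nested inside $W(\cI_{W_i'}(-i),F)$'' is equivalent to the comparison of centers $-\mu_\opt(\cI_{W_i'})-i\ge s_0$, i.e.\ to $\mu_\opt(\cI_{W_i'})+i\le\mu_\opt(F)$. Recalling $\mu_\opt(F)=\mu_k(\cI_Z)$, it remains to prove $\mu_\opt(\cI_{W_i'})+i\le\mu_k(\cI_Z)$.

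Write $\mu_\opt(\cI_{W_i'})$ as either a horizontal slope $\mu_m(\cI_{W_i'})$ or a vertical slope $\mu_m'(\cI_{W_i'})$ of the rank~$1$ object $\cI_{W_i'}$. In the horizontal case $1\le m\le r(D_{W_i'})\le r(D_Z)=k$, and the bottom $m$ rows of $D_{W_i'}$ are the bottom $m$ rows of $D_Z$ with the first $i$ boxes deleted from each; counting checkers gives $\mu_m(\cI_{W_i'})=\mu_m(\cI_Z)-i$, and horizontal purity of $Z$ gives $\mu_\opt(\cI_{W_i'})+i=\mu_m(\cI_Z)\le\mu_k(\cI_Z)$, as needed. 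This case is immediate and uses only purity.

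The vertical case is the crux, and is where the choice of $i$ enters. Suppose $\mu_\opt(\cI_{W_i'})=\mu_m'(\cI_{W_i'})$ with $1\le m\le c(D_{W_i'})=c(D_Z)-i$, so $j:=i+m$ lies in the admissible range $\ell'\le j\le c(D_Z)$. Since the columns of $D_{W_i'}$ are columns $i+1,\dots$ of $D_Z$ with the checker shift reduced by $i$ in each column, a direct count yields the identity $m\,\mu_m'(\cI_{W_i'})=(i+m)\mu_{i+m}'(\cI_Z)-i\,\mu_i'(\cI_Z)-mi$, so the goal becomes
\[
(i+m)\,\mu_{i+m}'(\cI_Z)\ \le\ i\,\mu_i'(\cI_Z)+m\,\mu_k(\cI_Z).
\]
To establish this I would invoke the maximality of the wall $W(\cI_{W_i'}(-i),F)$ among the constructed walls for $F$: since all these walls are concentric, maximality gives $\Delta_i'(F)\ge\Delta_{i+m}'(F)$, and Riemann--Roch applied to $\chi\bigl((\zeta_j')^\ast,\cI_{W_j'}(-j)\bigr)=0$ evaluates $\Delta_j'(F)=P\bigl(\mu_k(\cI_Z)-j\bigr)-w_j'$ (the same identity used in Lemma~\ref{rankOneNest2}). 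Rewriting $w_i'-w_{i+m}'=\sum_{l=i+1}^{i+m}v_l$ via $\sum_{l=1}^{j}v_l=j\,\mu_j'(\cI_Z)+\tfrac{j(3-j)}{2}$ and expanding $P(x)=\tfrac12(x^2+3x+2)$, the terms quadratic in $i$ and $m$ cancel and one is left with exactly the displayed inequality, which finishes the vertical case. I expect that cancellation to be the only delicate computation: it is the precise reason that, unlike the horizontal case, the vertical case genuinely requires the maximality of $i$ and not merely the purity of $Z$. Combining the reduction with the two cases completes the proof.
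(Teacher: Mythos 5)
Your proof is correct and takes essentially the same route as the paper's: the horizontal case via the block-diagram identity $\mu_m(\cI_{W_i'})=\mu_m(\cI_Z)-i$ together with horizontal purity, and the vertical case via the maximality inequality $\Delta_i'(F)\geq\Delta_{i+m}'(F)$ combined with the Riemann--Roch evaluation $\Delta_j'(F)=P(\mu_k(\cI_Z)-j)-w_j'$. The paper is merely terser, writing the resulting bound directly as a lower estimate on $w_{i+j}'$ rather than passing through the weighted-average identity for vertical slopes, but the computations agree.
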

\begin{proof}
As in previous cases, the inequality precisely encodes the nesting of the walls.

First, suppose $W_i'$ is destabilized horizontally, with $\mu_j(\cI_{W_i'}) = \mu_\opt(\cI_{W_i'})$.  From our explicit calculations in Section \ref{sec-invariants}, we have $\mu_\opt(F) = \mu_k(\cI_Z)$, and since $Z$ is horizontally pure $\mu_k(\cI_Z)\geq \mu_j(\cI_Z)$.  By the block diagram interpretation of horizontal slopes, $$\mu_\opt(\cI_{W_i'})+i=\mu_j(\cI_{W_i'})+i =\mu_j(\cI_Z)\leq \mu_k(\cI_Z)=\mu_\opt(F).$$

On the other hand, suppose $W_i'$ is destabilized vertically, with $\mu_j'(\cI_{W_i'})= \mu_\opt(\cI_{W_i'}).$ The required inequality is $$\frac{w_i'-w_{i+j}'}{j}+\frac{j-3}{2} +i = \mu'_j(\cI_{W_i'})+i\leq \mu_\opt(F) = \frac{n}{k}+\frac{k-3}{2}$$
Here the choice of $i$ gives an inequality $\Delta_i(F)\geq \Delta_{i+j}(F)$. Written out, this is equivalent to $$w_{i+j}'\geq \frac{1}{2}j(2i+j-k)-\frac{j}{k}n+w_i'.$$ Making this substitution proves the required inequality.
\end{proof}

\begin{lemma}\label{rankZeroNest2}
If the rank $-1$ object $G^\bullet$ is nontrivial, it is destabilized along a wall nested in $W(F,G^\bullet)$, and $$\mu_\opt(G^\bullet)\geq \mu_\opt(F).$$
\end{lemma}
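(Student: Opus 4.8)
The plan is to follow the proof of Lemma \ref{rankZeroNest1}: the displayed inequality precisely encodes the nesting of the walls, so the content is to show $\mu_\opt(G^\bullet)\ge\mu_\opt(F)$. By \S\ref{ss-rankZeroInv}, $\mu_\opt(F) = -s_0-\tfrac32 = \tfrac nk+\tfrac{k-3}{2} = \mu_k(\cI_Z)$, where $n=\deg Z$, and all potential walls of the rank $0$ object $F$ are concentric about $(s_0,0)$, so $W(F,G^\bullet)$ is centered at $s_0$. Since $\deg Z_i'\le n$, induction (in the form of the rank $-1$ case applied to $G^\bullet$) shows $G^\bullet$ is destabilized along its largest potential wall, centered at $\big(-\mu_\opt(G^\bullet)-\tfrac32,\,0\big)$; as the potential walls of the rank $-1$ object $G^\bullet$ form a nested family to the right of the vertical wall $s=-k-i$ that grows with their centers, this wall is nested inside $W(F,G^\bullet)$ precisely when $-\mu_\opt(G^\bullet)-\tfrac32\le s_0$, i.e. when $\mu_\opt(G^\bullet)\ge\mu_\opt(F)$. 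Thus it suffices to bound every potential destabilizing slope of $G^\bullet$ below by $\mu_k(\cI_Z)$.

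Write $G^\bullet=(\OO_{\P^2}(-k)\oplus\OO_{\P^2}(-i)\to\cI_{Z_i'})$, where $Z_i'$ consists of the left $i$ columns of $Z$, so $r(D_{Z_i'})=k$ and $c(D_{Z_i'})=i$; since $G^\bullet$ is nontrivial, $Z_i'$ is not a $k\times i$ rectangle, so $\ell(Z_i')<k$ and $\ell'(Z_i')<i$ and the index ranges below are nonempty. The potential destabilizing sequences of $G^\bullet$ are indexed by horizontal slices $j$ with $\ell(Z_i')\le j<k$ and vertical slices $m$ with $\ell'(Z_i')\le m<i$, with slopes $\mu_j(G^\bullet)=\tfrac12(j+k)+\tfrac{w_j(Z_i')}{k-j}-\tfrac32$ and $\mu_m'(G^\bullet)=\tfrac12(m+i)+\tfrac{w_m'(Z_i')}{i-m}-\tfrac32$ coming from the rank $-1$ formulas of \S\ref{sec-invariants} (here $w_j(Z_i')$, resp. $w_m'(Z_i')$, denotes the degree of the part of $D_{Z_i'}$ above its $j$th row, resp. right of its $m$th column). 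For a horizontal slice, the key point is that $w_j(Z_i')=w_j(Z)$ whenever $j\ge\ell(Z_i')$: the number $\ell(Z_i')$ counts the rows of $Z$ of length $\ge i$, so every row of $Z$ strictly above the $j$th row has length $<i$ and is not truncated in passing to $Z_i'$. Then horizontal purity of $Z$, i.e. $\mu_j(\cI_Z)\le\mu_k(\cI_Z)$, rearranges to $w_j(Z)\ge(k-j)\big(\tfrac nk-\tfrac j2\big)$, and substituting this into the formula for $\mu_j(G^\bullet)$ gives $\mu_j(G^\bullet)\ge\mu_k(\cI_Z)$.

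For a vertical slice, I instead use the maximality of $i$ in the definition of $\mu_\opt(F)$. Since $\ell'(Z_i')=\ell'(Z)$ (columns are not vertically truncated), every such $m$ is a valid index for the potential walls of $F$, so $\Delta_i'(F)\ge\Delta_m'(F)$; by the radius formula of \S\ref{ss-rankZeroInv} this is equivalent to $w_m'(Z_i')=w_m'(Z)-w_i'(Z)\ge\tfrac12(i-m)\big(k-m-i+\tfrac{2n}{k}\big)$, and substituting into the formula for $\mu_m'(G^\bullet)$ again yields exactly $\mu_m'(G^\bullet)\ge\mu_k(\cI_Z)$. Taking the minimum over all slices gives $\mu_\opt(G^\bullet)\ge\mu_\opt(F)$, which is the claim. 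I expect the main obstacle to be purely bookkeeping: pinning down the index ranges so that each slice is a legitimate index for the relevant parent object, and tracking the degrees $w_j$ and $w_m'$ across the schemes $Z$ and $Z_i'$ — in particular the identity $w_j(Z_i')=w_j(Z)$ for $j\ge\ell(Z_i')$, which is exactly what makes the horizontal case reduce cleanly to purity of $Z$ rather than requiring a separate estimate.
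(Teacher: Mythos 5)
Your proof is correct and follows essentially the same route as the paper's: reduce nesting to the slope inequality, then treat a horizontal destabilizer of $G^\bullet$ via horizontal purity of $Z$ (the bound $w_j \geq (k-j)(\tfrac{n}{k}-\tfrac{j}{2})$) and a vertical one via the maximality $\Delta_i'(F)\geq \Delta_m'(F)$ of the wall index $i$. Your extra bookkeeping, in particular the identity $w_j(Z_i')=w_j(Z)$ for $j\geq \ell(Z_i')$ and the observation $\ell'(Z_i')=\ell'(Z)$, is correct and merely makes explicit what the paper's notation suppresses.
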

\begin{proof}
Suppose $G^\bullet$ is destabilized by a rank zero object $A$.  The walls $W(G^\bullet,A)$ and $W(F,G^\bullet)$ are both walls for $G^\bullet$, which is a stable object to the right of the vertical wall $s = \mu(G^\bullet)$.  The correct nesting amounts to showing the center of $W(F,G^\bullet)$ lies to the right of the center of $W(G^\bullet,A)$, and this is equivalent to the displayed inequality.

As usual, there are two cases to consider based on whether $G^\bullet$ is destabilized horizontally or vertically.  First, suppose $G^\bullet$ is destabilized horizontally, with $\mu_j(G^\bullet)=\mu_\opt(G^\bullet)$, where $\ell\leq j <k$.  We know $\mu_\opt(F) = \mu_k(\cI_Z)$, and by purity of $Z$, $\mu_j(\cI_Z)\leq \mu_k(\cI_Z)$, so $$\frac{n-w_j}{j}+\frac{j-3}{2} \leq \frac{n}{k}+\frac{k-3}{2}.$$ Viewing this as a lower bound on $w_j$ then shows that $$\mu_j(G^\bullet)= \frac{1}{2}(j+k)+\frac{w_j}{k-j}-\frac{3}{2}\geq \frac{n}{k}+\frac{k-3}{2}= \mu_\opt(F).$$

If instead $G^\bullet$ is destabilized vertically with $\mu'_j(G^\bullet)=\mu_\opt(G^\bullet)$, we have an inequality $\Delta_j(F)\leq \Delta_i(F)$, where $\ell' \leq j  < i$.  As in Lemma \ref{rankZeroNest1}, we get the inequality $$w_j' \geq \frac{1}{2}(j-i)(i+j-k)+\frac{n}{k}(i-j)+w_i',$$ which implies $$\mu'_j(G^\bullet) = \frac{1}{2}(j+i)+\frac{w_j'-w_i'}{i-j}-\frac{3}{2} \geq \frac{n}{k}+\frac{k-3}{2} = \mu_\opt(F)$$ as required.
\end{proof}

\subsection{Nesting for rank $-1$ objects} Finally, we consider a nontrivial rank $-1$ object $$F^\bullet := \OO_{\P^2}(-k)\oplus \OO_{\P^2}(-i) \to \cI_Z$$ such that $r(D(Z)) = k$ and $c(D(Z))=i$.  Without loss of generality, the conjectured destabilizing sequence is $$ 0 \to E(-j) \to F^\bullet \to G^\bullet \to 0,$$ where $E$ is the rank $0$ object $\cI_{W_j\subset (k-j)L}$ and $G^\bullet$ is the rank $-1$ object $$G^\bullet := \OO_{\P^2}(-j)\oplus \OO_{\P^2}(-i) \to \cI_{Z_j}.$$ Here $\ell \leq j < r(D(Z))$, where $\ell$ is the number of full rows in $D(Z)$.

\begin{lemma}\label{rankMinusOneNest1}
The twisted rank $0$ object $E(-j)$ is destabilized along a wall nested in $W(E(-j),F^\bullet)$, and $$\Delta_\opt(E)\leq \Delta_\opt(F^\bullet).$$ (We have $\mu_\opt(E)+j=\mu_\opt(F^\bullet)$.)
\end{lemma}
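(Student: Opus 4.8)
The plan is to follow the template of Lemma~\ref{rankOneNest2}. The first step is to reduce the nesting assertion to the stated discriminant inequality. Since $W_j$ is horizontally pure by Proposition~\ref{prop-revisit}, $E=\cI_{W_j\subset (k-j)L}$ is a genuine rank $0$ monomial object, so all of its potential Bridgeland walls — and hence all potential walls of the twist $E(-j)$ — are concentric semicircles with a common center. The wall $W(E(-j),F^\bullet)$ is one of them, and since the scheme $W_j$ has degree $w_j<n$, the inductive hypothesis applies to $E$ and tells us that $E$ (equivalently, after translation by $j$, the object $E(-j)$) is destabilized along the potential wall of largest radius. Using the dictionary $2\Delta=\tfrac12(\rho^2-\tfrac14)$ of Proposition~\ref{orthogonalInvariantsProp} — together with the observation that the class $\zeta_j$ orthogonal to the terms of the destabilizing sequence of $F^\bullet$ is in particular orthogonal to $E(-j)$, so that $W(E(-j),F^\bullet)$ is the wall ``of discriminant $\Delta_\opt(F^\bullet)$'' — the nesting statement is equivalent to $\Delta_\opt(E)\le\Delta_\opt(F^\bullet)$ (discriminants being twist invariant, $\Delta_\opt(E(-j))=\Delta_\opt(E)$). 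The parenthetical identity $\mu_\opt(E)+j=\mu_\opt(F^\bullet)$ is then just the assertion that this common center is $(-\mu_\opt(F^\bullet)-\tfrac32,0)$, which is a one-line computation with the Chern characters recorded in \S\ref{sec-monomialObjects}.

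Next I would set up the inequality. Write $\nu=\mu_\opt(E)=\tfrac{w_j}{k-j}+\tfrac{k-j-3}{2}$. By induction $E$ is destabilized by a sequence of index $m$ with $\ell'(W_j)\le m\le c(D_{W_j})$, and $\Delta_\opt(E)=\max_m\Delta'_m(E)$, so it suffices to prove $\Delta'_m(E)\le\Delta_\opt(F^\bullet)$ for each admissible $m$. Applying Riemann--Roch to the orthogonal classes, as in \S\ref{sec-invariants}, gives $\Delta'_m(E)=P(\nu-m)-\deg (W_j)'_m$, and applying the same computation to the quotient term $\OO_{\P^2}(-j)\oplus\OO_{\P^2}(-i)\to\cI_{Z_j}$ of the destabilizing sequence of $F^\bullet$ gives $\Delta_\opt(F^\bullet)=P(\nu-i)-ji+n-w_j$. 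Clearing the $P$'s, the desired inequality becomes a lower bound on $u_m:=\deg (W_j)'_m$ of the shape $u_m\ge \tfrac12(i-m)\bigl(\tfrac{2w_j}{k-j}+k-j-m-i\bigr)+ji-n+w_j$.

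The two remaining ingredients are a maximality input and a combinatorial identity. The crucial point is that $m$ is an admissible \emph{vertical} destabilizing index for $F^\bullet$ itself: one checks $\ell'(W_j)=\ell'(Z)=\ell'$ (a full-height column of $W_j$ lifts to a full-height column of $Z$) and $m\le c(D_{W_j})<i$ (for $j\ge\ell$ the bottom row of $W_j$ is not full). Hence maximality of the destabilizing wall of $F^\bullet$ supplies the inequality $\mu_j(F^\bullet)\le\mu'_m(F^\bullet)$; rewriting it with the center formulas of \S\ref{sec-invariants} yields $w'_m\ge (i-m)\bigl(\tfrac{w_j}{k-j}+\tfrac{j+k-m-i}{2}\bigr)$, which is exactly what is needed to replace the right-hand side of the target inequality by $w'_m+jm-n+w_j$. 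Finally, inclusion--exclusion on $D_Z$ along the $m$th vertical and $j$th horizontal lines gives $u_m=w'_m+w_j-n+C$, where $C$ is the degree of the lower-left $[0,m)\times[0,j)$ block of $D_Z$; and $C\ge jm$ because each of the bottom $j$ rows of $D_Z$ has length at least that of row $j$, which is in turn $\ge m$. Combining, $u_m\ge w'_m+w_j-n+jm$, which gives the required bound, hence $\Delta'_m(E)\le\Delta_\opt(F^\bullet)$ for every $m$, and therefore $\Delta_\opt(E)\le\Delta_\opt(F^\bullet)$.

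I expect the main obstacle to be the block-diagram bookkeeping: recognizing $(W_j)'_m$ as an upper-right corner of $Z$, verifying $\ell'(W_j)=\ell'$ and $m<i$ so that the maximality of the wall of $F^\bullet$ delivers a \emph{vertical} inequality precisely at index $m$, and keeping track of which of the several (horizontal/vertical, sub/quotient) Riemann--Roch expressions to use for $\Delta_\opt(F^\bullet)$. Once these are fixed, the algebra is of the same routine type as in Lemmas~\ref{rankOneNest2}, \ref{rankZeroNest1}, and \ref{rankZeroNest2}.
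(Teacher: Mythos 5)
Your proof is correct and takes essentially the same route as the paper's: the paper's one-line argument likewise reduces nesting to the discriminant inequality and derives it from the maximality inequality $\mu_j(F^\bullet)\leq\mu'_m(F^\bullet)$, where $m$ is the vertical index destabilizing $E$; you have simply supplied the block-diagram bookkeeping ($\ell'(W_j)=\ell'$, $m<i$, and the inclusion--exclusion identity with $C\geq jm$) that the paper leaves implicit. (The only slip is the dictionary, which is $2\Delta=\rho^2-\tfrac14$ rather than $2\Delta=\tfrac12(\rho^2-\tfrac14)$; this does not affect the argument since larger radius still corresponds to larger discriminant.)
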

\begin{proof}
If $E$ is destabilized by the rank one object $A$, then $E(-j)$ is destabilized by $A(-j)$.  The correct nesting of the walls $W(E(-j),A(-j))$ and $W(E(-j),F^\bullet)$ is equivalent to the inequality.  If $A$ corresponds to the vertical slope $\mu'_m(E)$, then one uses the inequality $\mu_j(F^\bullet) \leq \mu'_m(F^\bullet)$ to conclude the required inequality on discriminants.
\end{proof}

\begin{lemma}\label{rankMinusOneNest2}
If the rank $-1$ object $G^\bullet$ is not trivial, then it is destabilized along a wall nested in $W(G^{\bullet},F^\bullet)$, and $$\mu_\opt(G^\bullet)\geq \mu_\opt(F^\bullet).$$
\end{lemma}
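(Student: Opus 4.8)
The plan is to follow the template of the three preceding nesting lemmas. First I would record, as there, that the displayed inequality is equivalent to the nesting statement: by the inductive hypothesis applied to $G^\bullet$, which is a nontrivial rank $-1$ monomial object attached to the monomial scheme $Z_j$ of strictly smaller degree $n-w_j$, the object $G^\bullet$ is destabilized along its largest potential wall $W(G^\bullet,A)$ for a suitable twisted rank $0$ subobject $A$; both $W(G^\bullet,A)$ and $W(G^\bullet,F^\bullet)=W(E(-j),F^\bullet)$ are potential walls for $G^\bullet$, which is Gieseker stable and lives to the right of its vertical wall $s=\mu(G^\bullet)=-(i+j)$, and for such an object larger walls have larger centers, hence (via Proposition~\ref{orthogonalInvariantsProp}) smaller associated slopes. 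So nesting of the destabilizing wall of $G^\bullet$ inside $W(G^\bullet,F^\bullet)$ is exactly $\mu_\opt(G^\bullet)\ge\mu_\opt(F^\bullet)=\mu_j(F^\bullet)$. It therefore suffices to bound below, by $\mu_j(F^\bullet)$, each of the potential wall slopes of $G^\bullet$: the slopes $\mu_m(G^\bullet)$ coming from horizontal lines with $\ell(Z_j)\le m<j$, and the slopes $\mu'_m(G^\bullet)$ coming from vertical lines with $\ell'(Z_j)\le m<i$.

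The vertical slopes are unchanged in passing from $F^\bullet$ to $G^\bullet$. The block diagram $D_{Z_j}$ is obtained from $D_Z$ by deleting everything above the $j$th horizontal line, which only shortens the columns of height exceeding $j$; and $\ell'(Z_j)$ is exactly the number of columns of $D_Z$ of height at least $j$. Since the columns of $D_Z$ are sorted by decreasing height and $m\ge \ell'(Z_j)$, every column of $D_Z$ lying to the right of the $m$th vertical line already has height less than $j$, so $D_Z$ and $D_{Z_j}$ agree to the right of that line; hence $\mu'_m(G^\bullet)=\mu'_m(F^\bullet)$. As every column of $D_Z$ of height $k$ has height exceeding $j$, we also get $\ell'(Z)\le\ell'(Z_j)\le m<i$, so $\mu'_m(F^\bullet)$ is one of the potential wall slopes of $F^\bullet$ and is therefore at least $\mu_\opt(F^\bullet)$; this settles the vertical case.

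For the horizontal slopes, the part of $D_{Z_j}$ above the $m$th horizontal line is the strip of $D_Z$ lying between the $m$th and $j$th horizontals, of degree $w_m-w_j$, so $\mu_m(G^\bullet)=\tfrac12(m+j)+\tfrac{w_m-w_j}{j-m}-\tfrac32$. Since $\ell(Z)=\ell(Z_j)\le m<j<k$, the slope $\mu_m(F^\bullet)$ is a legal potential wall slope of $F^\bullet$, and the choice of $j$ gives $\mu_j(F^\bullet)\le\mu_m(F^\bullet)$; I would rearrange this into a lower bound for $w_m$ and substitute it into the formula for $\mu_m(G^\bullet)$, obtaining $\mu_m(G^\bullet)\ge\mu_j(F^\bullet)$ after the same short manipulation used in the proof of Lemma~\ref{rankOneNest1}. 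Together the two cases give $\mu_\opt(G^\bullet)\ge\mu_\opt(F^\bullet)$.

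The one genuine obstacle is the bookkeeping in the vertical case: one must notice that the destabilization index $m$ of $G^\bullet$ is automatically at least $\ell'(Z_j)$, which in turn is at least the number of columns of $D_Z$ taller than $j$, and it is precisely this that makes $\mu'_m(G^\bullet)=\mu'_m(F^\bullet)$ and collapses that case; a direct comparison of $\mu'_m(G^\bullet)$ with $\mu'_m(F^\bullet)$ without first making this observation goes the wrong way, because deleting rows of $D_Z$ can only decrease the degree to the right of a vertical line. Everything else is the by-now-routine dictionary between block-diagram combinatorics and Bridgeland wall centers, together with the inductive hypothesis on $G^\bullet$.
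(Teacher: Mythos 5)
Your proof is correct and follows essentially the same route as the paper: reduce nesting to the slope inequality, split according to whether $G^\bullet$ is destabilized horizontally or vertically, and in each case compare against $\mu_j(F^\bullet)=\mu_\opt(F^\bullet)$ using the index bounds $m\geq \ell(Z_j)=\ell(Z)$ and $m\geq \ell'(Z_j)\geq \ell'(Z)$. In fact you make explicit a step the paper leaves implicit, namely the identity $\mu'_m(G^\bullet)=\mu'_m(F^\bullet)$ for $m\geq \ell'(Z_j)$ (and the analogous substitution computation in the horizontal case), which is exactly what is needed to close the vertical case.
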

\begin{proof}
Equivalence between nesting and the inequality is easy.  For the inequality, either $G^\bullet$ is destabilized horizontally or vertically by a rank $0$ object.  In case it is destabilized horizontally, say with $\mu_\opt(G^\bullet) = \mu_m(G^\bullet)$ (where $m\geq \ell(Z_j)=\ell(Z)$), the inequality follows from the inequality $\mu_m(F^\bullet)\geq \mu_j(F^\bullet)$.  On the other hand, if $G^\bullet$ is destabilized vertically with $\mu_\opt(G^\bullet) = \mu'_m(G^\bullet),$ we have $m\geq \ell'(Z_j)\geq \ell'(Z)$ and therefore an inequality $\mu'_m(F^\bullet)\geq \mu_j(F^\bullet)$, allowing us to conclude $\mu_\opt(G^\bullet)\geq \mu_\opt(F^\bullet)$.
\end{proof}

\section{Interpolation for monomial objects}\label{sec-interpolate}

We conclude the paper by solving the interpolation problem for a monomial object.  

\begin{theorem}\label{mainThm}
Let $F$ be a monomial object, and let $\zeta = (r,\mu,\Delta)$ be a Chern character such that $\chi(\zeta^*,F)=0$.  Suppose $r$ is sufficiently large and divisible, and let $E\in \cP(\zeta)$ be a general prioritary bundle.  Let $\mu_\opt(F),\Delta_\opt(F)$ be the combinatorial invariants defined in Section \ref{sec-invariants}.
\begin{enumerate}
\item If $F$ has rank $1$, then $E\te F$ is acyclic if and only if $\mu \geq \mu_\opt(F)$.  That is, $\mu_{\min}^\perp(F) = \mu_\opt(F)$.

\item If $F$ has rank $0$, then $E\te F$ is acyclic if and only if $\Delta \geq \Delta_\opt(F), $ i.e. $\Delta_{\min}^\perp(F) = \Delta_\opt(F)$.

\item If $F$ has rank $-1$, then $E\te F$ is acyclic if and only if $\mu \leq \mu_\opt(F).$
\end{enumerate}

\end{theorem}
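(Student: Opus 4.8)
The plan is to establish the three statements of Theorem \ref{mainThm} simultaneously, by induction on the pair $(n,r)$ consisting of the degree $n$ of the monomial scheme underlying a monomial object $F$ and its rank $r\in\{-1,0,1\}$, ordered so that smaller degree comes first and, within a fixed degree, $r=-1$ precedes $r=0$ precedes $r=1$. By the structure of the destabilizing sequences described in \S\ref{sec-monomialObjects} and verified in \S\ref{sec-nesting}, for a nontrivial monomial object $F$ the terms $A$ and $B$ in the destabilizing sequence $0\to A\to F\to B\to 0$ are (twists of) monomial objects strictly earlier in this order, so the induction is well-founded. The base cases are the trivial objects $\OO_{\PP^2}(d)$ and $\OO_{\PP^2}(d)[1]$, for which $E\te F$ is (a shift of) $E(d)$: if $r(\zeta)$ is large and divisible this is acyclic whenever $\chi(E(d))=0$ by Theorem \ref{prioritaryThm}(4), and the combinatorial invariants of a trivial object impose no constraint, since the maxima and minima defining $\mu_\opt$ and $\Delta_\opt$ run over empty index sets; so the claimed equivalences hold.

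For the existence half of each statement I would fix a nontrivial $F$ together with its destabilizing sequence $0\to A\to F\to B\to 0$ and take $\zeta=\zeta_\opt(F)$ to be the orthogonal class of \S\ref{sec-invariants}, scaled so that $r(\zeta)$ is large and divisible. By construction $\chi(\zeta^*,A)=\chi(\zeta^*,F)=0$, hence also $\chi(\zeta^*,B)=0$, so $\zeta$ is an admissible Chern character for both $A$ and $B$. The nesting inequalities of \S\ref{sec-nesting} (Lemmas \ref{rankOneNest1}--\ref{rankMinusOneNest2}) translate precisely into the statements that $\mu(\zeta)\ge\mu_\opt(A)$ when $A$ is a twisted rank $1$ object, $\Delta(\zeta)\ge\Delta_\opt(A)$ when $A$ is a twisted rank $0$ object, $\Delta(\zeta)\ge\Delta_\opt(B)$ when $B$ is a rank $0$ object, and $\mu(\zeta)\le\mu_\opt(B)$ when $B$ is a rank $-1$ object (the trivial cases being covered by Theorem \ref{prioritaryThm}(4)). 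Hence, by the inductive hypothesis, a general member of $\cP(\zeta)$ has $E\te A$ acyclic, and a general member has $E\te B$ acyclic; since $\cP(\zeta)$ is irreducible (Theorem \ref{prioritaryThm}(1)) and acyclicity is open, a single general $E\in\cP(\zeta)$ has both properties, and then applying $R\Gamma$ to the triangle $A\to F\to B\to\cdot$ shows $E\te F$ is acyclic. This settles acyclicity for the class $\zeta_\opt(F)$; the remaining classes in the allowed range are reached by the bumping results of \S\ref{sec-interpolation} --- Theorem \ref{bumpingUpThm} to raise the slope in the rank $1$ case, Theorem \ref{rank0bumpUp} to raise the discriminant in the rank $0$ case (legitimate since a rank $0$ monomial object is a semistable pure $1$-dimensional sheaf by Theorem \ref{thm-Gieseker}) --- together with a direct analysis of the two-term complex $E\te F^\bullet$, whose acyclicity reduces to cohomology vanishings for the line bundle $E(-k-i)$ and a $\Hom$-vanishing, in the rank $-1$ case.

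For the necessity half it remains to show the threshold is exactly $\mu_\opt(F)$ (resp.\ $\Delta_\opt(F)$). The numerical reason behind the threshold is a Riemann--Roch identity: if $\chi(E^*,\cI_Z)=0$ and $\cI_{W_k}(-k)$ is the destabilizing subobject realizing $\mu_\opt(\cI_Z)=\mu_k(\cI_Z)$, then $\chi(E^*,\cI_{W_k}(-k))=r(E)\,k\,\bigl(\mu_k(\cI_Z)-\mu(E)\bigr)$, which is strictly positive once $\mu(E)<\mu_\opt(\cI_Z)$; there are analogous identities for the other two ranks, with the discriminant replacing the slope for rank $0$. To convert this into a genuine obstruction I would follow the method of \S\ref{sec-ci}: construct a curve $\alpha\subset\PP^{2[n]}$ through $[Z]$ with $\alpha\cdot\bigl(\mu_\opt(\cI_Z)H-\frac{1}{2}B\bigr)=0$, built from a one-parameter family of schemes all carrying the destabilizing data of $Z$ --- obtained by moving the monomial scheme $W_k$ along the corresponding curve furnished by the induction and re-extending by $\cI_{Z_k\subset kL}$, the base case $W_k=\emptyset$ being realized by moving the fat line $kL$ --- and compute $\alpha\cdot H$ and $\alpha\cdot B$ directly from the family. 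Then for any $\mu'<\mu_\opt(\cI_Z)$, every effective divisor numerically proportional to $\mu'H-\frac{1}{2}B$ meets $\alpha$ negatively, hence contains $\alpha$ and in particular $[Z]$; so $Z$ lies in the stable base locus of $|\mu'H-\frac{1}{2}B|$ and, by the correspondence of \S\ref{sec-correspondence}, no bundle of slope $\mu'$ has interpolation for $Z$. The rank $0$ and rank $-1$ statements follow by the same mechanism on the relevant moduli space.

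The main obstacle is this necessity direction. The $\Hom$ side of the obstruction is immediate from the displayed Euler-characteristic identity --- a nonzero section of $E\te\cI_{W_k}(-k)$, pushed to $\cI_Z$, would violate acyclicity --- but ruling out acyclicity uniformly requires the explicit moving-curve construction and the verification that it has exactly the predicted numerical behaviour, which is where the purity hypotheses built into the monomial objects, and the inequalities of \S\ref{sec-invariants} and \S\ref{sec-nesting}, are used one last time.
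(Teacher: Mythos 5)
Your overall architecture matches the paper's: induction over the destabilizing decomposition, acyclicity at $\zeta_\opt(F)$ via the nesting lemmas applied to $A$ and $B$ plus the triangle, Theorems \ref{bumpingUpThm} and \ref{rank0bumpUp} to sweep out the rest of the range in ranks $1$ and $0$, and a moving curve of extensions $0\to A(-j)\to \cI_{Z'}\to B\to 0$ for the rank $1$ necessity (your Hom-obstruction remark is in fact essentially the paper's rank $0$ necessity argument: semistability of $E$ via Proposition \ref{rank0stable}, $\chi(A(-i)\te E)>0$, $H^2$-vanishing, hence $h^0(F\te E)\neq 0$).

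The genuine gap is the rank $-1$ case. The induction really does need statement (3) for the whole range $\mu\leq\mu_\opt(F^\bullet)$, because Lemmas \ref{rankZeroNest2} and \ref{rankMinusOneNest2} only give $\mu_\opt(G^\bullet)\geq \mu_\opt(F)$ for the rank $-1$ quotient $G^\bullet$, and there is no bumping theorem in the decreasing-slope direction. Your proposed substitute --- that acyclicity of $E\te F^\bullet$ ``reduces to cohomology vanishings for the line bundle $E(-k-i)$ and a $\Hom$-vanishing'' --- does not work as stated. From the triangle $\OO_{\P^2}(-k-i)[1]\to F^\bullet\to \cI_{Z\subset kL\cap iL'}$ one gets $\chi(E(-k-i)) = r(ki-n)>0$, so $E(-k-i)$ is never acyclic, and acyclicity of $E\te F^\bullet$ forces the connecting map $H^0(E\te \cI_{Z\subset kL\cap iL'})\to H^2(E(-k-i))$ to be an isomorphism; this is not a vanishing statement and is not obviously achieved by a general prioritary $E$. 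The paper's way around this is a structural result you are missing: the derived dual of a rank $-1$ monomial object is, up to twist and shift, the ideal sheaf $\cI_{Z'}$ of the monomial scheme whose block diagram is the $180^\circ$-rotated complement of $D_Z$ in the $k\times i$ rectangle (read off from transposing the truncated matrix of the minimal free resolution). Serre duality then makes the rank $-1$ interpolation problem literally equivalent to the rank $1$ problem for $\cI_{Z'}$, with $\mu_\opt(F^\bullet)=-\mu_\opt(\cI_{Z'})+i+k-3$, which handles both the full acyclicity range and the necessity in rank $-1$ (your ``same mechanism on the relevant moduli space'' for ranks $0$ and $-1$ is otherwise unsubstantiated). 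Without this duality step, or a genuine replacement for it, the induction does not close.
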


The hard part of the theorem is to show that acyclicity holds.  To establish acyclicity, let $\zeta_\opt(F) = (r, \mu_\opt(F),\Delta_\opt(F))$ where $r$ is sufficiently large and divisible, and suppose $F$ is destabilized as $$0\to A(-j) \to F \to B \to 0,$$  where $j$ is chosen so that $A$ is a monomial object.  By induction on the complexity of a monomial object, we assume the theorem is known for $A$ and $B$ if they are nontrivial.  

For each type of monomial object, we must first show that $E\te F$ is acyclic for a general $E\in \cP(\zeta_\opt(F))$.  

\begin{lemma}
If $F$ is a rank $1$ monomial object, then $E \te F$ is acyclic for a general $E\in \cP(\zeta_\opt(F))$.
\end{lemma}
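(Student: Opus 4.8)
Here is a plan for proving the lemma.

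The plan is to tensor the conjectured destabilizing sequence of $F=\cI_Z$ with $E$ and reduce the acyclicity of $E\te F$ to that of the two outer terms. After exchanging $x$ and $y$ if necessary, the destabilizing sequence reads
$$0\to \cI_{W_k}(-k)\to \cI_Z\to \cI_{Z_k\subset kL}\to 0,$$
where $\cI_{W_k}$ is a (possibly trivial) rank $1$ monomial object and, by Proposition \ref{rank1quotientPure}, $\cI_{Z_k\subset kL}$ is a rank $0$ monomial object; by the induction scheme of \S\ref{sec-nesting} we may assume Theorem \ref{mainThm} for each of these when it is nontrivial. Since $E$ is locally free, tensoring preserves exactness, so we get
$$0\to E\te\cI_{W_k}(-k)\to E\te\cI_Z\to E\te\cI_{Z_k\subset kL}\to 0.$$
By construction $\zeta_\opt(F)$ is orthogonal to every term of the destabilizing sequence (it is the class realizing the largest wall in \S\ref{rank1invs}), and since $\chi(E\te G)=\chi(E^\ast,G)=\chi(\zeta_\opt(F)^\ast,G)$ for $E$ locally free, all three sheaves above have vanishing Euler characteristic. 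Because $\cP(\zeta_\opt(F))$ is irreducible (Theorem \ref{prioritaryThm}(1)), it therefore suffices to show that the two outer terms are acyclic for a general $E\in\cP(\zeta_\opt(F))$: the long exact cohomology sequence then forces $E\te F$ to be acyclic.

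For the subobject, first suppose $\cI_{W_k}$ is trivial, which happens exactly when $k=r(D)$. Then $E\te\cI_{W_k}(-k)=E(-k)$ is a twist of $E$, hence a general prioritary bundle in its own moduli stack, of rank $r\geq 2$ with $\chi(E(-k))=0$; by Theorem \ref{prioritaryThm}(4) it is acyclic. If $\cI_{W_k}$ is nontrivial, we invoke the inductive hypothesis, Theorem \ref{mainThm}(1) for $\cI_{W_k}$. By Lemma \ref{rankOneNest1} we have $\mu(E(-k))=\mu_\opt(F)-k\geq\mu_\opt(\cI_{W_k})=\mu_{\min}^\perp(\cI_{W_k})$, the Chern character of $E(-k)$ is $\zeta_\opt(F)(-k)$, and this character is orthogonal to $\cI_{W_k}$ since $\zeta_\opt(F)$ is orthogonal to the term $\cI_{W_k}(-k)$; hence $E(-k)\te\cI_{W_k}=E\te\cI_{W_k}(-k)$ is acyclic for general $E\in\cP(\zeta_\opt(F))$. (Concretely, this is the minimal-slope acyclic bundle for $\cI_{W_k}$ produced by induction, with its slope raised to $\mu_\opt(F)-k$ using Theorem \ref{bumpingUpThm}.)

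For the quotient $B:=\cI_{Z_k\subset kL}$, recall it is a rank $0$ monomial object, hence a semistable pure $1$-dimensional sheaf by Theorem \ref{thm-Gieseker}. By Lemma \ref{rankOneNest2}, $\mu_\opt(B)=\mu_\opt(F)$ and $\Delta_\opt(B)\leq\Delta_\opt(F)$. Thus $E$ has slope $\mu_\opt(B)$ (the unique slope forced by $\chi(E\te B)=0$) and discriminant $\Delta_\opt(F)\geq\Delta_\opt(B)=\Delta_{\min}^\perp(B)$, so the inductive hypothesis Theorem \ref{mainThm}(2) applied to $B$ shows $E\te B$ is acyclic for general $E\in\cP(\zeta_\opt(F))$. (Equivalently, start from the minimal-discriminant acyclic bundle for $B$ and raise its discriminant to $\Delta_\opt(F)$ via Theorem \ref{rank0bumpUp}.) Intersecting the two dense open conditions on the irreducible stack $\cP(\zeta_\opt(F))$, a general $E$ makes both outer terms acyclic, so $E\te F$ is acyclic.

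The only real obstacle, and the reason the groundwork of \S\S\ref{sec-invariants}--\ref{sec-nesting} is needed, is matching numerical invariants exactly: the discriminant $\Delta_\opt(F)$ that $\chi(E\te F)=0$ forces on $E$ must simultaneously be at least $\Delta_{\min}^\perp$ of the rank $0$ quotient and, after twisting by $\OO_{\P^2}(-k)$, be the $\chi$-orthogonal discriminant for the rank $1$ subobject at a slope no smaller than $\mu_{\min}^\perp(\cI_{W_k})$. These are precisely the nesting inequalities of Lemmas \ref{rankOneNest1} and \ref{rankOneNest2}; the remaining points---keeping $r$ divisible enough for the appeals to Theorems \ref{prioritaryThm}, \ref{mainThm}, \ref{bumpingUpThm} and \ref{rank0bumpUp}, and using irreducibility of $\cP(\zeta_\opt(F))$ to choose $E$ generic for all conditions at once---are routine bookkeeping.
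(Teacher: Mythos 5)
Your proposal is correct and follows essentially the same route as the paper's proof: split off the trivial-subobject case via Theorem \ref{prioritaryThm}, handle the nontrivial subobject by Lemma \ref{rankOneNest1} together with the inductive hypothesis for rank $1$ objects, handle the quotient by Lemma \ref{rankOneNest2} together with the inductive hypothesis for rank $0$ objects, and pass to a common rank multiple so both generic conditions hold simultaneously. The extra bookkeeping you spell out (orthogonality of $\zeta_\opt(F)$ to all three terms, exactness of tensoring by the locally free $E$, irreducibility of $\cP(\zeta_\opt(F))$) is implicit in the paper's terser argument.
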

\begin{proof}
If $A = \OO_{\P^2}$ is trivial, then $E(-j)$ is acyclic by Theorem \ref{prioritaryThm}.  
 
Suppose $A$ is nontrivial.  By Lemma \ref{rankOneNest1}, we have $$\mu_\opt(A)+j \leq \mu_\opt(F),$$ so $\mu(E(-j)) \geq \mu_\opt(A)$.  As $\chi(E(-j)\te A)=0$, we find by induction that $E\te A(-j)$ is acyclic if $E$ is general and its rank is sufficiently large and divisible.

Similarly, by Lemma \ref{rankOneNest2} we have $$\Delta_\opt(B) \leq \Delta_\opt(F).$$ We have $\chi(E\te B) = 0$, so again by induction $E\te B$ is acyclic for general $E$ with sufficiently large and divisible rank.

Increasing the rank to a common multiple if necessary, we conclude $E \te F$ is acyclic for a general $E \in \cP(\zeta_\opt(F))$.  
\end{proof}

The same statement is easily proved in the rank $0$ case using the results of \S \ref{ss-rank0nesting}.  We omit the proof, which follows the same formal framework.

In the rank $1$ case, it follows that $\mu_{\min}^\perp(F) \leq \mu_\opt(F)$. Likewise, in the rank $0$ case, we have $\Delta_{\min}^\perp(F) \leq \Delta_\opt(F)$.  However, in the rank $-1$ case we do not have an analogue of Theorems \ref{bumpingUpThm} or \ref{rank0bumpUp}, so a separate argument must be given to prove acyclicity.  Due to the next result and Serre duality, the rank $1$ and rank $-1$ interpolation problems are in fact equivalent.

\begin{proposition}
The derived dual of a rank $-1$ monomial object is, up to twists and shifts, a rank $1$ monomial object.  
\end{proposition}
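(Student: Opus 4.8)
The plan is to compute the derived dual $R\sHom(F^\bullet,\OO_{\P^2})$ by hand, after first replacing the rank $-1$ monomial object by a quasi-isomorphic two-term complex of line bundles. Write $F^\bullet=\bigl(\OO_{\P^2}(-k)\oplus\OO_{\P^2}(-i)\xrightarrow{(y^k,x^i)}\cI_Z\bigr)$ in degrees $-1,0$, and write $\cI_Z=(x^{a_1},x^{a_2}y^{b_2},\dots,y^{b_r})$ as in Section \ref{sec-prelim}. Since $x^i$ and $y^k$ are the minimal powers of $x$ and $y$ in $\cI_Z$ we have $i=a_1$ and $k=b_r$, and the two generators $x^{a_1}=x^{a_1}y^{b_1}$ and $y^{b_r}=x^{a_r}y^{b_r}$ are the first and last of the chosen generators of $\cI_Z$. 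Hence the defining map of $F^\bullet$ factors as $\OO_{\P^2}(-k)\oplus\OO_{\P^2}(-i)\hookrightarrow P_0\to\cI_Z$, where $0\to P_1\xrightarrow{M}P_0\to\cI_Z\to 0$ is the minimal free resolution of Section \ref{sec-prelim} and the first arrow is the inclusion of the $r$-th and $1$st summands of $P_0$. Thus $F^\bullet$ is quasi-isomorphic to the mapping cone of this lift, and quotienting by the acyclic subcomplex formed by those two summands of $P_0$ together with their preimages yields
$$F^\bullet\ \simeq\ \bigl(P_1\xrightarrow{M'}\bar P_0\bigr)\qquad\text{in degrees }-1,0,$$
where $\bar P_0=\bigoplus_{j=2}^{r-1}\OO_{\P^2}(-a_j-b_j)$ and $M'$ is $M$ with its first and last rows deleted. (This is a clean quotient by an acyclic subcomplex, so no Gaussian-elimination correction term intervenes.) In particular $F^\bullet$ is quasi-isomorphic to a two-term complex of locally free sheaves.

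Since the terms are locally free, $R\sHom(F^\bullet,\OO_{\P^2})$ is the termwise dual $\bigl(\bar P_0^\vee\xrightarrow{(M')^T}P_1^\vee\bigr)$ in degrees $0,1$. I would then shift by $[1]$ and twist by $\OO_{\P^2}(-a_1-b_r)$, obtaining a two-term complex of line bundles in degrees $-1,0$ of the form
$$\bigoplus_{j=2}^{r-1}\OO_{\P^2}\bigl(-(a_1-a_j)-(b_r-b_j)\bigr)\ \xrightarrow{(M')^T}\ \bigoplus_{j=1}^{r-1}\OO_{\P^2}\bigl(-(a_1-a_j)-(b_r-b_{j+1})\bigr),$$
and recognize it. The $r-1$ monomials $x^{a_1-a_j}y^{b_r-b_{j+1}}$, $1\le j\le r-1$, have strictly increasing $x$-exponent and strictly decreasing $y$-exponent, so they are the minimal generators of a monomial ideal $\cI_W$; combinatorially $D_W$ is the complement of $D_Z$ inside the $a_1\times b_r$ rectangle rotated by $180^\circ$, and $\deg W=a_1b_r-n$. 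Matching the displayed complex against the explicit resolution formula of Section \ref{sec-prelim}—the syzygy summand $\OO_{\P^2}(-(a_1-a_j)-(b_r-b_j))$ maps to the target summands $j$ and $j-1$ by $y^{(b_r-b_j)-(b_r-b_{j+1})}=y^{b_{j+1}-b_j}$ and $-x^{(a_1-a_j)-(a_1-a_{j-1})}=-x^{a_{j-1}-a_j}$—identifies it, up to harmless sign changes on rows and columns and a relabelling of indices, with the minimal free resolution of $\cI_W$. Therefore $R\sHom(F^\bullet,\OO_{\P^2})[1](-a_1-b_r)\simeq\cI_W$, i.e. $R\sHom(F^\bullet,\OO_{\P^2})\simeq\cI_W(a_1+b_r)[-1]$. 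Since $F^\bullet$ is nontrivial we have $n<a_1b_r$, so $W$ is a nonempty monomial scheme and $\cI_W$ is a genuine rank $1$ monomial object, as claimed.

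As a consistency check one verifies via Riemann–Roch that $\ch\bigl(\cI_W(a_1+b_r)[-1]\bigr)$ equals the derived dual of $\ch(F^\bullet)=(-1,a_1+b_r,-\tfrac12(a_1^2+b_r^2)-n)$, which already forces $\deg W=a_1b_r-n\ge 0$ with equality exactly in the trivial case. The only genuinely delicate point is the bookkeeping in the second paragraph: tracking the bidiagonal pattern of $x$- and $y$-exponents and the signs through the reindexing $j\mapsto(\text{generator of }W)$, and confirming that deleting the first and last rows of $M$ in the cancellation step is exactly the passage to the quotient by an acyclic subcomplex. One could alternatively proceed more conceptually via Grothendieck–Serre duality along the complete intersection curve $kL\cap iL'$, whose dualizing sheaf is $\OO_{\P^2}(a_1+b_r-3)|_{kL\cap iL'}$, applied to the triangle $\OO_{\P^2}(-a_1-b_r)[1]\to F^\bullet\to\cI_{Z\subset kL\cap iL'}\to\cdot$, but the explicit resolution computation stays within the paper's toolkit and is more transparent to verify.
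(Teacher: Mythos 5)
Your proof is correct and follows essentially the same route as the paper: cancel the $\OO_{\P^2}(-i)$ and $\OO_{\P^2}(-k)$ summands against the minimal free resolution to present $F^\bullet$ as a two-term complex of line bundles, dualize termwise, and recognize the result (after twisting by $\OO_{\P^2}(i+k)$ and shifting) as presenting a monomial ideal sheaf. The only cosmetic difference is that you identify the dual scheme by reading off its minimal generators $x^{a_1-a_j}y^{b_r-b_{j+1}}$ and matching the bidiagonal matrix directly, whereas the paper invokes the $(r-2)\times(r-2)$ minors of the transposed matrix and defers the rotated-complement description to the subsequent remark.
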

\begin{proof}
Consider the rank $-1$ monomial object $$F^\bullet := (\OO_{\P^2}(-k) \oplus \OO_{\P^2}(-i)\to \cI_Z),$$ and look at the generators $x^{a_1},x^{a_2}y^{b_2},\ldots,y^{b_r}$ of the ideal $\cI_Z$.  By definition, we have $a_1 = i$ and $b_r=k$.  Then the minimal free resolution of $\cI_Z$ looks like $$0\to \bigoplus_{j=1}^{r-1}\OO(-a_j-b_{j+1}) \to \OO(-i)\oplus \bigoplus_{j=2}^{r-1} \OO(-a_j-b_j) \oplus \OO(-k)\to \cI_Z\to 0.$$ Then $F^\bullet$ is quasi-isomorphic to the complex   $$\bigoplus_{j=1}^{r-1}\OO(-a_j-b_{j+1}) \to \bigoplus_{j=2}^{r-1} \OO(-a_j-b_j)$$ of locally free sheaves, sitting in degrees $-1$ and $0$.  The derived dual is the complex $$G^\bullet: = \left(\bigoplus_{j=2}^{r-1} \OO(a_j+b_j)\to \bigoplus_{j=1}^{r-1}\OO(a_j+b_{j+1})\right)$$ sitting in degrees $0$ and $1$.  The matrix in the minimal free resolution of $\cI_Z$ is $$\left( \begin{array}{cccc} y^{b_2-b_1} \\ -x^{a_1-a_2} & y^{b_3-b_2}\\ & -x^{a_2-a_3} & \ddots \\ &&& y^{b_r-b_{r-1}}\\ &&& -x^{a_{r-1}-a_r} \end{array}\right),$$
while the matrix in $G^\bullet$ is obtained by deleting the first and last rows of this matrix and taking the transpose.  If $Z'$ is the monomial scheme cut by the $(r-2)\times (r-2)$ minors of the matrix in $G^\bullet$, then $G^\bullet$ is quasi-isomorphic to $\cI_{Z'}(i+k)[-1]$.
\end{proof}

\begin{remark}
Preserve the notation from the proof.  There is a simple description of the scheme $Z'$ in terms of block diagrams.  Draw the block diagram of $Z$ as a subset of the rectangle with $k$ rows and $i$ columns.  Take the complement of the block diagram of $Z$ in this rectangle, and rotate the resulting configuration of blocks by $180^\circ$.  The corresponding block diagram is the block diagram of $Z'$ (see Figure \ref{fig-rotate}).  Using this interpretation of the dual, it is easy to check $$\mu_\opt(F^\bullet)= -\mu_\opt(\cI_{Z'})+i+k-3,$$ so the invariant $\mu_\opt$ transforms appropriately under Serre duality.
\end{remark}

 \begin{figure}[htbp]
\begin{center}
\input{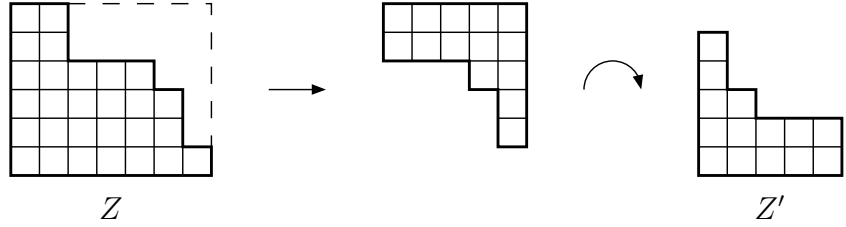}
\end{center}
\caption{The block diagram interpretation of the dual of a rank $-1$ monomial object.}
\label{fig-rotate}
\end{figure}

We have now established acyclicity in every case where it is supposed to hold.  We must still show acyclicity cannot hold in any other case.  By duality, it suffices to consider the rank $1$ and rank $0$ cases. 
       
\subsection{Dual curves for rank $1$ monomial objects}  Let $\cI_Z$ be a rank $1$ monomial object.  We have seen that a general $E\in \cP(\zeta_\opt(\cI_Z))$ has $E\te \cI_Z$ acyclic.  Correspondingly, there is a divisor $$D_E = c_1(E) H - \frac{r(E)}{2} B\sim \mu(E)H-\frac{1}{2}B$$ on $\P^{2[n]}$ such that $Z$ is not in the stable base locus of $D_E$.  Denote by $S^- \subset \P^{2[n]}$ the stable base locus of a divisor class $(\mu(E)-\epsilon)H-\frac{1}{2}B$ with $\epsilon>0$ sufficiently small.  If $\alpha$ is a curve on $\P^{2[n]}$ with $\alpha \cdot D_E = 0$, then $\alpha \subset S^{-}$.  To show that $\mu_{\min}^\perp(\cI_Z) = \mu_{\opt}(\cI_Z)$, it suffices to show that $Z\in S^-$.  

Suppose $\cI_Z$ is destabilized horizontally by a sequence $$0\to A(-j)\to \cI_Z \to B\to 0,$$ where $j$ is chosen so that $A$ is a rank $0$ monomial object.  By looking at other extensions $$0\to A(-j)\to \cI_{Z'} \to B\to 0,$$ (where $Z' \in \P^{2[n]}$ is not necessarily monomial) we can obtain a curve $\alpha$ in $\P^{2[n]}$ which passes through $Z$ and has $\alpha \cdot D_E=0$.  Note that since $E\te A(-j)$ and $E\te B$ are acyclic, any such ideal sheaf automatically has $E\te \cI_{Z'}$ acyclic, and thus $Z'$ is not in the stable base locus of $D_E$.  We must check that there is a complete curve of such extensions.

To do this, consider the block diagram $D_Z$ of $Z$.  Since $\mu_j(\cI_Z)\geq \mu_{j+1}(\cI_Z)$, the $(j+1)$st row of $D_Z$ must be shorter than the $j$th row of $D_Z$.  Then the block diagram of $Z_j$ has more ``full'' columns of length $j$ than the block diagram of $W_j$ has columns.  It follows that there is a monomial scheme $Z'$ whose block diagram is obtained from the block diagram of $Z$ by removing one box from each of the first $j$ rows.  For any $p = [u:v]\in L = \P^1$, let $Y_p \subset \P^2$ be the scheme given by the complete intersection $$Y_p = jL \cap \{v x - u z = 0\}.$$ For all $p\neq [0:1]$, the scheme $Z'\cup Y_p$ has degree $n$, and these schemes form a flat family over $L$ whose limit over $p = [0:1]$ is $Z$.  Note that, as $p$ varies, the ideal sheaf $\cI_{(Z'\cup Y_p) \cap jL \subset jL}$ has constant isomorphism type since $\cI_{Y_p\subset jL} = \OO_{jL}(-1)$.  Thus the ideal sheaf of each scheme $Z'\cup Y_p$ can be realized as an extension of $B$ by $A(-j)$, and we conclude that these schemes all lie in $S^{-}$.

\subsection{Optimality of interpolation for rank $0$ monomial objects}  Let $F = \cI_{Z\subset kL}$ be a rank $0$ monomial object.  Rather than produce a dual curve we will content ourselves with showing no bundle $E$ with $\Delta(E) < \Delta_\opt(F)$ satisfies interpolation.  Note that a similar argument can be given in the rank $1$ case as well.
 
Consider the destabilizing sequence $$0\to A(-i) \to F \to B^\bullet \to 0,$$ where $A$ is a rank $1$ monomial object and $B^\bullet$ is a rank $-1$ monomial object.  Suppose $E$ has $E\te F$ acyclic and $\Delta(E) < \Delta_\opt(F)$.  By Proposition \ref{rank0stable}, $E$ is semistable.  We have $\chi(F\te E) = 0$ and $\chi(A(-i) \te E)  >0$ since the point $(\mu(E),\Delta(E))$ lies below the parabola $\chi(A(-i)\te E) = 0$ in the $(\mu,\Delta)$-plane.  If the derived dual of $B^\bullet$ is written as $\cI_{Z'}(i+k)[-1]$, then by Serre duality $$R^{-1}\Gamma(B^\bullet \te^L E) = H^2(E^* \te \cI_{Z'}(i+k-3))^*.$$ This space is easily seen to vanish by the semistability of $E$.  Likewise, $H^2(A(-i)\te E)=0$, and from $\chi(A(-i)\te E)>0$ we deduce $h^0(A(-i)\te E)>0$.  But then $h^0(F\te E)\neq 0$, contradicting the acyclicity of $F\te E$.

\bibliographystyle{plain}

\end{document}